\documentclass[a4paper, 
headsepline=off, DIV=12, titlepage=false]{scrartcl}

\title{\vspace{-1em}On universal norms for $\bm{p}$-adic representations in higher-rank Iwasawa theory}
\date{}
\author{Dominik Bullach \and Alexandre Daoud\vspace{-2em}}
 
 \usepackage[backend=bibtex
 , style=alphabetic, sorting=nyt, maxbibnames=9
 ]{biblatex}
		\addbibresource{literature}
		
\renewbibmacro{in:}{%
  \ifentrytype{article}{}{\printtext{\bibstring{in}\intitlepunct}}}
  \DeclareFieldFormat{journaltitle}{#1\isdot}
  \DeclareFieldFormat[article, inproceedings, unpublished]{title}{\textit{#1}}

	\usepackage{amsmath}	
	\usepackage{amssymb}
	\usepackage{amsthm}
	\usepackage[british]{babel}
	\usepackage{comment}
	\usepackage{datetime}
	\usepackage{enumitem}
	\usepackage[inner=30mm, outer=30mm, 
	bottom=28mm,
				a4paper]{geometry}
	\usepackage[framemethod=TikZ, roundcorner=1mm, linecolor=gray]{mdframed}
	\usepackage{HanFakt} 
	\usepackage{tikz-cd}
	\usepackage{url}
	\usepackage{stmaryrd}
	\usepackage{xcolor}
	\usepackage{mathrsfs}
	\usepackage{mathtools}
	\usepackage{todonotes}
	\usepackage{bm}

\usepackage{tikz-cd, tikz-3dplot, pgfplots}
\usetikzlibrary{patterns, arrows, positioning, decorations.markings}

	\tikzset{graph/.style={grau}}
\tikzcdset{
  cells={font=\everymath\expandafter{\the\everymath\displaystyle}},
}

\usepackage[automark, autooneside=false]{scrlayer-scrpage}

\newpairofpagestyles[scrheadings]{default}{
\automark[subsection]{section}
\clearpairofpagestyles
\chead{\leftmark : \rightmark}
\cfoot*{\pagemark}
}

\newpairofpagestyles[scrheadings]{special}{
\automark[subsection]{section}
\clearpairofpagestyles
\chead{\leftmark}
\cfoot*{\pagemark}
}


  


\newcommand{\nocontentsline}[3]{}
\newcommand{\tocless}[2]{\bgroup\let\addcontentsline=\nocontentsline#1{#2}\egroup}

	\linespread{1.05} 

	\renewmdenv[%
   	 backgroundcolor=gray!25,
   	 linecolor=white,
    	outerlinewidth=1pt,
    	roundcorner=3mm,
	]{boxed}	

	\swapnumbers
	\theoremstyle{definition}
	\newtheorem*{thmplain}{Theorem}
	\newtheorem*{propplain}{Proposition}
	\newtheorem{thm}{Theorem}[section]
	
	\newtheorem{prop}[thm]{Proposition}
	\newtheorem{cor}[thm]{Corollary}
	\newtheorem{lem}[thm]{Lemma}
	
	\newtheorem{bsp1}[thm]{Example}
	\newtheorem{bspe1}[thm]{Examples}
	\newtheorem{definition}[thm]{Definition}
	\newtheorem{rk}[thm]{Remark}

	\newtheorem{lemma}[thm]{Lemma}
	\newtheorem{corollary}[thm]{Corollary}
	\newtheorem{proposition}[thm]{Proposition}
	\newtheorem{conjecture}[thm]{Conjecture}

	\newtheorem{examples}[thm]{Examples}
	\newtheorem{remark}[thm]{Remark}
	
	\newtheorem*{acknowledgments}{Acknowledgements}
	\newtheorem{hypothesis}[thm]{Hypothesis}
	\newtheorem{hypotheses}[thm]{Hypotheses}


	 \newenvironment{proofbox}
	  {
	  \begin{proof}}
	 { \end{proof} 
	 \medskip}

	\newenvironment{liste}
	{\begin{enumerate}[label=(\alph*)]}
	{\end{enumerate}}

	\newenvironment{cdiagram}
	{\begin{center} \begin{tikzcd}}
	{\end{tikzcd} \end{center}}

	\renewcommand{\:}{\colon}

	\newcommand{\bigO}{\mathcal{O}}

	\newcommand{\cl}{\operatorname{cl}}
	\newcommand{\C}{\mathbb{C}}
	\renewcommand{\emptyset}{\varnothing}
	\renewcommand{\emph}[1]{\textit{\textbf{#1}}}
	\newcommand{\Fitt}{\operatorname{Fitt}}
	\newcommand{\Frob}{\mathrm{Frob}}
	\newcommand{\gal}[2]{\text{Gal}( #1 | #2)}
	\renewcommand{\iff}{\quad \Leftrightarrow \quad}

	 \newcommand{\N}{\mathbb{N}}
	
	\newcommand{\p}{\mathfrak{p}}

	\newcommand{\q}{\mathfrak{q}}
	\newcommand{\Q}{\mathbb{Q}}

	\newcommand{\R}{\mathbb{R}}

	\newcommand{\UN}{\mathrm{UN}}
	\newcommand{\Z}{\mathbb{Z}}

	\renewcommand{\thethm}{(\arabic{section}.\arabic{thm})}

\usepackage[colorlinks,
pdfpagelabels,
pdfstartview = FitH,
bookmarksopen = true,
bookmarksnumbered = true,
linkcolor = black,
plainpages = false,
hypertexnames = false,
citecolor = black, urlcolor=black]{hyperref}



  1


\DeclareMathOperator{\Det}{Det}
\DeclareMathOperator{\Ext}{Ext}
\DeclareMathOperator{\Tor}{Tor}
\DeclareMathOperator{\Gal}{Gal}

\DeclareMathOperator{\Hom}{Hom}

\DeclareMathOperator{\Spec}{Spec}

\DeclareMathOperator{\coker}{coker}
\DeclareMathOperator{\res}{res}
\DeclareMathOperator{\im}{im}

\newcommand{\NN}{\mathbb{N}}
\newcommand{\QQ}{\mathbb{Q}}

\newcommand{\ZZ}{\mathbb{Z}}

\newcommand{\cQ}{\mathcal{Q}}

\newcommand{\cG}{\mathcal{G}}
\newcommand{\cK}{\mathcal{K}}

\newcommand{\cO}{\mathcal{O}}

\newcommand{\fp}{\mathfrak{p}}

\newcommand{\cone}{\mathrm{cone}}
\newcommand{\et}{\text{\'et}}

\newcommand{\Ann}{\mathrm{Ann}}

\def\bigcapp{\raise1ex\hbox{\rotatebox{180}{$\biguplus$}}}
 \def\bigcappd{\raise1ex\hbox{\rotatebox{180}{$\displaystyle\biguplus$}}}

 \newcommand{\tor}{\mathrm{tor}}
 \newcommand{\tf}{\mathrm{tf}}

 \newcommand{\cR}{\mathcal{R}}
 \newcommand{\ram}{\mathrm{ram}}
 \newcommand{\cores}{\mathrm{cores}}
 \newcommand{\bidual}{\bigcap\nolimits}
\newcommand{\exprod}{\bigwedge\nolimits}
\newcommand{\rank}{\mathrm{rk}}
\newcommand{\spc}{\mathrm{split}}
\newcommand{\NS}{\mathrm{NS}}
\newcommand{\Iw}{\mathrm{Iw}}
\newcommand{\Cyc}{\mathrm{Cyc}}
\newcommand{\cchar}{\mathrm{char}}

  \tikzset{
    rotated/.style={rotate=-90, anchor = south},
    rotatedswap/.style={rotate=-90, anchor=north, outer sep=0.75mm}
}

\newcommand{\bLambda}{{\mathpalette\makebLambda\relax}}
\newcommand{\makebLambda}[2]{%
  \raisebox{\depth}{\scalebox{1}[-1]{$\mathsurround=0pt#1\mathbb{V}$}}%
}

\usepackage{dsfont}
\renewcommand{\mathbb}{\mathds}

\setlength\parindent{0pt}
 
\begin{document}

\maketitle

\begin{abstract}
    We begin a systematic investigation of universal norms for $p$-adic representations in higher-rank Iwasawa theory. After establishing the basic properties of the module of higher-rank universal norms, that naturally extend those in the classical theory, we construct an Iwasawa-theoretic pairing that is relevant to this setting. This allows us, for example, to refine the classical Iwasawa Main Conjecture for cyclotomic fields, and also to give applications to various well-known conjectures in arithmetic concerning Iwasawa invariants and leading terms of $L$-functions. 
\end{abstract}

\let\thefootnote\relax\footnotetext{2020 {\em Mathematics Subject Classification.} Primary: 11F80, 11R23; Secondary: 11R33.}

\tableofcontents

\section{Introduction}
\pagestyle{special}

The investigation of the deep connection between $L$-functions and arithmetic is at the heart of modern number theory. By now we have a number of partial results on this matter, many of which due to celebrated results obtained via the Euler system method that has been developed by Thaine, Kolyvagin, Rubin and Mazur.\\
However, all of these (unconditional) results are restricted to cases where the order of vanishing of the $L$-function is at most one. Although a notion of \textit{higher-rank} Euler system was already established by Perrin-Riou more than 20 years ago, technical issues arising from the use of exterior powers 
hindered the theory surrounding higher rank Euler systems from being fully operational. These technical obstructions have only recently been overcome by Burns, Sakamoto and Sano in a series of articles (\cite{EulerSystemsSagaI},\ \cite{EulerSystemsSagaII},\ \cite{EulerSystemsSagaIII} and \cite{EulerSystemsSagaIV})\@. Key to their approach is the consistent use of \textit{exterior biduals} instead of exterior powers, a notion that is based on the lattice introduced by Rubin in \cite[\S 1.2]{Rubin96} and provides better functorial properties in many aspects.\\
Since Euler systems are, by their very definition, universal norms on $\ZZ_p$-extensions, we feel that the study of \textit{higher-rank universal norms} 
undertaken in this article naturally fits into the chain of developments described above. As in the aforementioned works, the use of exterior biduals allows us to develop a theory that naturally extends the classical theory of universal norms to both the higher-rank and equivariant settings. 

\paragraph{Overview of results} To explain our results in a little more detail, we first introduce some notation. Let $L | K$ be a finite abelian extension of number fields in which every archimedean place splits completely, $p$ an odd prime and 
take $L_\infty = \bigcup_{n \geq 0} L_n$ to be a $\Z_p$-extension of $L$ that is abelian over $K$ and in which no finite place of $K$ splits completely. Denote by 
$\Lambda = \Z_p \llbracket \gal{L_\infty}{L} \rrbracket$ and $\bLambda = \Z_p \llbracket \gal{L_\infty}{K} \rrbracket$ the relevant Iwasawa algebras. 
For a $p$-adic representation $T$ of $K$ we shall below define natural modules $\UN^r_n (T)$ and $\NS^r (T)$ of universal norms and norm-coherent sequences, respectively, of rank $r$ and level $n$ along the $\Z_p$-extension $L_\infty |L$. \\
Our first step in extending the classical theory of universal norms as established by, for example, Kuz'min \cite{Kuzmin} and Greither \cite{Greither} (see Remark \ref{structure-theorem-remark}\,(a) for more details on the existing literature) is then the following theorem.

\begin{thmplain}[Thm.\@ \ref{UN-structure-theorem}]
    Fix an integer $1 \leq r \leq r_T$ where $r_T$ denotes the \textit{basic rank} of $T$ (see \ref{main-hypothesis}\,(2)). Then, under certain mild conditions, the natural codescent map induces an isomorphism of $\Z_p [\gal{L_n}{K}]$-modules
    \begin{align*}
         \NS^r \otimes_{\bLambda} \Z_p [\gal{L_n}{K}] \cong \UN^r_n (T).
    \end{align*}
    Moreover, $\NS^r$ (resp.\@ $\UN^r_n$) is a free module of rank $[L:K]\cdot {r_T \choose r}$ over $\Lambda$ (resp.\@ $\Z_p [\gal{L_n}{L}]$).
\end{thmplain}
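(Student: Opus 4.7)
The plan is to reduce the theorem to the rank-one case via the exterior bidual formalism, and then to establish this rank-one case using Iwasawa cohomology and a perfect-complex argument. Concretely, one first identifies
\[
\UN^r_n(T) \;\cong\; \bigcap\nolimits^r_{\Z_p[\gal{L_n}{K}]} \UN^1_n(T),
\qquad
\NS^r(T) \;\cong\; \bigcap\nolimits^r_{\bLambda} \NS^1(T),
\]
either by the very definition of the higher-rank modules in terms of biduals of cohomology, or via a short verification that the norm maps on exterior biduals are compatible with the norms on the rank-one universal norms. The crucial formal property of the exterior bidual is that on a free $R$-module $M$ of rank $d$ one has $\bigcap\nolimits^r_R M = \bigwedge\nolimits^r_R M$, which is free of rank $\binom{d}{r}$, and that this operation commutes with base change along a surjection of commutative rings provided the source is free. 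Granted the rank-one case with $\bLambda$-freeness of $\NS^1(T)$ of rank $r_T$, the freeness of $\NS^r(T)$ over $\Lambda$ of rank $[L:K]\binom{r_T}{r}$ and the codescent isomorphism for rank $r$ then both follow at once.

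The crux is therefore the rank-one statement: that $\NS^1(T)$ is $\bLambda$-free of rank $r_T$ and that the codescent map $\NS^1(T)\otimes_{\bLambda}\Z_p[\gal{L_n}{K}] \to \UN^1_n(T)$ is an isomorphism. To attack this, I would work with the Iwasawa cohomology
\[
H^1_{\Iw} \;:=\; \varprojlim_n H^1(\mathcal{O}_{L_n, S}, T),
\]
realised as $H^1$ of a perfect complex of $\bLambda$-modules computing $R\Gamma(\mathcal{O}_{L_\infty, S}, T)$. The basic-rank hypothesis on $T$ should force the generic $\Lambda$-rank of this module to be $[L:K]\cdot r_T$, i.e.\ generic $\bLambda$-rank $r_T$. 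The assumption that every archimedean place splits completely in $L|K$ removes archimedean $H^0$ contributions, while the hypothesis that no finite place of $K$ splits completely in $L_\infty$ is precisely what is needed, via Poitou--Tate duality, to kill the dual $H^0$-type local terms that would otherwise obstruct freeness. The universal-norm submodule $\NS^1(T)$ of $H^1_{\Iw}$ is then to be identified with the maximal $\bLambda$-free piece that codescends cleanly, obtained by factoring out bounded torsion contributions to $H^1_{\Iw}$. The codescent isomorphism then follows from a Mittag--Leffler argument for the inverse system $\{\UN^1_m(T)\}_{m\geq n}$, whose surjective transition maps are again guaranteed by the non-splitting assumption at finite places.

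The main obstacle, as is already the case classically, is proving genuine $\bLambda$-freeness of $\NS^1(T)$ as opposed to mere reflexivity or torsion-freeness. This is the content of the theorems of Kuz'min and Greither in their respective settings, and the technical work consists of carefully tracking the local contributions at ramified primes and at primes above $p$ in the equivariant Iwasawa cohomology, and verifying that the basic-rank hypothesis combined with the non-splitting assumption is strong enough to force every torsion defect to lie outside the universal-norm submodule. A secondary but non-trivial technical point is the compatibility of the higher-rank biduals with inverse limits and with norm maps, which must be established before the reduction outlined in the first paragraph can be fully justified.
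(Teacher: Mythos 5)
Your proposed reduction to the rank-one case does not go through in the general equivariant setting, and the gap is fatal. First, the identification $\UN^r_n(T) \cong \bidual^r_{\cR[\cG_n]}\UN^1_n(T)$ that you take as a starting point is false in general: the paper records instead an exact sequence
\[
0 \longrightarrow \UN^r_n \longrightarrow \bidual^r_{\cR[\cG_n]}\UN^1_n \longrightarrow \Ext^1_\bLambda\Big(\exprod^r_\bLambda H^1_{\Sigma,\Iw}(\cO_{L,S},T)^\ast, \bLambda\Big)^{\Gamma^n} \longrightarrow 0
\]
whose right-hand term is a finite $p$-group that can be non-zero when $p \mid [L:K]$; this comparison is an \emph{output} of the theorem (part (c)), not a free input. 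Second, and more seriously, you presuppose that $\NS^1(T)$ is $\bLambda$-free of rank $r_T$. This is false in general: the theorem only asserts $\Lambda$-freeness (resp.\ $\cR[\Gamma_n]$-freeness) of $\NS^r$ (resp.\ $\UN^r_n$), and never $\bLambda$- or $\cR[\cG_n]$-freeness. Since $\NS^r = \bidual^r_\bLambda\NS^1$ is a bidual formed over $\bLambda$, mere $\Lambda$-freeness of $\NS^1$ gives no control over its $\bLambda$-module structure, so the passage $\bidual^r_\bLambda\NS^1 = \exprod^r_\bLambda\NS^1$ you need collapses. That identification requires $\bLambda$-projectivity of $\NS^1$, which one only has under the extra hypothesis $p\nmid [L:K]$ (cf.\ Remark \ref{structure-theorem-remark}\,(b)); the theorem is stated without this restriction.

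The paper avoids the reduction entirely. It realises $\NS^r$ as $H^0$ of the two-term complex $\exprod^r_\bLambda\Pi \xrightarrow{\psi} \Pi\otimes_\bLambda\exprod^{r-1}_\bLambda\Pi$ built from a quadratic standard representative of the $\Sigma$-modified cohomology complex, and then uses the degeneration of the spectral sequence for $-\otimes^{\mathbb{L}}_\bLambda\cR[\cG_n]$ along the length-one resolution $\bLambda\xrightarrow{1-\gamma_n}\bLambda\to\cR[\cG_n]$ to produce an injection $\NS^r\otimes_\bLambda\cR[\cG_n]\hookrightarrow\bidual^r_{\cR[\cG_n]}H^1_\Sigma(\cO_{L_n,S},T)$; this forces the $\Gamma$-coinvariants to be $\cR$-free and hence $\NS^r$ to be $\Lambda$-free \emph{directly at rank $r$}. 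Your ideas about the compactness/Mittag--Leffler argument for codescent and the generic rank computation via $Q(\bLambda)$ are in the right spirit and do appear in the paper, but they must be run against the rank-$r$ bidual itself, not fed through a rank-one reduction that is not available.
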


While this result shows that non-trivial higher rank universal norms exist, its proof is inherently non-constructive. We shall, however, give an elementary construction of a large $\bLambda$-submodule $\NS^b$ of $\NS^{r_T}$ that is of arithmetic significance as the following result shows.

\begin{thmplain}[Thm.\@ \ref{pairing-theorem}]
    There exists a free rank one $\bLambda$-submodule $\NS^b$ of $\NS^{r_T}$ together with a perfect pairing of $\bLambda$-modules
    \begin{align*}
        \faktor{\NS^{r_T}}{\NS^b} \times \faktor{\bLambda}{\Fitt_{\bLambda}(H^2_{\Sigma,\Iw}(\cO_{L,S}, T))^{**}} \to \faktor{Q(\bLambda)}{\bLambda},
    \end{align*}
    where $Q (\bLambda)$ denotes the total ring of quotients of $\bLambda$ and $H^2_{\Sigma,\Iw}(\cO_{L,S}, T)$ is a modified Iwasawa cohomology group. 
\end{thmplain}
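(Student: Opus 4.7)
The plan is to realise $\NS^{r_T}$ as a bidual of Iwasawa cohomology, then build $\NS^b$ from a perfect-complex representative, identify the resulting cokernel with the predicted Fitting quotient, and finally deduce the pairing by an elementary duality. For the first step, I would combine Theorem \ref{UN-structure-theorem} with the exterior-bidual formalism developed by Burns, Sakamoto and Sano to identify $\NS^{r_T}$ with the $r_T$-th exterior bidual $\bidual^{r_T}_{\bLambda} H^1_{\Sigma,\Iw}(\cO_{L,S}, T)$; the freeness assertion in Theorem \ref{UN-structure-theorem} then translates into the statement that $\NS^{r_T}$ is a reflexive $\bLambda$-module of generic rank one.

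To construct $\NS^b$, I would choose a perfect complex $C^{\bullet} = [C^1 \xrightarrow{d} C^2]$ of free $\bLambda$-modules concentrated in degrees one and two representing $R\Gamma_{\Sigma,\Iw}(\cO_{L,S}, T)$, arranged so that $\rank_{\bLambda} C^1 - \rank_{\bLambda} C^2 = r_T$. Taking the $(\rank_{\bLambda} C^2)$-th exterior power of $d$ then yields a canonical element $\eta^b \in \bidual^{r_T} C^1$, whose image in $\NS^{r_T}$ (under the identification of the first step) generates a free rank one $\bLambda$-submodule; this submodule will be taken as $\NS^b$.

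A Fitting-ideal calculation starting from the presentation $C^{\bullet}$ should identify the cokernel of $\NS^b \hookrightarrow \NS^{r_T}$ with $\bLambda/\Fitt_{\bLambda}(H^2_{\Sigma,\Iw}(\cO_{L,S}, T))^{**}$: the initial Fitting ideal of $H^2$ is generated by the maximal minors of $d$, and the passage to the top exterior bidual corresponds precisely to taking the reflexive closure of this ideal. Once a generator $a$ of the (principal, reflexive) ideal $\Fitt^{**}$ is fixed, the pairing
\[
\frac{\NS^{r_T}}{\NS^b} \times \frac{\bLambda}{\Fitt_{\bLambda}(H^2_{\Sigma,\Iw}(\cO_{L,S}, T))^{**}} \longrightarrow \frac{Q(\bLambda)}{\bLambda}, \qquad (x, \lambda) \longmapsto \frac{\lambda \cdot \tilde{x}}{a},
\]
where $\tilde{x} \in \bLambda$ is any lift of $x$ under the isomorphism just constructed, is well-defined and perfect; indeed, $a$ is a non-zero-divisor in $\bLambda$ and there is a canonical isomorphism $\bLambda/(a) \xrightarrow{\sim} \Hom_{\bLambda}(\bLambda/(a), Q(\bLambda)/\bLambda)$.

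The crux of the argument is this last Fitting-quotient identification: one must show that the cokernel equals $\bLambda/\Fitt^{**}(H^2)$ \emph{exactly}, not merely up to pseudo-isomorphism. This requires carefully tracking reflexive closures through the exterior-bidual functor and verifying independence from the choice of complex $C^{\bullet}$. It is the equivariant analogue of the Fitting-ideal computations in the Burns-Sakamoto-Sano framework and, I expect, where the bulk of the technical work will reside.
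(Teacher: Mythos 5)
Your construction of $\NS^b$ is essentially the right one: the paper realises $\NS^{r_T}$ as $\bidual^{r_T}_{\bLambda} H^1_{\Sigma,\Iw}(\cO_{L,S},T)$ and constructs $\NS^b$ as the image of a projection map $\Theta$ from $\Det_{\bLambda}(C^\bullet_\infty)$, which, given a quadratic standard representative $[\Pi \xrightarrow{\psi} \Pi]$, is precisely the ``top minor'' element you have in mind. However, there are two genuine gaps in the remainder of your argument.

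First, you assert that the cokernel of $\NS^b \hookrightarrow \NS^{r_T}$ is \emph{isomorphic} to $\bLambda/\Fitt^0_{\bLambda}(H^2_{\Sigma,\Iw}(\cO_{L,S},T))^{\ast\ast}$. This is not what holds, and it is not what the pairing statement requires. The paper proves that $\Ext^1_{\bLambda}\bigl(\NS^{r_T}/\NS^b,\,\bLambda\bigr) \cong \bLambda/\Fitt^{\ast\ast}$, i.e.\ the two sides of the pairing are related by the duality $M \mapsto M^\lor \cong \Ext^1_\bLambda(M,\bLambda)$, not by an isomorphism. Part (c) of the theorem makes this distinction explicit: the pseudo-isomorphism between $\NS^{r_T}/\NS^b$ and $\bLambda/\Fitt^0$ carries a twist $(-)^\circ$ inverting the group action, exactly because one module is a dual of the other. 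In the special case that $\NS^{r_T}$ happens to be $\bLambda$-free of rank one (e.g.\ when $p\nmid|\cG|$) the two coincide, but the theorem is stated without that hypothesis. A perfect pairing $M \times N \to Q(\bLambda)/\bLambda$ requires $N \cong M^\lor$, and your argument proves neither this nor $M\cong N$.

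Second, you take for granted that $\Fitt^0_\bLambda(H^2)^{\ast\ast}$ is a principal ideal generated by a non-zero-divisor $a$, and your pairing formula $(x,\lambda)\mapsto \lambda\tilde{x}/a$ depends on this. When $p\mid|\cG|$ the ring $\bLambda$ is not regular, $H^2_{\Sigma,\Iw}$ need not be cyclic, $\Fitt^0$ is then not principal, and its reflexive hull need not be principal either. The argument the paper actually runs avoids this: it shows $(\NS^{r_T})^\ast \xrightarrow{\sim} I^{\ast\ast}$ via $f\mapsto f(\eta)$ (using Lemma \ref{LittleLemma}\,(d)), deduces the $\Ext^1$-isomorphism of part (a) by dualising $0\to\NS^b\to\NS^{r_T}\to Q\to 0$ and applying the Five Lemma, and then obtains the pairing from $Q^\lor\cong\Ext^1_\bLambda(Q,\bLambda)$ together with the reflexivity $Q\cong Q^{\lor\lor}$. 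The explicit formula is $(u,v)\mapsto \overline{v}\cdot\eta^\ast(\overline{u})$, with $\eta^\ast \in Q(\bLambda)\otimes_\bLambda(\NS^{r_T})^\ast$ the dual of $\eta$; no generator of $\Fitt^{\ast\ast}$ is needed. You would need to rework the final two steps of your proposal along these lines.
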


This pairing combines with the cyclotomic equivariant Iwasawa Main Conjecture proven by Burns and Greither \cite{BurnsGreither} to give in Theorem \ref{IMC-refinement} an explicit refinement of the classical cyclotomic Iwasawa main conjecture as follows (see Remark \ref{IMC-refinement-remark} for more details of the precise nature of the relation to the Main Conjecture).

\begin{thmplain}[Thm.\@ \ref{IMC-refinement}]
    Let $K = \QQ$ and let $L$ be the maximal totally real subfield of the cyclotomic field $\Q (\xi_{pf})$ for an integer $f$ coprime to $p$. If $p \nmid [L:\QQ]$, then for every character $\chi$ of $\gal{L}{K}$ 
    there is an isomorphism of $\Lambda_\chi := \ZZ_p(\im(\chi))\llbracket \Gamma \rrbracket$-modules
    \begin{align*}
        \faktor{U^{\infty,\chi}}{\Cyc^{\infty,\chi}} \cong \alpha  \left ( \faktor{\Lambda_\chi}{\cchar_{\Lambda_\chi}(A^{\infty,\chi})} \right),
    \end{align*}
    where $(-)^\chi$ is the functor taking $\chi$-isotypic parts, $U^\infty := \varprojlim_n \cO_{L_n}^\times \otimes_\ZZ \ZZ_p$, $\Cyc^\infty$ is the inverse limit of the groups of $p$-completed cyclotomic units of the field $L_n$, $A^\infty$ is the inverse limit of $p$-parts of the class groups of the fields $L_n$, and $\alpha (-) = \Ext^1_{\Lambda_\chi} ( -, \Lambda_\chi)$ denotes the Iwasawa adjoint. 
\end{thmplain}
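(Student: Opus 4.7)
The plan is to specialise the pairing of Theorem \ref{pairing-theorem} to the motive $T = \ZZ_p(1)$ with $K = \QQ$ and $L$ as prescribed, identify each factor in the pairing with its classical counterpart, take $\chi$-isotypic components, and use the equivariant cyclotomic Main Conjecture of Burns--Greither to convert the resulting perfect pairing into the stated isomorphism involving the Iwasawa adjoint.

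First I would carry out the classical identifications. For $T = \ZZ_p(1)$, Kummer theory and the standard comparison of Iwasawa cohomology with $p$-completed unit groups yield a natural $\bLambda$-isomorphism $\NS^{r_T} \cong U^\infty$, where the basic rank equals $r_T = 1$ because $L$ is totally real and $K = \QQ$ has a unique archimedean place. A Poitou--Tate argument likewise identifies the modified Iwasawa cohomology $H^2_{\Sigma,\Iw}(\cO_{L,S}, \ZZ_p(1))$ with $A^\infty$ up to at worst a finite $\Lambda$-module supported on the trivial character, which is harmless because $L$ totally real implies $H^0(L_\infty, \ZZ_p(1)) = 0$. In parallel, one checks that the elementary construction of the free rank-one submodule $\NS^b$ produced by Theorem \ref{pairing-theorem} recovers the inverse limit $\Cyc^\infty$ of $p$-completed cyclotomic units; this reflects the classical fact that the cyclotomic Euler system spans a free $\bLambda$-module of rank one inside $U^\infty$.

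Next I would exploit the hypothesis $p \nmid [L:\QQ]$, which ensures that $\ZZ_p[\Gal(L/K)]$ is a product of unramified extensions of $\ZZ_p$, so that the functor $(-)^\chi$ is exact and splits $\bLambda$ as a product of the regular local rings $\Lambda_\chi = \ZZ_p(\im(\chi))\llbracket\Gamma\rrbracket$. Applying $(-)^\chi$ to the pairing of Theorem \ref{pairing-theorem} and inserting the identifications above yields a perfect pairing of $\Lambda_\chi$-modules
\[
\faktor{U^{\infty,\chi}}{\Cyc^{\infty,\chi}} \;\times\; \faktor{\Lambda_\chi}{\Fitt_{\Lambda_\chi}(A^{\infty,\chi})^{**}} \;\longrightarrow\; \faktor{Q(\Lambda_\chi)}{\Lambda_\chi}.
\]
Since $\Lambda_\chi$ is a two-dimensional regular local ring, the reflexive hull of the Fitting ideal of a finitely generated torsion module coincides with the characteristic ideal, so $\Fitt_{\Lambda_\chi}(A^{\infty,\chi})^{**} = \cchar_{\Lambda_\chi}(A^{\infty,\chi})$; here the equivariant Main Conjecture of Burns--Greither is what ensures that the identifications of the first step are compatible with this ideal-theoretic description. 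Finally, the short exact sequence $0 \to \Lambda_\chi \to Q(\Lambda_\chi) \to Q(\Lambda_\chi)/\Lambda_\chi \to 0$ together with the vanishing of $\Hom_{\Lambda_\chi}(-, Q(\Lambda_\chi))$ on torsion modules identifies $\Hom_{\Lambda_\chi}(M, Q(\Lambda_\chi)/\Lambda_\chi)$ with $\Ext^1_{\Lambda_\chi}(M, \Lambda_\chi) = \alpha(M)$ for any torsion $M$; applied to $M = \Lambda_\chi/\cchar_{\Lambda_\chi}(A^{\infty,\chi})$ this turns the perfect pairing into the stated isomorphism.

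The main obstacle will be the identification of $\NS^b$ with $\Cyc^\infty$. It is not enough to know that both are free rank-one $\bLambda$-submodules of $U^\infty$ with the same characteristic ideal; one needs a genuine equality, which requires unravelling the explicit construction of $\NS^b$ from the proof of Theorem \ref{pairing-theorem} and matching it precisely with the norm-coherent sequences of cyclotomic units. A secondary but related delicate point will be ensuring that the classical identifications in the first step respect the $\chi$-decomposition without introducing finite kernels or cokernels that would spoil the precision of the stated isomorphism.
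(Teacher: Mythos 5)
Your high-level architecture matches the paper: specialise the pairing of Theorem \ref{pairing-theorem} to $T=\ZZ_p(1)$, use $p\nmid[L:\QQ]$ to pass to $\chi$-components, and convert the pairing into the Iwasawa adjoint via the identification $\Hom_{\Lambda_\chi}(-,Q(\Lambda_\chi)/\Lambda_\chi)\cong\Ext^1_{\Lambda_\chi}(-,\Lambda_\chi)$ on torsion modules. But your proposed ``classical identifications'' are false, and bridging them to the correct ones is precisely where all the work in the proof lies. Kummer theory identifies $\NS^{r_T}$ (here $r_T=1$) with $U^\infty_S$, the inverse limit of $p$-completed \emph{$S$-units}, not with $U^\infty$. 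The module $H^2_{\Sigma,\Iw}(\cO_{L,S},\ZZ_p(1))$ is an extension of $X^\infty_{S\setminus S_\infty}$ by the $S$-class group $A^\infty_S$; neither factor equals $A^\infty$, and $X^\infty_{S\setminus S_\infty}$ is neither finite nor supported only at the trivial character (its characteristic ideal at a character of conductor $d$ is generated by $t^{\epsilon_\chi}\prod_{l\mid f,\,l\nmid d}(1-\Frob_l^{-1})$ with $t=\gamma-1$, a genuine non-unit whenever $f/d>1$). Most seriously, $\NS^b\neq\Cyc^\infty$: Proposition \ref{eimc-proposition} and Burns--Greither give $\NS^b=\langle e^+\eta_f\rangle_\bLambda$ with $\eta_f=(1-\xi_{fp^{n+1}})_n$ at \emph{full} conductor $f$, whereas $\Cyc^{\infty,\chi}$ is generated by $e_\chi t^{\delta_\chi}\eta_d$ with $d$ the conductor of $\chi$.

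The paper resolves these mismatches by an explicit computation rather than an identification. The explicit formula of Theorem \ref{pairing-theorem}\,(b) shows that, under the pairing, $e_\chi\eta_d$ corresponds to multiplication by $e_\chi\theta_d^{-1}$, where $\theta_d=\prod_{l\mid f,\,l\nmid d}(1-\Frob_l^{-1})$ is the very Euler factor generating $\cchar(X^{\infty,\chi}_{S\setminus S_\infty})$. A snake-lemma diagram chase then cancels these Euler factors and the relevant $t$-powers, passing from an isomorphism at the $(U^\infty_S,\ \langle\eta_f\rangle,\ H^2_\Iw)$ level to one at the $(U^\infty,\ \Cyc^\infty,\ A^\infty)$ level, and the classical Mazur--Wiles Main Conjecture is invoked at the very end to identify the annihilator of $U^{\infty,\chi}/\Cyc^{\infty,\chi}$ with $\cchar(A^{\infty,\chi})$. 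You rightly flag $\NS^b\stackrel{?}{=}\Cyc^\infty$ as the main obstacle, but you frame it as a missing argument for a true statement; it is in fact a false statement, and the correct relationship, mediated by the Euler system norm relations, is what drives the proof.
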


It is conjectured, in great generality, that $H^2_\Iw(\cO_{L,S}, T)$ should be finitely generated as a $\Z_p$-module (\textit{c.f.}\@ Conjecture \ref{mu-vanishing-conjecture}). The above pairing allows us to give a reformulation of this conjecture in terms of the quotient $\NS^{r_T}/\NS^b$.

\begin{propplain}[Prop.\@ \ref{mu-vanishing-result}]
    $H^2_\Iw(\cO_{L,S}, T)$ is finitely generated as a $\Z_p$-module if and only if the same is true of $\NS^{r_T}/\NS^b$.
\end{propplain}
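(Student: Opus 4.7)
My plan is to transfer the finite-generation question across the perfect pairing of Theorem \ref{pairing-theorem}. The guiding fact is that a finitely generated torsion $\bLambda$-module $M$ is finitely generated over $\Z_p$ precisely when its Iwasawa $\mu$-invariant vanishes in every character component. Since $\bLambda = \Z_p \llbracket \Gal(L_\infty/K) \rrbracket$ is not in general an integral domain, I would decompose via the central idempotents attached to characters of $\Gal(L/K)$, reducing statements about $\mu$-invariants to the classical Iwasawa algebra $\Lambda = \Z_p \llbracket \Gamma \rrbracket$ (possibly after extending scalars to $\Z_p(\chi)$).

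The first step is to establish the chain of equivalences: $H^2_\Iw(\cO_{L,S}, T)$ is finitely generated over $\Z_p$ if and only if $H^2_{\Sigma,\Iw}(\cO_{L,S}, T)$ is, if and only if $\bLambda/\Fitt_{\bLambda}(H^2_{\Sigma,\Iw}(\cO_{L,S}, T))^{**}$ is. The first equivalence follows because the two cohomology groups differ only by local Iwasawa-cohomology contributions at the finite set $\Sigma$, each of which is already finitely generated over $\Z_p$ and so does not affect the $\mu$-invariant. The second rests on two classical facts: applied to any finitely generated torsion $\bLambda$-module $M$, the structure theorem yields $\mu(M) = 0$ if and only if $\bLambda/\Fitt_{\bLambda}(M)$ is finitely generated over $\Z_p$; and replacing $\Fitt_{\bLambda}(M)$ by its reflexive hull enlarges the ideal only by a pseudo-null factor, which preserves the $\mu$-invariant of the quotient.

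The second step is to apply Theorem \ref{pairing-theorem} directly: perfectness of the pairing identifies $\NS^{r_T}/\NS^b$ with the Iwasawa adjoint $\Hom_{\bLambda}(\bLambda/\Fitt_{\bLambda}(H^2_{\Sigma,\Iw}(\cO_{L,S}, T))^{**}, Q(\bLambda)/\bLambda)$ of the second factor. Since the Iwasawa adjoint of a finitely generated torsion $\bLambda$-module has the same characteristic ideal up to pseudo-nullity, and in particular the same $\mu$-invariant, finite generation over $\Z_p$ transfers between a module and its adjoint; combined with the first step this yields the desired equivalence. The main obstacle is keeping track of how the operations $(-)^{**}$ and the $\Sigma$-modification interact with $\mu$-invariants in the non-domain ring $\bLambda$, which I would handle via the character-by-character reduction mentioned above, where all the requisite statements (structure theorem, adjoint duality, invariance of characteristic ideals under reflexive-hull) are standard.
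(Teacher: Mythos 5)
Your proposal is correct and follows essentially the same route as the paper's proof: one transfers the question across Theorem \ref{pairing-theorem} (you use the perfect pairing of part (b), while the paper localises the $\Ext^1$ isomorphism of part (a) at singular primes, but these are equivalent via the identification $M^{\lor}\cong\Ext^1_\bLambda(M,\bLambda)$ for torsion $M$), invoking that the Iwasawa adjoint preserves the $\mu$-invariant and that $\Fitt^0_\bLambda(M)^{**}/\Fitt^0_\bLambda(M)$ is pseudo-null, then uses the Fitting-ideal/Nakayama characterisation of vanishing $\mu$ and Lemma \ref{mu-vanishing-independent-lemma} to pass between $H^2_{\Sigma,\Iw}$ and $H^2_{\Iw}$. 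The only minor imprecision is your phrasing about decomposing via ``central idempotents attached to characters of $\Gal(L/K)$'': when $p$ divides $[L:K]$ one only has idempotents for characters of the prime-to-$p$ quotient, which is exactly what the paper handles via Lemma \ref{IwasawaInvariants} (localisation at singular primes of $\bLambda$), but this does not affect the validity of the argument.
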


Since the aforementioned conjecture is known to hold in several cases one can use this equivalence to obtain several unconditional examples of the finite generation of $\NS^{r_T}/\NS^b$ as a $\Z_p$-module. We give one such example in the setting of elliptic curves in Corollary \ref{mu-vanishing-example}.\medskip \\
We also give applications towards Greenberg's conjecture and equivariant leading term conjectures. For statements of these results the reader is referred to Proposition \ref{GreenbergCriterion} and Theorem \ref{etnc-thm}, respectively.  

\begin{acknowledgments}
    The authors would like to extend their gratitude to David Burns and Takamichi Sano for several stimulating conversations and for their valuable comments on earlier versions of the present manuscript. They would also like to thank Andrew Graham, Martin Hofer and Daniel Macias Castillo for useful comments and discussions, and the referee for their careful reading and helpful suggestions and corrections.\\
    The  authors wish to acknowledge the financial support of the Engineering  and  Physical  Sciences  Research  Council [EP/L015234/1, EP/N509498/1],  the  EPSRC  Centre  for  Doctoral  Training  in  Geometry  and  Number  Theory  (The  London School of Geometry and Number Theory), University College London and King's College London.
\end{acknowledgments}

\markboth{Preliminaries on exterior biduals}{Definition and general properties}
\section{Preliminaries on exterior biduals}
\label{AppendixBiduals}

In this section we both survey the existing theory of exterior biduals and also establish the new results concerning these objects that are needed in 
later sections
of this article. 

\tocless\subsection{Definition and general properties}

Let $R$ be a commutative Noetherian ring, and for any $R$-module put $M^\ast = \Hom_R (M, R)$. 

\begin{definition}
Let $M$ be an $R$-module. 
For any integer $r \geq 0$ we define the  $r$-th \emph{exterior bidual} of $M$ as
\[
\bidual^r_R M = \left ( \exprod^r_R M^\ast \right )^\ast.
\]
In particular, the symbol $\bidual$ does \textit{not} refer to an intersection in this context. 
\end{definition}

This definition is inspired by the notion of \textit{Rubin lattice} introduced in \cite{Rubin96}, and first appeared in the above formalised form in \cite{EulerSystemsSagaI}. See \cite[Rem.~A.9]{EulerSystemsSagaI} for the relation between these two definitions.

\paragraph{Some maps}
Let us now introduce a collection of morphisms that is ubiquitous throughout the theory of exterior biduals. For this purpose, let $M$ and $N$ be $R$-modules. For any integer $r \geq 1$ and morphism $f \in \Hom_R (M, N)$ we define 
\[ 
f^{(r)} \: \exprod^r_R M \to N \otimes_R \exprod^{r - 1}_R M
\]
by
\[
f^{(r)} (m_1 \wedge \dots \wedge m_r) = \sum_{i = 1}^r ( - 1)^{i + 1} f (m_i) \otimes m_1 \wedge \dots \wedge \widehat{m_{i}} \wedge \dots \wedge m_r, 
\]
where we write $\widehat{m_i}$ to mean omission of this particular coefficient. By abuse of notation, we shall simply denote $f^{(r)}$ by $f$ as well. \\

For any integer $s \leq r$ this construction 
(specialised at $N = R$
) 
induces a natural morphism
\begin{equation} \label{wedge-morphism}
\exprod^s_R M^\ast \to \Hom_R \left( \exprod^r_R M, \; \exprod^{r - s}_R M\right), \quad
f_1 \wedge \dots \wedge f_s \mapsto f_s \circ \dots \circ f_1
\end{equation}
which is sometimes referred to as the \textit{rank reduction} by $f$. 
If $\mathfrak{S}_r$ is the permutation group on $r$ elements, then a more explicit description of the above map is given by 
\begin{equation} \label{ExplicitFormula}
f_1 \wedge \dots \wedge f_r \mapsto 
\bigg \{ m_1 \wedge \dots \wedge m_r \mapsto 
\sum_{\sigma \in \mathfrak{S}_{r,s}} \text{sgn} (\sigma) \det( f_i (m_{\sigma (j)}))_{1 \leq, i, j \leq r} m_{\sigma (s +1)} \wedge \dots \wedge m_{\sigma (r)}
\bigg \},
\end{equation}
where $\mathfrak{S}_{r,s} = \{ \sigma \in \mathfrak{S}_r \mid \sigma (1) < \dots < \sigma (s) \text{ and } \sigma (s + 1) < \dots < \sigma (r) \}$. By virtue of this map, we shall regard any element of $\exprod^s_R M^\ast$ as an element of $\Hom_R (\exprod^r_R M, \; \exprod^{r - s}_R M^\ast)$. 

\paragraph{General properties}

Let $N$ and $M$ be $R$-modules, and $r \geq 0$ an integer. 
The following properties are immediate from the properties of the functors $\exprod^r_R -$ and $\Hom_R ( -, R)$:
\begin{itemize}
\item $\bidual^0_R M = R$ and $\bidual^1_R M = M^{\ast \ast}$.
\item Every morphism of $R$-modules $N \to M$ induces a morphism $\bidual^r_R N \to \bidual^r_R M$.
\item There is a natural morphism
\[
\xi^r_M \: \exprod^r_R M \to \bidual^r_R M, \quad m \mapsto \{ f \mapsto f (m) \}
\]
which is neither injective nor surjective in general. If $M$ is a finitely generated projective $R$-module, however, then the map $\xi^r_M$ is an isomorphism (see \cite[Lem. A.1]{EulerSystemsSagaI}).\\
For any $s \leq r$ and $f \in \exprod^s_R M^\ast$ the morphism $\xi^r_M$ fits into a commutative diagram
\begin{cdiagram}
\exprod^r_R M \arrow{r}{f} \arrow{d}{\xi^r_M} & \exprod^{r - s}_R M \arrow{d}{\xi^{r - s}_M} \\
\bidual^r_R M \arrow{r}{f} & \bidual^{r - s}_R M,
\end{cdiagram}
where the bottom map is defined as the dual of 
\[
\exprod^{r - s}_R M^\ast \to \exprod^r_R M^\ast, \quad g \mapsto f \wedge g. 
\]
\item Let $Q$ be the total ring of fractions of $R$ and assume that $R$ is reduced. It is shown in \cite[Prop.\@ A.8]{EulerSystemsSagaI} that the map $\xi^r_M$ induces an isomorphism
\begin{equation} \label{xi-map}
\big \{ a \in Q \otimes_R \exprod^r_R M \mid f (a) \in R \text{ for all } f \in \exprod^r_R M^\ast \big \} \stackrel{\simeq}{\longrightarrow} \bidual^r_R M. 
\end{equation} 
\end{itemize}

\tocless\subsection{Functoriality aspects}

We follow Sakamoto \cite[Appendix B3]{Sakamoto20} in considering the following two conditions on the commutative Noetherian ring $R$:
\begin{itemize}
\item[($G_n$)] The ring $R$ is said to satisfy the condition $(G_n)$ if the localisation $G_\p$ is Gorenstein for any prime $\p \in \Spec R$ of height $\text{ht} (\p) \leq n$. 
\item[($S_n$)] The ring $R$ is said to satisfy Serre's condition $(S_n)$ if $\text{depth } R_\p \geq \text{inf} (n, \text{ht} (\p) )$ holds for any prime ideal $\p \in \Spec R$. 
\end{itemize}

\begin{rk}
For example, $R$ is Cohen-Macaulay if and only if $R$ satsfies $(S_n)$ for all $n \in \N_0$. In particular, Gorenstein rings satisfiy both $(G_n)$ and $(S_n)$ for all $n \in \N_0$. Examples of Gorenstein rings include equivariant Iwasawa algebras of the form considered in the present article (see Lemma \ref{LambdaGorenstein}) as well as finite group rings (see \cite[p.\@ 779]{CurtisReiner}).\\
In the sequel rings satisfying $(G_1)$ and $(S_2)$ will play an important role. Such rings are studied in, for example, \cite{Vasconcelos68} and \cite{Vasconcelos70} where they are referred to as \textit{quasi-normal}.  
\end{rk}

\begin{lem} \label{RyotarosLemma1}
Suppose that $R$ satisfies both $(G_0)$ and $(S_1)$. If $N \hookrightarrow M$ is an injective morphism of $R$-modules, then for any integer $r \geq 1$ the induced map
\[
\bidual^r_R N \to \bidual^r_R M
\] 
is injective as well. 
\end{lem}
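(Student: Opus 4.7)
My plan is to split the proof into a local and a global step. Locally at minimal primes of $R$ I will invoke condition $(G_0)$ to obtain injectivity, while condition $(S_1)$ will be used to promote local injectivity at minimal primes to global injectivity of $\bidual^r_R N \to \bidual^r_R M$.

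For the local step, fix a prime $\p \in \Spec R$ of height zero. By $(G_0)$, the ring $R_\p$ is Gorenstein, and since $\dim R_\p = 0$ it is Artinian, hence self-injective as a module over itself. Consequently $\Hom_{R_\p}(-, R_\p)$ is an exact functor, so dualising the injection $N_\p \hookrightarrow M_\p$ yields a surjection $M_\p^\ast \twoheadrightarrow N_\p^\ast$. Since $r$-th exterior powers preserve surjections and $\Hom_{R_\p}(-, R_\p)$ sends surjections to injections, one obtains an injection $\bidual^r_{R_\p}(N_\p) \hookrightarrow \bidual^r_{R_\p}(M_\p)$. Because both $\exprod^r_R(-)$ and $\Hom_R(-, R)$ commute with localisation on finitely generated modules, it follows that the map $\bidual^r_R N \to \bidual^r_R M$ is injective after localisation at every minimal prime of $R$.

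For the global step, observe that under $(S_1)$ any prime $\p$ of positive height satisfies $\text{depth}(R_\p) \geq 1$ and hence does not lie in $\text{Ass}(R)$; this forces $\text{Ass}(R) = \text{Min}(R)$. Combined with the standard equality $\text{Ass}_R(\Hom_R(B, R)) = \text{Supp}_R(B) \cap \text{Ass}(R)$ applied to the finitely generated module $B = \exprod^r_R N^\ast$, one concludes that $\text{Ass}(\bidual^r_R N) \subseteq \text{Min}(R)$. A standard argument about associated primes (any nonzero element $x$ of an $R$-module $A$ has $\text{Ann}(x)$ contained in some element of $\text{Ass}(A)$) then shows that the natural map
\[
\bidual^r_R N \to \prod_{\p \in \text{Min}(R)} (\bidual^r_R N)_\p
\]
is injective. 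Putting this together with the local injectivity established above yields the desired conclusion.

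The main obstacle is the global-to-local reduction in the last paragraph, specifically the bound $\text{Ass}(\bidual^r_R N) \subseteq \text{Min}(R)$: this relies on Bourbaki's formula for associated primes of a Hom module and requires finite generation of $N$, which is implicit throughout the paper. The local step, by contrast, is essentially immediate once one recalls that Artinian Gorenstein local rings are self-injective.
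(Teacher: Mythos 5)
Your proof is correct. The paper supplies no argument of its own for this lemma; it simply cites Sakamoto (Lem.\ C.1 of that reference), so there is nothing explicit in the paper to compare against, but your local--global strategy is the natural one and almost certainly mirrors the cited proof. The local step is sound: at a minimal prime $\p$ the ring $R_\p$ is an Artinian Gorenstein local ring, hence self-injective, so dualising $N_\p \hookrightarrow M_\p$ gives a surjection $M_\p^\ast \twoheadrightarrow N_\p^\ast$; applying $\exprod^r_{R_\p}$ preserves the surjection, and $\Hom_{R_\p}(-,R_\p)$ (which is left exact for any module, so no Gorenstein input is needed at this final step) converts it to an injection $\bidual^r_{R_\p}N_\p \hookrightarrow \bidual^r_{R_\p}M_\p$. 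The global step is also correct: $(S_1)$ forces $\mathrm{Ass}(R) = \mathrm{Min}(R)$, and Bourbaki's formula $\mathrm{Ass}_R(\Hom_R(B,R)) = \mathrm{Supp}_R(B) \cap \mathrm{Ass}(R)$ for finitely generated $B$ places every associated prime of $\bidual^r_R N = \Hom_R(\exprod^r_R N^\ast, R)$ among the minimal primes, which yields the injection of $\bidual^r_R N$ into the product of its localisations at minimal primes. The finite generation of $N$ and $M$ that you flag is genuinely needed (both for Bourbaki's formula and for the commutation of $\Hom_R(-,R)$ with localisation) and is implicit in the setting, so this is a caveat rather than a gap. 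No missing steps.
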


\begin{proof} See \cite[Lem.\@ C.1]{Sakamoto20}.
\end{proof}

\begin{lem} \label{BidualsLimits}
Suppose that $R$ satisfies $(S_2)$ and $(G_1)$ and admits a presentation as an inverse limit $R = \varprojlim_{i \in I} R_i$ where each $R_i$ is a Noetherian ring satisfying $(S_2)$ and $(G_1)$.
Let $C^\bullet$ be a complex of projective $R$-modules such that there is an integer $n \geq 0$ satsifying $C^i = 0$ if $i < 0$ or $i > n$. 
 Then there is an isomorphism
\[
\bidual^r_R H^0 (C^\bullet) \stackrel{\simeq}{\longrightarrow} \varprojlim_{i \in I} \bidual^r_{R_i} H^0 (C^\bullet \otimes^\mathbb{L}_R R_n ).
\]
\end{lem}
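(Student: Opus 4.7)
My plan is to identify $\bidual^r_R H^0(C^\bullet)$ with a concrete kernel built from the complex $C^\bullet$, in such a way that the identification is manifestly compatible with the inverse limit structure, and then to invoke left-exactness of $\varprojlim$. Write $M = H^0(C^\bullet)$ and $M_i = H^0(C^\bullet \otimes^{\mathbb{L}}_R R_i)$. Since $C^\bullet$ is a bounded complex of projective $R$-modules, $C^\bullet \otimes^{\mathbb{L}}_R R_i = C^\bullet \otimes_R R_i$, and since $C^i = 0$ for $i < 0$ we have $M = \ker(d^0 \colon C^0 \to C^1)$ and $M_i = \ker(d^0 \otimes_R R_i)$. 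I would assume (as is the case in all relevant applications) that the terms $C^j$ are finitely generated, so that $\varprojlim_i (C^j \otimes_R R_i) = C^j$.

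The heart of the argument is to show that, under the hypotheses $(S_2)$ and $(G_1)$, the natural inclusion $\bidual^r_R M \hookrightarrow \bidual^r_R C^0 = \exprod^r_R C^0$ identifies $\bidual^r_R M$ with the kernel of the $R$-linear map $(d^0)^{(r)} \colon \exprod^r_R C^0 \to C^1 \otimes_R \exprod^{r-1}_R C^0$ obtained by applying the construction $f \mapsto f^{(r)}$ from Section \ref{AppendixBiduals} to $f = d^0$; the injectivity of the embedding follows from Lemma \ref{RyotarosLemma1} applied to $M \hookrightarrow C^0$, using that $C^0$ is finitely generated projective, so that $\bidual^r_R C^0 = \exprod^r_R C^0$. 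The inclusion $\bidual^r_R M \subseteq \ker (d^0)^{(r)}$ is immediate from the fact that $d^0$ annihilates $M$ by definition. For the reverse inclusion, I would localise at primes $\mathfrak{p} \in \Spec R$ of height at most one: the condition $(S_2)$ ensures that the bidual of a finitely generated module is determined by its behaviour at such primes, while $(G_1)$ makes each local ring $R_{\mathfrak{p}}$ Gorenstein, reducing the computation to exterior algebra over discrete valuation rings, where a Smith normal form argument closes the case.

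Exactly the same description then applies to each $R_i$ (which by hypothesis satisfies both $(S_2)$ and $(G_1)$), and because $(d^0)^{(r)}$ is constructed functorially from $d^0$, the corresponding $R_i$-maps are obtained by base change. Passing to the inverse limit, left-exactness of $\varprojlim$ together with the identities $\varprojlim_i (C^0 \otimes_R R_i) = C^0$ and $\varprojlim_i (C^1 \otimes_R \exprod^{r-1}_R C^0 \otimes_R R_i) = C^1 \otimes_R \exprod^{r-1}_R C^0$ then yield the required isomorphism
\[
\bidual^r_R M \;\cong\; \varprojlim_{i \in I} \bidual^r_{R_i} M_i.
\]

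The main obstacle is the kernel description of the bidual established in the second paragraph. The easy inclusion is immediate, but the reverse inclusion is where the hypotheses $(G_1)$ and $(S_2)$ are truly used, and requires the localisation-and-dévissage technique characteristic of Sakamoto's formalism for biduals over quasi-normal rings. Everything else in the proof is essentially formal.
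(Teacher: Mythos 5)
The paper's own proof of this lemma is a pure citation to \cite[Lem.\@ B.15]{Sakamoto20}, so there is no in-house argument to compare against; what you have done is reconstruct a proof, and your outline is sound and very likely close to what Sakamoto does. The crucial step you defer to a ``localisation-and-d\'evissage'' argument --- the identification of $\bidual^r_R M$ with $\ker\big((d^0)^{(r)}\colon \exprod^r_R C^0 \to C^1 \otimes_R \exprod^{r-1}_R C^0\big)$ under $(G_1)$ and $(S_2)$ --- is in fact precisely the content of Lemma \ref{LittleLemma}\,(a) in the paper, i.e.\ Sakamoto's Lemma B.12, so you could simply cite it rather than redo the localisation argument; note also that the restriction $s \leq r$ appearing in the statement of Lemma \ref{LittleLemma} is needed only for the rank-reduction $\varphi$ in parts (b)--(d) and plays no role in part (a). Once that is in hand, your passage to the limit --- quoting that $\varprojlim_i(P\otimes_R R_i) \cong P$ for finitely generated projective $P$ together with left-exactness of $\varprojlim$, and observing that the kernel descriptions over the $R_i$ are obtained by the same recipe applied to $C^\bullet \otimes_R R_i$ --- gives the claim exactly as you say. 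Two small points of care: you should record explicitly that the terms $C^j$ must be taken finitely generated (this is implicit in the lemma and needed both for $\exprod^r C^0 = \bidual^r C^0$ and for $\varprojlim(C^j\otimes_R R_i)=C^j$), and since the $C^j$ are only assumed projective rather than free one has to embed $C^1$ as a direct summand of a free module before Lemma \ref{LittleLemma}\,(a) literally applies --- a routine reduction, as the relevant kernels are unchanged.
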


\begin{proofbox}
See \cite[Lem.\@ B.15]{Sakamoto20}.
\end{proofbox}

\begin{lem} \label{BaseChangeLem}
Let $R$ be a Noetherian ring, and let $R \to R'$ be a ring morphism that endows $R'$ with the structure of an $R$-module of projective dimension at most one.
\begin{enumerate}[label=(\alph*)]
    \item{If $M$ is a finitely generated $R$-module such that $\Ext^1_R (M, R) = 0$ and
    \begin{cdiagram}
   0 \arrow{r} & P_2 \arrow[r, "f"] & P_1 \arrow{r} & R' \arrow{r} & 0,
\end{cdiagram}
is any exact sequence of $R$-modules, where $P_1$ and $P_2$ are finitely generated projective,
then there is a natural exact sequence
\begin{cdiagram}[column sep=tiny]
0 \arrow{r} & \left ( \bidual^r_R M \right ) \otimes_R R' \arrow{r} & \bidual^r_{R'} ( M \otimes_R R')
\arrow{r} & \Ext^1_R \big ( \exprod^r_R M^\ast, P_2 \big) \arrow[r, "f"] & \Ext^1_R \big ( \exprod^r_R M^\ast, P_1 \big).
\end{cdiagram}}
    \item{If $P$ is a finitely generated projective $R$-module and $M \subseteq P$ then we have a commutative diagram
        \begin{cdiagram}
            \left ( \bidual^r_R M \right ) \otimes_R R' \arrow[r, hookrightarrow] \arrow[d] &\bidual^r_{R'} ( M \otimes_R R') \arrow[d]\\
            \left ( \exprod^r_R P \right ) \otimes_R R' \arrow[r, "\sim"] &\exprod^r_{R'} ( P \otimes_R R')
        \end{cdiagram}
        where the vertical arrows are induced by the identification of biduals with exterior powers for finitely generated projective modules and the bottom arrow is the natural base-change isomorphism.
    }
\end{enumerate}

\end{lem}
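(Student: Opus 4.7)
For part (a), I plan to apply the left-exact functor $\Hom_R(\exprod^r_R M^\ast, -)$ to the given short exact sequence $0 \to P_2 \xrightarrow{f} P_1 \to R' \to 0$. Since $R'$ has projective dimension at most one as an $R$-module, this yields a five-term exact sequence terminating in the map $\Ext^1_R(\exprod^r_R M^\ast, P_2) \xrightarrow{f} \Ext^1_R(\exprod^r_R M^\ast, P_1)$ appearing in the statement. The first two terms can be identified with $\bidual^r_R M \otimes_R P_i$ by means of the standard isomorphism $\Hom_R(N, P) \cong \Hom_R(N, R) \otimes_R P$ valid for any finitely generated projective $R$-module $P$ and any $R$-module $N$ (verified for $P = R^n$ and extended to summands). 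Tensoring $\bidual^r_R M$ with the original sequence then shows that the cokernel of the resulting map is precisely $\bidual^r_R M \otimes_R R'$; the injectivity of this map, which follows from the left-exactness of $\Hom_R(\exprod^r_R M^\ast, -)$, simultaneously yields the vanishing of $\Tor^1_R(\bidual^r_R M, R')$.

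The crucial remaining identification is $\Hom_R(\exprod^r_R M^\ast, R') \cong \bidual^r_{R'}(M \otimes_R R')$, and this is where the hypothesis $\Ext^1_R(M, R) = 0$ enters. Applying $\Hom_R(M, -)$ to the original resolution and noting that $\Ext^1_R(M, P_i) = 0$ (as $P_i$ is a direct summand of a free $R$-module) yields an isomorphism $M^\ast \otimes_R R' \cong \Hom_R(M, R') \cong \Hom_{R'}(M \otimes_R R', R')$. Passing to $r$-th exterior powers, dualising over $R'$, and invoking the tensor-hom adjunction $\Hom_R(\exprod^r_R M^\ast, R') \cong \Hom_{R'}(\exprod^r_R M^\ast \otimes_R R', R')$ then provides the desired identification. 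Substituting everything into the five-term sequence produces the claimed exact sequence of part (a).

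For part (b), the natural transformation $\bidual^r_R(-) \otimes_R R' \to \bidual^r_{R'}((-) \otimes_R R')$ is functorial in its argument, so evaluating it at the inclusion $M \hookrightarrow P$ and invoking the identification of biduals with exterior powers for the finitely generated projective modules $P$ and $P \otimes_R R'$ immediately yields the commutativity of the square. For the injectivity of the top horizontal arrow, I would reduce, via the commutativity and the bottom isomorphism, to showing that the composition $\bidual^r_R M \otimes_R R' \to \exprod^r_R P \otimes_R R' \xrightarrow{\sim} \exprod^r_{R'}(P \otimes_R R')$ is injective. This last point is the main technical obstacle, and my plan is to combine the injectivity of $\bidual^r_R M \hookrightarrow \exprod^r_R P$ provided by Lemma \ref{RyotarosLemma1} with an analysis of base change along $R \to R'$ that exploits the projective resolution of $R'$ of length at most one.
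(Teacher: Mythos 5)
Your proof of part (a) is essentially identical to the paper's argument. Both you and the authors apply $\Hom_R(N,-)$ to the projective resolution of $R'$, identify $\Hom_R(N,P_i)$ with $\Hom_R(N,R)\otimes_R P_i$ using projectivity of $P_i$, feed in $N = M$ to obtain the isomorphism $M^\ast\otimes_R R' \cong \Hom_R(M,R')$ from the vanishing $\Ext^1_R(M,P_2)=0$, and then combine the tensor-hom adjunction with base change of exterior powers to identify $\Hom_R(\exprod^r_R M^\ast, R')$ with $\bidual^r_{R'}(M\otimes_R R')$. The only cosmetic difference is that the paper packages the comparison between $\Hom_R(N,R)\otimes_R P_i$ and $\Hom_R(N,P_i)$ into a Snake Lemma application, whereas you phrase it as reading off the cokernel directly from the $\Hom$ long exact sequence; these are the same computation.

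For part (b), your functoriality argument for commutativity is sound and is what the paper has in mind when it says "straightforward, but somewhat tedious, diagram chase." However, your treatment of the injectivity of the top horizontal arrow contains a genuine gap, and moreover is a detour you do not need. The top arrow in (b) is the very same natural map that appears in the exact sequence of part (a), and the exactness established there already shows $0 \to (\bidual^r_R M)\otimes_R R' \to \bidual^r_{R'}(M\otimes_R R')$, i.e.\ the injectivity is immediate. (Note that (b) must be read with the hypotheses of (a) in force, since otherwise even the existence of the top arrow would not follow from the construction you give.) Your alternative plan — deducing injectivity of the top map from injectivity of $\bidual^r_R M \otimes_R R' \to \exprod^r_R P \otimes_R R'$ — is not completed: tensoring the inclusion $\bidual^r_R M \hookrightarrow \exprod^r_R P$ with $R'$ does not obviously preserve injectivity, as you would need $\Tor_1^R(\exprod^r_R P/\bidual^r_R M, R')$ to die in the relevant map, and the projective resolution of $R'$ by itself does not give you that. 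You should discard this plan and simply cite the left-exactness from part (a).
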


\begin{rk}
Unlike exterior powers, exterior biduals are not in general compatible with base change (see \cite[Rem.\@ B.9]{Sakamoto20} for a short discussion). Lemma \ref{BaseChangeLem} is therefore an exceptional phenomenon.
\end{rk}

\textit{Proof of \ref{BaseChangeLem}}:
At the outset we fix a finitely generated $R$-module $N$ and note that we have a commutative diagram
\begin{cdiagram}
   & \Hom_R (N, R) \otimes_R P_2 \arrow{d}{\simeq} \arrow{r} & \Hom_R (N, R) \otimes_R P_1 \arrow{d}{\simeq} \arrow{r} & \Hom_R (N, R) \otimes_R R' \arrow{d} \arrow[r] &0\\
   0 \arrow{r} & \Hom_R (N, P_2) \arrow{r} & \Hom_R (N, P_1) \arrow{r} & \Hom_R (N, R')  &.
\end{cdiagram}
Now, the Snake Lemma implies that we have an exact sequence
\begin{equation} \label{snake-lemma}
    \begin{tikzcd}[column sep=small]
        0 \arrow{r} & \Hom_R (N, R) \otimes_R R' \arrow{r} &  \Hom_R (N, R') 
        \arrow{r} & \Ext^1_R (N, P_2) \arrow{r} & \Ext^1_R (N, P_1).
    \end{tikzcd}
\end{equation}
Since $P_2$ is a direct summand of a free $R$-module, the assumption $\Ext^1_R(M,R) = 0$ ensures that 
$\Ext^1_R (M, P_2) = 0$. As a consequence, if we take $N = M$ in the above exact sequence then we deduce that the natural map
\begin{cdiagram}
 \Hom_R (M, R) \otimes_R R' \to \Hom_R (M, R') .
\end{cdiagram}\label{hom-commutes-with-base-change}
is an isomorphism. Taking $R'$-exterior powers and $R'$-duals, we deduce an isomorphism
\begin{align*}
\Hom_{R'} \left( \left(\exprod^r_{R} M^\ast \right) \otimes_R R', \; R'\right) & = \Hom_{R'} \left( \exprod^r_{R'} M^\ast \otimes_R R', \; R'\right) \\
& \cong \Hom_{R'} \left( \exprod^r_{R'} \Hom_R (M, R'), \; R'\right).
\end{align*}
The tensor-hom adjunction implies that
\begin{equation} \label{adjunction}
    \Hom_R (M, R') = \Hom_R (M, \Hom_{R'} (R', R')) = \Hom_{R'} ( M \otimes_R R', R'),
\end{equation}
so we in fact have an isomorphism
\[
\Hom_{R'} \left( \left(\exprod^r_{R} M^\ast \right) \otimes_R R', \; R'\right) \cong \bidual^r_{R'} ( M \otimes_R R').
\]
Using (\ref{adjunction}) again, this time for the left hand side, we can restate this as 
\[
\Hom_{R} \left( \exprod^r_{R} M^\ast, \; R'\right)\cong \bidual^r_{R'} ( M \otimes_R R').
\]
By taking $N = \exprod_R^r M^*$ in (\ref{snake-lemma}) and noting that
\[
\left(\bidual^r_R M \right) \otimes_R R' = \Hom_R \left( \exprod^r_R M^\ast, \; R\right) \otimes_R R'
\]
we then obtain an exact sequence of the desired form. The commutativity of the diagram in the second claim of the Lemma now follows via a straightforward, but somewhat tedious, diagram chase using the naturality of the base-change and tensor-hom isomorphisms.
\qed

\tocless\subsection{Further properties}

\begin{lem} \label{LittleLemma}
Suppose that $R$ satisfies $(G_1)$ and $(S_2)$.
Let $s \geq 1$ be an integer and
\begin{equation} \label{statement-sequence}
\begin{tikzcd}[column sep=3.5em]
   0 \arrow{r} & X \arrow{r} & Y \arrow{r}{\bigoplus_{i = 1}^s \varphi_i} & R^{\oplus s} \arrow{r} & Z \arrow{r} & 0
\end{tikzcd}
\end{equation}
an exact sequence of $R$-modules, where $Y$ is a free $R$-module of rank $d$. Fix an integer $r$ such that $1 \leq s \leq r \leq d$ and consider the map 
\[
\varphi = \bigwedge_{1 \leq i \leq s} \varphi_i \: \exprod^r_R Y \to \exprod^{r - s}_R Y. 
\]
Then the following hold:
\begin{liste}
\item There exists an exact sequence
\begin{cdiagram}
0 \arrow{r} & \bidual^r_R X \arrow{r} & \exprod^r_R Y \arrow{r}{\oplus_{i = 1}^s \varphi_i} & \displaystyle \bigoplus_{i = 1}^s \exprod^{r - 1}_R Y .
\end{cdiagram}
\item We have an inclusion
\[
\im \varphi \subseteq \bidual^{r - s}_R X,
\]
where we regard $\bidual^{r - s}_R X$ as a submodule of $\bidual^{r-s}_R Y = \exprod^{r - s}_R Y$ via Lemma \ref{RyotarosLemma1}.
\item We have an inclusion 
\[
\Fitt^{0}_R (Z) \subseteq \left\{ f (a) \mid a \in \im \varphi, \; f \in \exprod^{r - s}_R X^\ast \right\}  \]
with pseudo-null cokernel. 
\item Take $r = d$ and let $a \in \im \varphi$ be an $R$-generator. If $R$ is reduced and $Z$ is torsion, then $a$ is not a zero-divisor and there is an isomorphism
\[
\Big (\bidual^{d - s}_R X \Big)^\ast \stackrel{\simeq}{\longrightarrow} \Fitt_R^0 (Z)^{\ast \ast}, 
\quad \Phi \mapsto \Phi (a).
\]
\end{liste}
\end{lem}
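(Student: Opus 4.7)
The plan is to establish the four parts in sequence, with (b) and (d) following essentially by formal manipulation from (a) and (c) respectively. The main technical work is concentrated in (a) -- identifying $\bigcap^r_R X$ as the kernel of the rank-reduction maps -- and in (c) -- controlling the pseudo-null cokernel in the comparison with the Fitting ideal.

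\textbf{Parts (a) and (b).} For (a), the functoriality of biduals together with Lemma~\ref{RyotarosLemma1} (valid under $(G_0)$ and $(S_1)$, both implied by our hypotheses) yields an injection $\bigcap^r_R X \hookrightarrow \bigwedge^r_R Y$, and since each $\varphi_i$ restricts to zero on $X$ the composite $\bigcap^r_R X \to \bigwedge^r_R Y \to \bigoplus_i \bigwedge^{r-1}_R Y$ vanishes. To establish the reverse inclusion I would split the four-term sequence as $0 \to X \to Y \to I \to 0$ and $0 \to I \to R^s \to Z \to 0$ for $I = \im(\oplus_i \varphi_i)$, apply $\Hom_R(-,R)$ to obtain a presentation of $X^\ast$ involving $Y^\ast$ and a quotient of $R^s$, then take exterior powers and dualise once more; the freeness of $Y$ and $R^s$ controls the $\mathrm{Ext}$ terms that appear. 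Part (b) then follows from (a) applied with $r-s$ in place of $r$, once one observes that $\varphi_i \circ \varphi = 0$ for every $i \in \{1, \dots, s\}$, because $\varphi_i \wedge \varphi_1 \wedge \cdots \wedge \varphi_s = 0$ in $\bigwedge^{s+1}_R Y^\ast$; the boundary case $r = s$ is handled directly by $\bigcap^0_R X = R$.

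\textbf{Parts (c) and (d).} For (c), I would exhibit generators of $\Fitt_R^0(Z)$ explicitly as values $f(\varphi(a))$: after fixing an $R$-basis $e_1, \dots, e_d$ of $Y$, the Fitting ideal is generated by the $s \times s$ minors of the matrix $(\varphi_i(e_j))$, and each such minor is recovered by pairing $\varphi(e_{j_1} \wedge \cdots \wedge e_{j_r})$ for suitably chosen indices with the restriction to $X^\ast$ of an appropriate wedge of dual basis elements of $Y^\ast$. The pseudo-null cokernel claim is the crux of the lemma: I would verify equality after localising at any prime $\mathfrak{p}$ of height at most one, where the hypothesis $(G_1)$ makes $R_\mathfrak{p}$ Gorenstein so that biduals collapse to exterior powers, and the identity reduces to a standard computation from a free presentation of $Z_\mathfrak{p}$. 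For (d), the case $r = d$ gives $\bigwedge_R^d Y \cong R$, so $\varphi$ is multiplication by an element which, up to unit, equals $a$. That $a$ is not a zero-divisor follows from reducedness of $R$: at each minimal prime $\mathfrak{p}$ the torsion hypothesis forces $Z_\mathfrak{p} = 0$, so $(\oplus \varphi_i)_\mathfrak{p}$ is surjective and a maximal minor of $(\varphi_i(e_j))$ is a unit in $R_\mathfrak{p}$. The map $\Phi \mapsto \Phi(a)$ on $(\bigcap_R^{d-s} X)^\ast$ is then injective, and combining parts (b) and (c) with a bidualisation shows its image is $\Fitt_R^0(Z)^{\ast\ast}$, yielding the claimed isomorphism.

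\textbf{Main obstacle.} The hardest step is the pseudo-null cokernel claim in (c): controlling it under the weak hypotheses $(G_1)$ and $(S_2)$ requires a careful localisation argument at height-one primes and a case-by-case verification that biduals and Fitting ideals interact as expected over a Gorenstein base. A secondary subtlety is that, without assuming $R$ to be reduced for parts (a)--(c), the useful characterisation \eqref{xi-map} of biduals is unavailable, so the duality computations in (a) must be carried out purely by formal manipulation of $\Hom$ and $\mathrm{Ext}$.
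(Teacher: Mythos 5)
Your overall strategy matches the paper's: you derive (b) from (a) by noting $\varphi_k \wedge \varphi = 0$ in $\exprod^{s+1}_R Y^\ast$, establish (c) through the identity $\Fitt^0_R(Z) = \{f(a) \mid f \in \exprod^{r-s}_R Y^\ast, a \in \im\varphi\}$ followed by a localisation at height-one primes, and argue (d) through injectivity of $\Phi \mapsto \Phi(a)$ plus a reflexivity argument. You attempt to reprove (a) directly where the paper simply cites Sakamoto's Lem.~B.12; that is a reasonable substitute. However, two key technical steps are glossed in a way that obscures the real work. In (c) the relevant mechanism is \emph{not} that biduals collapse to exterior powers over $R_\p$ (which would require $X_\p$ to be projective or at least reflexive, neither of which is given): the actual point is that the cokernel of $\exprod^{r-s}_R Y^\ast \to \exprod^{r-s}_R X^\ast$ is controlled by $\coker\{Y^\ast \to X^\ast\} = \Ext^1_R(\im\oplus_i\varphi_i, R)$, and for $\p$ of height at most one this localises to $\Ext^2_{R_\p}(Z_\p, R_\p) = 0$ because $(G_1)$ forces $R_\p$ to have injective dimension at most one; this allows any $f \in \exprod^{r-s}_{R_\p} X_\p^\ast$ to be lifted to $\exprod^{r-s}_{R_\p} Y_\p^\ast$, giving the desired containment after localisation. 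In (d), your phrase ``combining parts (b) and (c) with a bidualisation'' hides the necessary verification that the natural map $\exprod^{d-s}_R X^\ast \to \big(\bidual^{d-s}_R X\big)^\ast$ has pseudo-null cokernel; the paper achieves this by invoking the fact that over a one-dimensional reduced Gorenstein local ring every finitely generated torsion-free module is reflexive, and without this ingredient the image of $\Phi \mapsto \Phi(a)$ cannot be identified with $\Fitt^0_R(Z)^{\ast\ast}$ merely from the injectivity of the evaluation map and the reflexivity of its image.
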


\begin{proofbox}
Part (a) is a result of Sakamoto \cite[Lem.\@ B.12]{Sakamoto20}. If $r = s$, part (b) is clear and we may therefore assume that $r - s \geq 1$.
The exact sequence from (a) implies that (b) will follow once we have proved that for every $k \in \{1, \dots, s \}$ the composition
\[
\exprod^r_R Y \stackrel{\varphi}{\longrightarrow} \exprod^{r - s}_R Y \stackrel{\varphi_k}{\longrightarrow} \exprod^{r - s - 1}_R Y 
\]
is zero. It now suffices to observe that this composition agrees with the map $\varphi_k \wedge \varphi$ and therefore vanishes because (\ref{wedge-morphism}) is a morphism. \medskip \\
 As for (c), the proof of \cite[Prop.\@ A.2\,(ii)]{EulerSystemsSagaI}
shows that
\begin{equation} \label{evaluation-ideal}
\left\{ f (a) \mid a \in \im \varphi, \; f \in \exprod^{r - s}_R Y^\ast \right\} = \Fitt^{0}_R (Z).
\end{equation}
The inclusion $\bidual^{r - s}_R X \hookrightarrow \exprod^{r - s}_R Y$ induces a restriction map $
\big ( \exprod^{r - s}_R Y \big)^\ast \to \big ( \bidual^{r - s}_R X \big)^\ast$ that fits into the commutative diagram
\begin{cdiagram}
\exprod^{r - s}_R Y^\ast \arrow{r} \arrow{d}{\simeq} & \exprod^{r - s}_R X^\ast \arrow{d} \\
\big ( \exprod^{r - s}_R Y \big)^\ast \arrow{r} & \big ( \bidual^{r - s}_R X \big)^\ast,
\end{cdiagram}
where the top arrow is also induced by restriction and the vertical arrows are the respective canonical maps of the form (\ref{wedge-morphism}). 
If $f \in \exprod^{r - s}_R Y^\ast$ and $a \in \im \varphi$, then, since $\im \varphi \subseteq \bidual^{r - s}_R X$, the above diagram implies that the value $f (a)$ coincides with $a$ evaluated at the image of $f$ in $\exprod^{r - s}_R X^\ast$.\\
We remind the reader that for any $x \in \bidual^{r - s}_R X$ and $f \in \exprod^{r - s}_R X^\ast$, the value $f(x) \in R$ is given by $x (f)$, where $x$ is regarded, true to its definition, as a map $\exprod^{r - s}_R X^\ast \to R$. The above discussion therefore shows that the set (\ref{evaluation-ideal}) is contained 
in $\{ f (a) \mid a \in \im \varphi, \; f \in \exprod^{r - s}_R X^\ast \}$.\\
To show that this inclusion is a pseudo-isomorphism we claim that the cokernel of the morphism
\[
\exprod^{r -s}_R Y^\ast \to \exprod^{r - s}_R X^\ast
\]
is pseudo-null. 
It suffices to show that $\coker \{ Y^\ast \to X^\ast \} = \Ext^1_R (\im  \textstyle{\bigoplus_{i = 1}^s} \varphi_i, R)$ is pseudo-null. To do this, fix a prime ideal $\p \in \Spec(R)$ of height $\text{ht } \p \leq 1$. Then 
\[ 
 \Ext^1_R \big (\im  \textstyle{\bigoplus_{i = 1}^s} \varphi_i, \; R \big)_\p = \Ext^2_{R_\p} (Z_\fp, R_\p) = 0 
\] 
since, by assumption, $R$ satisfies ($G_1$) whence $R_\p$ has injective dimension 1.
Consequently, any $f \in \exprod^{r - s}_{R_\p} X_\p^\ast$ can be lifted to an element $g \in \exprod^{r - s}_{R_\p} Y^\ast_\p$. This shows that
\[
\{ f (a) \mid a \in \im \varphi, \; f \in \exprod^{r - s}_R X^\ast \}_\p = 
\{ f (a) \mid a \in (\im \varphi)_\p, \; f \in \exprod^{r - s}_R X_\p^\ast \}
\]
is contained in the localisation of the set (\ref{evaluation-ideal}) at $\p$. \medskip \\
To prove (d), we note that the assumption that $R$ be reduced implies that the the total ring of fractions $Q$ identifies with a product of fields and is therefore a semi-simple, semi-local ring. Since $Z$ is torsion, we get a short exact sequence
\begin{cdiagram}
0 \arrow{r} & Q \otimes_R X \arrow{r} & Q \otimes_R Y \arrow{r} & Q^{\oplus s} \arrow{r} & 0
\end{cdiagram}
and this implies that $Q \otimes_R X$ is a projective $Q$-module of constant rank $d - s$, hence is $Q$-free of rank $d - s$. It follows that
\begin{equation} \label{bidual-over-Q-free}
Q \otimes_R \Big ( \bidual^{d - s}_R X \Big) \cong \exprod^{d - s}_Q ( Q \otimes_R X) \cong Q 
\end{equation}
is $Q$-free of rank one. As we are taking $r =d$, the module $\im \varphi$ is the image of a free $R$-module of rank one, hence generated over $R$ by an element $a \in \im \varphi$, say. If $a$ was a zero-divisor, then the set (\ref{evaluation-ideal}) would consist of zero-divisors. However, we have
\[
Q \otimes_R \Fitt_R^0 (Z) = \Fitt^0_Q ( Q \otimes_R Z ) = Q
\]
and so this cannot be the case. It follows that $a$ generates a free $R$-submodule of rank one of $\bidual^{d - s}_R X$ and this combines with (\ref{bidual-over-Q-free}) to imply that the quotient $\faktor{\big ( \bidual^{d - s}_R X \big )}{R \cdot a}$ is torsion. We deduce that the map
\[
\Big ( \bidual^{d - s}_R X \Big)^\ast \to (R \cdot a)^\ast \cong R
\]
is injective. In other words, the map
\[
\mathrm{ev} \: \Big ( \bidual^{d - s}_R X \Big)^\ast \to R, \quad \Phi \mapsto \Phi (a)
\]
is injective. In particular, the image of $\mathrm{ev}$ is a reflexive ideal of $R$ and therefore uniquely determined by its localisations at height-one primes of $R$ (see \cite[Lem.\@ C.13]{Sakamoto20}). Given this, part (d) will follow from (c) once we have demonstrated that the natural map
\begin{equation} \label{surjectivity-display}
\exprod^{d - s}_R X^\ast \to \Big ( \bidual^{d - r}_R X \Big)^\ast, \quad
x \mapsto \{ f \mapsto f (x) \} 
\end{equation}
has pseudo-null cokernel. To this end, let $\p \subseteq R$ be a prime of height at most one and $M$ a finitely generated $R_\p$-module. If $M_\tor$ denotes the $R_\p$-torsion submodule, then we have a commutative diagram
\begin{cdiagram}
0 \arrow{r} & M_\tor \arrow{d} \arrow{r} & M \arrow{r} \arrow{d} & \faktor{M}{M_\tor} \arrow{d} \arrow{r} & 0 \\
& 0 \arrow{r} & M^{\ast \ast} \arrow{r} &  \big ( \faktor{M}{M_\tor} \big)^{\ast \ast}.
\end{cdiagram}
The claim therefore follows by combining this diagram with the the fact that over $R_\fp$, which is a reduced Gorenstein ring of Krull dimension one, any finitely generated torsion-free module is reflexive (see \cite[Thm. (6.2)\,(4)]{Bass}, note also the comparison of the notions \textit{torsion-free} and \textit{torsion-less} in \cite[Thm.\@ (A.1)]{Vasconcelos68}).  
\end{proofbox}

\markboth{Higher-rank universal norms}{The set-up}
\section{Higher-rank universal norms}

\subsection{The set-up}\label{set-up-section}\pagestyle{default}
Fix an odd prime $p$ and let $K$ be a number field with $G_K$ its absolute Galois group. We write $S_\infty(K)$ for the set of archimedean places of $K$, and $S_p(K)$ for the set of $p$-adic places of $K$.
Given a Galois extension $F| K$ we write $S_\ram(F|K)$ for the places of $K$ that ramify in $F$ and $S_\mathrm{split}(F|K)$ for the places of $K$ that split completely in $F$. 
If $S$ is a set of places of $K$, we denote by $S(F)$ the union $S \cup S_\ram(F/K)$ and $S_F$ the set of places of $F$ that lie above those contained in $S$. We will, however, often omit the explicit reference to these fields in case it is clear from the context. For example, $\bigO_{F, S}$ shall denote the ring of $S(F)_F$-integers of $F$. 
\medskip \\
Given any commutative unital ring $R$ we write $Q(R)$ for the total quotient ring of $R$; that is to say, the localisation of $R$ at the multiplicative set of non-zero-divisors. If $M$ is an $R$-module, then we denote $M^\lor := \Hom_{R}(M, Q(R)/R)$.

For an abelian group $A$ we denote by $A_\tor$ its torsion-subgroup and by $A_\tf = \faktor{A}{A_\tor}$ its torsion-free part.
If $A$ is finite, we denote by $\widehat{A} = \Hom_\Z (A, \C^\times)$ its character group, and for any $\chi \in \widehat{A}$ we let
\[
e_\chi = \frac{1}{|A|} \sum_{\sigma \in A} \chi (\sigma) \sigma^{-1} \quad \in \C [A]
\]
be the usual primitive orthogonal idempotent associated to $\chi$.
\\

Let $\mathcal{Q}$ be a finite extension of $\Q_p$ with ring of integers $\mathcal{R}$. We also establish the following objects and notations: 
\begin{itemize}
\item $L | K$ a finite abelian extension of number fields with Galois group $\cG$ in which every archimedean place splits completely,
\item $L_\infty | L$ a $\Z_p$-extension in which no non-archimedean place splits completely and such that the extension $L_\infty | K$ is Galois and has Galois group $\Gamma \times \cG$, where $\Gamma = \gal{L_\infty}{L} \cong \Z_p$, 
\item $\Gamma^n = \gal{L_\infty}{L_n}$ the unique subgroup of $\Gamma$ of index $p^n$, and $\Gamma_n = \faktor{\Gamma}{\Gamma^n}$, 
\item $\cG_n = \gal{L_n}{K}$,
\item $\Lambda = \mathcal{R} \llbracket \Gamma \rrbracket = \varprojlim_n \cR [\Gamma_n]$ the Iwasawa algebra and $\bLambda = \mathcal{R} \llbracket \gal{L_\infty}{K} \rrbracket = \varprojlim_n \cR [\cG_n]$ its equivariant counterpart. Due to our assumptions, we have a decomposition $\bLambda = \Lambda [\cG]$. 
\end{itemize}

We now fix a $p$-adic representation $T$ with coefficients in $\cR$. That is to say, a free $\cR$-module endowed with a continuous $G_K$-action that we regard as a sheaf on the \'etale site of $\Spec K$. Assume that the set $S_{\ram}(T)$ of places of $K$ at which $T$ has bad reduction is finite. We then fix a finite set $S$ of places of $K$ containing
\begin{align*}
    S_\infty(K) \cup S_p(K) \cup S_\ram(T).
\end{align*}
Let $T^\ast (1) := \Hom_\cR (T, \cR(1))$, where $\cR (1) = \cR \otimes_{\Z_p} \Z_p (1)$. 

Given a $\bLambda$-module $M$ we write $M^\#$ for the $\bLambda$-module which has the same underlying $\Lambda$-module structure as $M$ but with the $\cG$-action twisted by the involution $g \mapsto g^{-1}$ for $g \in \cG$. Similarly, if $\gamma$ is a topological generator of $\Gamma$, then we write $M^\circ$ for the $\bLambda$-module which has the same underlying $\cR[\cG]$-module structure as $M$ but with the $\Gamma$-action twisted by the involution $\gamma \mapsto \gamma^{-1}$.

\paragraph{$\Sigma$-modified \'etale cohomology}
In this article it is necessary to slightly modify the usual compactly supported \'etale cohomology complex of the representation $T$ in order to ensure that the cohomology in the lowest degree is $\cR$-torsion free. This should, however, be regarded as a convenient technical device rather than an integral feature of the theory since in many interesting cases it can actually be disregarded (see the Examples \ref{RepExamples}). We first briefly recall the definitions of the relevant complexes from \cite[\S 2.3]{EulerSystemsSagaI}.  \\

Let $\Sigma$ be a finite set of places of $K$ that is disjoint from $S(L_0)$. For any place $w \in \Sigma_{L_n}$ denote by $\kappa_w$ the residue field of $\bigO_{L_n, S}$ at $w$. Then we define the \emph{$\bm{\Sigma}$-modified \'etale cohomology complex} of $T$ to be
\[
\text{R} \Gamma_\Sigma (\bigO_{L_n, S}, T) := \cone \Big \{ \text{R} \Gamma_{\et} ( \bigO_{L_n, S}, T) \to \bigoplus_{w \in \Sigma_{L_n}} \text{R} \Gamma_{\et} ( \kappa_w, T) \Big \} [-1]
\]
and set $H^i_\Sigma (\bigO_{L_n, S(L_n)}, T) := H^i (\text{R} \Gamma_\Sigma (\bigO_{L_n, S(L_n)}, T))$ for all $i \in \Z$. 
 We shall fix such a choice of $\Sigma$ for the remainder of this article. For any $i \in \Z$, the \emph{$\bm{\Sigma}$-modified Iwasawa cohomology} of $T$ with respect to the $\Z_p$-extension $L_\infty$ is defined as 
\[
H^i_{\Sigma, \text{Iw}} (\bigO_{L, S}, \; T) = \varprojlim_{n \in \N} H^i_\Sigma(\cO_{L_n, S}, T),
\]
and this limit can be naturally endowed with the structure of a $\bLambda$-module. \\

Throughout this article we suppose, unless explcitly stated otherwise, that the tuple $(T,L_\infty, \Sigma)$ satisfies the following mild hypotheses:
\begin{hypotheses}\label{main-hypothesis}\text{}
    \begin{enumerate}[label=(\arabic*)]
        \item{For every $n \in \NN$ one has that the module of invariants $H^0_\Sigma(L_n, T)$ vanishes.}
        \item{The $\cR$-free module $Y_K(T) = \bigoplus_{v \in S_\infty(K)} H^0(K_v,T^*(1))$ has non-zero rank $r_T$ (which we may often refer to as the \emph{basic rank} of the tuple $(T, L_\infty, \Sigma)$).}
        \item{$H^1_\Sigma(\cO_{L_n, S}, T)$ is $\cR$-torsion-free for every $n \in \N_0$}.
        \item{$H^2_{\Sigma,\Iw}(\cO_{L,S},T)$ is a torsion $\Lambda$-module.}
    \end{enumerate}
\end{hypotheses}

\begin{remark} \label{HypothesesRemark}\phantom{empty}
Hypothesis \ref{main-hypothesis}(4) is (the $\Sigma$-modified version of) the weak Leopoldt conjecture for $p$-adic representations due to Perrin-Riou \cite[\S\,1.3]{PR95}. 
        In fact, in Lemma \ref{mu-vanishing-independent-lemma} below we show that, under additional mild conditions on the tuple $(T,L_\infty, \Sigma)$, this hypothesis is independent of the choice of $\Sigma$ and is thus equivalent to requiring that $H^2 ( \gal{L^S}{L_\infty}, T \otimes_{\cR} \faktor{\cQ}{\cR} (1)) = 0$, where $L^S$ is the maximal Galois extension of $L$ unramified outside $S$ (see, for example, \cite[Prop.\@ 1.3.2]{PR95})).
        The conjecture is known in many cases naturally arising in arithmetic (see \cite[Appendix B]{PR95}) and we shall recall some of these examples below. 
\end{remark}

\begin{bspe1} \label{RepExamples} \phantom{empty}
\begin{liste}
\item     Let $\cR = \ZZ_p$ and $T = \ZZ_p(1)$, then $T$ always satisfies the hypotheses \ref{main-hypothesis}\,(1) and (2). Moreover, for each $n \geq 1$, Kummer theory gives a canonical identification
    \begin{align*}
        H^1_\Sigma(\cO_{L_n, S}, \Z_p (1)) = \Z_p \otimes_\Z \ker \Big \{ 
         \bigO_{L_n, S }^\times \to \bigoplus_{w \in \Sigma_{L_n}} \Big ( \faktor{\bigO_{L_n, S}}{ w}  \Big)^\times
        \Big \}.
    \end{align*}
    The group on the right is the $p$-completion of the $(S , \Sigma)$-unit group $\bigO_{L, S, \Sigma}^\times$ and plays an important role in the context of the Stark conjectures. 
    In particular, if $L$ and $K$ are both totally real, then we may take $\Sigma = \varnothing$.\\
    To see the validity of the Leopoldt conjecture, we recall that there is an exact sequence
    \begin{cdiagram}
    0 \arrow{r} & \varprojlim_n A_{S, \Sigma} (L_n) \arrow{r} & 
    H^2_{\Sigma,\Iw}(\cO_{L,S},\Z_p (1)) \arrow{r} & \varprojlim_n X_{L_n, S \setminus S_\infty (K)} \arrow{r} & 0,
    \end{cdiagram}
    where $A_{S, \Sigma} (L_n)$ denotes the $p$-part of the $S$-ray class group mod $\Sigma$, and $X_{L_n, S \setminus S_\infty (K)}$ is the kernel of the augmentation map $\bigoplus_{w \in (S \setminus S_\infty (K))_{L_n}} \Z_p \to \Z_p$. 
    It is well-known that $\varprojlim_n A_{S, \Sigma} (L_n)$ is $\Lambda$-torsion (see, for example, \cite[Prop.\@ (11.1.4)]{NSW}). Moreover, $\varprojlim_n X_{L_n, S \setminus S_\infty (K)}$ is $\Lambda$-torsion because of our assumption that no finite place of $L$ be split completely in $L_\infty$. The aforementioned exact sequence therefore implies that $H^2_{\Sigma,\Iw}(\cO_{L,S},\Z_p (1))$ is $\Lambda$-torsion as well. 
    \item If $T = \text{T}_p E = H^1_\et ( E_{\overline{\Q}}, \Z_p)^\ast$ is the Tate module of an elliptic curve $E$ defined over $K$, then \ref{main-hypothesis}\,(1) holds, and \ref{main-hypothesis}\,(2) holds because $\text{T}_p E$ is an odd representation (due to the Weil pairing).\\
    If $E ( L)$ is $p$-torsion-free, then we may take $\Sigma = \emptyset$. Indeed, an easy exercise in group cohomology shows that in this case $E (L_n)$ is also $p$-torsion-free for all $n \in \N$ (see, for example, \cite[Prop.\@ (1.6.12)]{NSW}). Moreover, since $S$ contains $S_\ram (T)$, the validity of \ref{main-hypothesis}\,(1) implies that $H^0( \bigO_{L_n, S}, T) = 0$.
    In particular, it follows that
    \[
    H^1 (\bigO_{L_n, S}, T)_\tor \cong H^0 ( \bigO_{L_n, S}, \faktor{(\Q_p \otimes_{\Z_p} T)}{T}) \cong E (L_n) [p^\infty] 
    \]
    and so, for each $n \in \NN$, the module $H^1 ( \bigO_{L_n, S}, T)$ is $\ZZ_p$-torsion-free.\\
    Since every elliptic curve defined over $\Q$ is modular, the validity of the weak Leopoldt conjecture \ref{main-hypothesis}\,(4) for $K = \Q$ follows from \cite[Thm.\@ 12.4\,(i)]{kato}. 
    \item Let $f$ be a normalised cuspidal newform of weight $k \geq 2$ and level $N \geq 5$, and take $\cR$ to be a finite extension of $\Z_p$ that contains the Fourier coefficients of $f$ (using some fixed embedding $\overline{\Q_p} \hookrightarrow \C$). Then one can attach a rational $p$-adic representation $V_f$ of $G_\Q$ to $f$, 
    that is to say a finite dimensional $\mathcal{Q}$-vector space with a continuous $G_\Q$-action,
    see for example \cite{Deligne}. Let $T_f \subseteq V_f$ be a Galois-stable lattice. 
    Since the complex absolute values of the eigenvalues of $\Frob_\q$ for $\q \nmid p N$ are $p^{(k - 1) / 2}$, the representation $T_f$ satisfies hypothesis \ref{main-hypothesis}\,(1). Moreover, the representation $T_f$ is odd, so we have $H^0 (\R, V_f^\ast (1)) \neq 0$ and $T_f$ also satisfies hypothesis \ref{main-hypothesis}\,(2). Finally, $T_f$ satisifies hypothesis \ref{main-hypothesis}\,(4) for $K = \Q$ by \cite[Thm.\@ 12.4\,(i)]{kato}. 
    \end{liste}
\end{bspe1}

Given these definitions, we have the \emph{$\bm{\Sigma}$-modified compactly supported \'etale cohomology complex} 
\[
\text{R}\Gamma_{c, \Sigma} ( \bigO_{L_n, S}, T) = 
\text{R}\Hom_\cR ( \text{R} \Gamma_\Sigma ( \bigO_{L_n, S}, T^\ast (1)), \cR) [-3] \oplus \Big ( \bigoplus_{w \in S_\infty (L_n)} H^0 ( (L_{n})_w, T) \Big ) [-1]
\]
as well as the complex
\begin{align*}
    C_n^\bullet := \text{R}\Hom_\cR(\text{R}\Gamma_{c,\Sigma}(\cO_{L_n, S}, T^*(1)), \cR)[-2].
\end{align*}
Below we record the properties of these constructions that are needed in this article. 

\begin{prop}[\cite{EulerSystemsSagaI}, Prop.\@ 2.22] \label{FiniteComplex}\text{}
\begin{liste}
    \item{$C^\bullet_n$ is acyclic outside degrees zero and one, and is perfect as an element of the derived category $D(\cR [\cG_n])$.}
    \item{There is a canonical isomorphism
        \begin{align*}
            H^0(C^\bullet_n) \cong H^1_{\Sigma}(\cO_{L_n, S}, T)
        \end{align*}
        and a split short exact sequence
        \begin{equation} \label{yoneda-extension-sequence-H2-finite}
        \begin{tikzcd}
           0 \arrow{r} & H^2_\Sigma ( \bigO_{L_n, S}, T) \arrow{r} & H^1 (C^\bullet_n) \arrow{r} & Y_K (T)^\ast \otimes_\cR \cR [\cG_n] \arrow{r} & 0
        \end{tikzcd}
        \end{equation}
        in which the first map is canonical and the second depends on the choice of a set of representatives of the orbits of $\gal{L_n}{K}$ on $S_\infty(L_n)$.
    }
\end{liste}
\end{prop}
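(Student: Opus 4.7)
The strategy rests on the observation that, by its very definition, the complex $\mathrm{R}\Gamma_{c,\Sigma}(\cO_{L_n, S}, T^\ast(1))$ decomposes as a direct sum
\begin{align*}
\mathrm{R}\Hom_\cR(\mathrm{R}\Gamma_\Sigma(\cO_{L_n, S}, T), \cR)[-3] \,\oplus\, \Big(\bigoplus_{w \in S_\infty(L_n)} H^0((L_n)_w, T^\ast(1))\Big)[-1]
\end{align*}
(using the identification $(T^\ast(1))^\ast(1) = T$). Since $\mathrm{R}\Hom_\cR(-, \cR)$ and the shift $[-2]$ both commute with direct sums, the analysis of $C^\bullet_n$ reduces to separately analysing the $\mathrm{R}\Hom_\cR(-,\cR)[-2]$-dual of each summand.

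For the first summand, the plan is to first verify that $\mathrm{R}\Gamma_\Sigma(\cO_{L_n, S}, T)$ is a perfect complex of $\cR[\cG_n]$-modules. This follows from its defining triangle together with the standard perfectness of the ordinary \'etale cohomology complex over $\cR[\cG_n]$ and of the finite-residue-field cohomology complexes. Double duality for perfect complexes then implies that the $\mathrm{R}\Hom_\cR(-,\cR)[-2]$-dual of the first summand is canonically isomorphic to $\mathrm{R}\Gamma_\Sigma(\cO_{L_n, S}, T)[1]$. Hypothesis \ref{main-hypothesis}(1) forces $H^0_\Sigma = 0$, while the vanishing $H^i(\cO_{L_n, S}, T) = 0$ for $i \geq 3$ (valid as $p$ is odd) confines the cohomology to degrees $0$ and $1$, contributing $H^1_\Sigma(\cO_{L_n, S}, T)$ to $H^0(C^\bullet_n)$ and $H^2_\Sigma(\cO_{L_n, S}, T)$ to $H^1(C^\bullet_n)$.

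For the second summand, the key point is the $\cR[\cG_n]$-equivariant identification
\begin{align*}
\bigoplus_{w \in S_\infty(L_n)} H^0((L_n)_w, T^\ast(1)) \;\cong\; Y_K(T) \otimes_\cR \cR[\cG_n].
\end{align*}
This rests on the observation that every archimedean place of $K$ splits completely in $L_n | K$: by assumption such places split completely in $L | K$, and they cannot split further in the $\ZZ_p$-extension $L_\infty | L$, since the decomposition group of an archimedean place is a quotient of $\ZZ/2\ZZ$ while also a closed subgroup of $\ZZ_p$ with $p$ odd. A choice of representatives of the $\cG_n$-orbits on $S_\infty(L_n)$ then provides the identification. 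Using that each $H^0((L_n)_w, T^\ast(1))$ is a free $\cR$-module (as a submodule of the free $\cR$-module $T^\ast(1)$ over a DVR), applying $\mathrm{R}\Hom_\cR(-, \cR)[-2]$ yields $Y_K(T)^\ast \otimes_\cR \cR[\cG_n]$ concentrated in degree $1$ and finitely generated free over $\cR[\cG_n]$.

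Combining these contributions gives the canonical isomorphism $H^0(C^\bullet_n) \cong H^1_\Sigma(\cO_{L_n, S}, T)$ and the short exact sequence for $H^1(C^\bullet_n)$, which is automatically split as a consequence of the direct sum decomposition (with the splitting inherited from the chosen orbit representatives). Perfectness of $C^\bullet_n$ in $D(\cR[\cG_n])$ then follows from the perfectness of each summand. The main technical step I anticipate is the careful verification of perfectness of $\mathrm{R}\Gamma_\Sigma$ over $\cR[\cG_n]$, together with the $\cR[\cG_n]$-equivariance of the archimedean decomposition; while no individual step is substantially difficult, correctly tracking the shifts, dualities, and equivariant structures requires some care.
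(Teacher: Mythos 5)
This proposition is stated with a citation to \cite{EulerSystemsSagaI}, Prop.~2.22, and the paper does not supply its own proof, so there is no in-text argument to compare against. Your reconstruction from the definitions given in \S\,\ref{set-up-section} is correct: the decomposition of $\text{R}\Gamma_{c,\Sigma}(\cO_{L_n,S}, T^*(1))$ into the \'etale and archimedean summands, the bi-duality $A \stackrel{\simeq}{\to} \text{R}\Hom_\cR(\text{R}\Hom_\cR(A,\cR),\cR)$ for the perfect complex $\text{R}\Gamma_\Sigma(\cO_{L_n,S},T)$, the shift bookkeeping giving $\text{R}\Gamma_\Sigma[1]$ and the degree-one placement of $Y_K(T)^\ast\otimes_\cR\cR[\cG_n]$, and the use of $H^0_\Sigma = 0$ together with $\operatorname{cd}_p \leq 2$ to confine the cohomology to degrees $0$ and $1$, all check out. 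The identification of the archimedean contribution with $Y_K(T)^\ast\otimes_\cR\cR[\cG_n]$ via the complete splitting of archimedean places (which persists up the $\ZZ_p$-tower for odd $p$) and a choice of orbit representatives, together with the $\cR[\cG_n]$-equivariant isomorphism of type (\ref{hom-isomorphism}), is exactly the right mechanism, and it simultaneously explains the splitting of the sequence (\ref{yoneda-extension-sequence-H2-finite}). This matches the argument one would find in the cited reference.
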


Next we introduce the Iwasawa-theoretic variants of the above constructions.\medskip \\ 
Denote by $C^\bullet_\infty$ the complex of $\bLambda$-modules $R\varprojlim_n C^\bullet_n$ taken with respect to the natural codescent morphisms $\phi_n \: C^\bullet_n \to C^\bullet_{n-1}$ (see, for example, \cite[Lem.\@ 7.1\,(v)]{BuSaNC}). Here $R\varprojlim_n$ refers to the homotopy limit taken in the triangulated category $D(\bLambda)$ (which is unique up to non-unique isomorphism) and is defined so as to fit in an exact triangle
\begin{equation}\label{holim-triangle}
\begin{tikzcd}
    R\varprojlim_n C^\bullet_n \arrow{r} &  \prod_{n} C^\bullet_n \arrow{rr}{(1-\phi_n)_n} & &  \prod_n C^\bullet_n \arrow{r} & \phantom{X}.
\end{tikzcd}
\end{equation}
 We then have the following analogue of Proposition \ref{FiniteComplex}.

\begin{prop}
\begin{liste}
    \item{$C^\bullet_\infty$ is acyclic outside degrees zero and one, and is perfect as an element of the derived category $D(\bLambda)$.}
    \item{There is a canonical isomorphism
        \begin{align*}
            H^0(C^\bullet_\infty) \cong H^1_{\Sigma, \Iw}(\cO_{L,S}, T)
        \end{align*}
        and a split short exact sequence
    \begin{equation} \label{yoneda-extension-sequence-H2}
        \begin{tikzcd}[column sep=small] 
            0 \arrow{r} & H^2_{\Sigma,\Iw}(\cO_{L, S}, T) \arrow{r} & H^1(C^\bullet_\infty) \arrow{r} & \varprojlim_n (Y_K(T)^* \otimes_\cR \cR[\cG_n]) \arrow{r} & 0,
        \end{tikzcd}
        \end{equation}
        where the injection is canonical and the surjection depends on a choice of a set of representatives of the orbits of $\gal{L_\infty}{K}$ on $S_\infty(L_\infty)$.
    }
\end{liste}
\end{prop}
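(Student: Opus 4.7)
The plan is to deduce both parts from the long exact cohomology sequence associated with the homotopy limit triangle (\ref{holim-triangle}). Since products are exact in the category of $\bLambda$-modules, the cohomology of the product of complexes agrees with the product of cohomologies, and the endomorphism induced by $1 - (\phi_n)_n$ on these products has kernel computing $\varprojlim_n$ and cokernel computing $\varprojlim^1_n$. The triangle thus produces, for each $i \in \ZZ$, a short exact sequence
\begin{equation*}
    0 \to \varprojlim\nolimits^1_n H^{i-1}(C^\bullet_n) \to H^i(C^\bullet_\infty) \to \varprojlim\nolimits_n H^i(C^\bullet_n) \to 0.
\end{equation*}
Since $H^i(C^\bullet_n) = 0$ for $i \notin \{0, 1\}$ by Proposition \ref{FiniteComplex}\,(a), this already forces $H^i(C^\bullet_\infty) = 0$ for $i < 0$ and $i > 2$; the remaining acyclicity at $i = 2$ and the identification of $H^0(C^\bullet_\infty)$ with $\varprojlim_n H^1_\Sigma(\cO_{L_n, S}, T) = H^1_{\Sigma, \Iw}(\cO_{L, S}, T)$ therefore reduce to verifying that $\varprojlim^1_n H^0(C^\bullet_n) = 0 = \varprojlim^1_n H^1(C^\bullet_n)$.

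Both of these vanishing statements follow from the Mittag-Leffler condition on the respective inverse systems. Using the splittings (\ref{yoneda-extension-sequence-H2-finite}), the system $\{H^1(C^\bullet_n)\}$ decomposes as $\{H^2_\Sigma(\cO_{L_n, S}, T)\} \oplus \{Y_K(T)^* \otimes_\cR \cR[\cG_n]\}$. The second summand has surjective transition maps (the canonical surjections of group rings) and is trivially Mittag-Leffler; for the first summand, each $H^2_\Sigma(\cO_{L_n, S}, T)$ is a finitely generated $\cR[\cG_n]$-module, hence compact as a topological $\cR$-module, so that the inverse system of these compact modules is automatically Mittag-Leffler (Hypothesis \ref{main-hypothesis}(4) moreover guarantees that the resulting limit is a torsion $\Lambda$-module, as is exploited elsewhere in the paper). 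The same compactness argument applies to the system $\{H^0(C^\bullet_n)\} = \{H^1_\Sigma(\cO_{L_n, S}, T)\}$, with Hypothesis \ref{main-hypothesis}(3) providing the additional $\cR$-torsion-freeness that makes the identification with a genuine lattice clean. Consequently $H^1(C^\bullet_\infty) \cong \varprojlim_n H^1(C^\bullet_n)$, and the split short exact sequence at degree one is obtained by passing to the inverse limit in (\ref{yoneda-extension-sequence-H2-finite}): choosing a compatible system of orbit representatives for the action of $\gal{L_\infty}{K}$ on $S_\infty(L_\infty)$ makes the splittings compatible under codescent, so that they assemble into a splitting in the limit.

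For perfectness, the plan is to pick, for each $n$, an explicit two-term complex $P^0_n \to P^1_n$ of finitely generated projective $\cR[\cG_n]$-modules representing $C^\bullet_n$, with the choices made compatibly along the codescent maps $\phi_n$. Passing to the inverse limit then yields a two-term complex of finitely generated projective $\bLambda$-modules which, thanks to the vanishing of the $\varprojlim^1$ terms established above, is quasi-isomorphic to $C^\bullet_\infty$. The main technical obstacle lies in arranging the compatibility of these projective presentations along the tower—a point addressed in \cite[Lem.\@ 7.1]{BuSaNC}—and this is the step we would invoke as a black box.
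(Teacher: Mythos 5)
Your proposal is essentially correct and follows the same backbone as the paper's argument: the Milnor short exact sequence
\begin{equation*}
0 \to \varprojlim\nolimits^1_n H^{i-1}(C^\bullet_n) \to H^i(C^\bullet_\infty) \to \varprojlim\nolimits_n H^i(C^\bullet_n) \to 0
\end{equation*}
coming from the homotopy-limit triangle, the vanishing of the $\varprojlim^1$ terms, and then inheritance from Proposition \ref{FiniteComplex}. Two points where you diverge slightly. First, your decomposition of the system $\{H^1(C^\bullet_n)\}$ via the level-$n$ splittings is superfluous (and in fact would require you to first check that the splittings can be chosen compatibly along the tower before the decomposition of \emph{inverse systems} is valid). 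The paper bypasses this: each $H^i(C^\bullet_n)$ is a finitely generated $\cR[\cG_n]$-module, hence compact Hausdorff, and the derived limit of compact Hausdorff abelian groups vanishes — one uniform argument handles everything. You actually invoke this same compactness for both pieces anyway, so you could have dropped the decomposition entirely. Second, for perfectness the paper simply cites \cite[Prop.\@ 1.6.5\,(2)]{fukaya-kato}, whereas you propose constructing compatible two-term projective representatives at each finite level and taking their limit. That is a legitimate alternative and is in fact exactly the content of \cite[Lem.\@ 7.10]{BuSaNC} (rather than \cite[Lem.\@ 7.1]{BuSaNC}, which concerns the codescent morphisms), a result the paper invokes immediately afterwards in Lemma \ref{standard-representative-lemma}; so your route anticipates the tooling the paper needs anyway, at the cost of front-loading the compatibility issue that the Fukaya--Kato citation sidesteps.
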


\begin{proof}
    At the outset we remark that an analysis of the long exact sequence of cohomology of the triangle (\ref{holim-triangle}) yields, for each $i \in \ZZ$, an exact sequence
    \begin{cdiagram}
         0 \arrow[r] &\textstyle\varprojlim_n^{(1)} H^{i-1}(C_n^\bullet) \arrow[r] & H^i(C_\infty^\bullet) \arrow[r] & \varprojlim_n H^i(C_n^\bullet) \arrow[r] &0
    \end{cdiagram}
    where the limits are taken with respect to the morphisms induced by $\phi_n$. In particular, since each $H^i(C^\bullet_n)$ is finitely generated as a $\cR[\cG_n]$-module it can be endowed with the structure of a compact Hausdorff space. As such, the left-hand derived limit vanishes. Given this, the first claim of (a) and both claims of (b) follow immediately from Proposition \ref{FiniteComplex}. The fact that $C_\infty^\bullet$ is a perfect complex is proven in \cite[Prop.\@ 1.6.5\,(2)]{fukaya-kato}.
\end{proof}

\begin{lemma}\label{standard-representative-lemma}
    There exists a quadratic standard representative $[\Pi \xrightarrow{\psi} \Pi]$ (in the sense of \cite[Def.\@ A.6]{EulerSystemsSagaI}) of the complex $C_\infty^\bullet$ with respect to the surjection $H^1(C_\infty^\bullet) \xrightarrow{f} Y_K(T)^* \otimes_\cR \bLambda$. 
\end{lemma}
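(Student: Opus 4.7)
The plan is to first obtain a representative of $C_\infty^\bullet$ by a two-term complex of finitely generated free $\bLambda$-modules concentrated in degrees $0$ and $1$, then to verify that the two terms can be chosen to have equal rank, and finally to arrange the representative so that the prescribed surjection $f$ is visible in the presentation.

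First, by the preceding proposition, $C_\infty^\bullet$ is a perfect complex acyclic outside degrees $0$ and $1$, so it admits a representative by a two-term complex $[P^0 \xrightarrow{d} P^1]$ of finitely generated projective $\bLambda$-modules. Since $\bLambda = \Lambda[\cG]$ is a commutative semi-local Noetherian ring (being a finite algebra over the complete local ring $\Lambda$), finitely generated projective $\bLambda$-modules of constant rank are free; by adding trivial summands of the form $[\bLambda \xrightarrow{\id} \bLambda]$ as needed, I may assume that $P^0$ and $P^1$ are free.

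Next, I would verify that $\rank_\bLambda(P^0) = \rank_\bLambda(P^1)$. Since $C_\infty^\bullet$ is perfect and $Q(\bLambda)$ is flat over $\bLambda$, one has
\[
\rank_\bLambda(P^1) - \rank_\bLambda(P^0) = \rank_\bLambda(H^1(C_\infty^\bullet)) - \rank_\bLambda(H^0(C_\infty^\bullet)).
\]
From the exact sequence (\ref{yoneda-extension-sequence-H2}) and the $\bLambda$-torsion property of $H^2_{\Sigma,\Iw}$ furnished by Hypothesis \ref{main-hypothesis}(4), one finds $\rank_\bLambda(H^1(C_\infty^\bullet)) = r_T$. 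On the other hand, Hypothesis \ref{main-hypothesis}(1) gives $H^0_{\Sigma,\Iw} = 0$, and so the global Euler--Poincaré characteristic formula for the Iwasawa tower $L_\infty/L$, combined again with Hypothesis \ref{main-hypothesis}(4), yields $\rank_\bLambda(H^1_{\Sigma,\Iw}) = r_T$, which is the rank of $H^0(C_\infty^\bullet)$. Hence the Euler characteristic vanishes, so $P^0$ and $P^1$ can be taken to be a common free $\bLambda$-module $\Pi$, producing the desired shape $[\Pi \xrightarrow{\psi} \Pi]$.

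Finally, to make this representative \emph{standard} with respect to $f$ in the sense of \cite[Def.\@ A.6]{EulerSystemsSagaI}, I would pre-compose $f$ with the tautological surjection $\Pi \twoheadrightarrow \coker(\psi) = H^1(C_\infty^\bullet)$ to produce a $\bLambda$-linear surjection $\Pi \twoheadrightarrow Y_K(T)^* \otimes_\cR \bLambda$ onto a free module of rank $r_T$; the splitness of this surjection, which is part of the standard representative data, follows from that of (\ref{yoneda-extension-sequence-H2}) together with the projectivity of $\Pi$. I expect the main obstacle to be the rank equality in the second step, which depends critically on the weak Leopoldt-type Hypothesis \ref{main-hypothesis}(4) and the careful application of the global Euler--Poincaré formula at the Iwasawa level.
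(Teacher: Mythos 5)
The paper's proof of this lemma is essentially a citation to \cite[Lem.\@ 7.10]{BuSaNC}, so your argument is a genuine attempt to reconstruct the underlying proof rather than a re-derivation of what the paper writes. The overall skeleton (two-term projective representative $\Rightarrow$ Euler-characteristic computation $\Rightarrow$ rank equality $\Rightarrow$ basis adapted to $f$) is a reasonable route, and the final splitting argument is correct in outline. However, there are several gaps that prevent the sketch from being a complete proof as written.

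First, the assertion that a perfect complex acyclic outside degrees $0$ and $1$ ``admits a representative by a two-term complex $[P^0 \to P^1]$ of finitely generated projectives'' is not automatic. Brutal truncation from above works (if $H^i=0$ for $i>1$, the top differential is a split surjection), but truncating from below produces a cokernel that is only guaranteed to have finite projective dimension, not to be projective. To obtain the two-term representative one must also verify that $\mathrm{R}\Hom_\bLambda(C^\bullet_\infty, \bLambda)$ is acyclic in degrees $>0$, which in this setting comes down to the vanishing of $H^3_{c,\Sigma}(\cO_{L,S},T^*(1))$, i.e.\@ to Hypothesis \ref{main-hypothesis}\,(1) and Artin--Verdier/Poitou--Tate duality. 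This justification is missing.

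Second, the invocation of ``the global Euler--Poincar\'e characteristic formula for the Iwasawa tower'' is left entirely implicit, and the formula is not a triviality here: the complex $C^\bullet_\infty$ is a shifted $\cR$-linear dual of a compactly supported $\Sigma$-modified complex, and one has to feed through the shift, the auxiliary archimedean summand in the definition of $\mathrm{R}\Gamma_{c,\Sigma}$, and the distinction between real and complex places of $K$ to see that the alternating sum of $Q(\bLambda)$-ranks of $H^0(C^\bullet_\infty)$ and $H^1(C^\bullet_\infty)$ is indeed zero. A careless application of the Tate formula for $\mathrm{R}\Gamma(\cO_{L_n,S},T)$ (rather than the correct one incorporating the complex places) produces a nonzero Euler characteristic. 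You should state precisely which Euler characteristic formula is being used and verify it returns exactly $r_T$ for the rank of $H^0(C^\bullet_\infty)=H^1_{\Sigma,\Iw}(\cO_{L,S},T)$.

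Third, the passage from ``Euler characteristic vanishes'' to ``$P^0$ and $P^1$ can be taken to be a common free module $\Pi$'' is slightly too quick over the semi-local but non-local ring $\bLambda = \bigoplus_\chi \Lambda_\chi$: adding summands $[\bLambda \xrightarrow{\id} \bLambda]$ preserves the component-wise rank differences, so one needs that the Euler characteristic vanishes on each component, not merely globally. This does hold, because both cohomology groups have constant $Q(\bLambda)$-rank $r_T$ (since $Y_K(T)^\ast\otimes_\cR\bLambda$ is $\bLambda$-free of rank $r_T$), but the point deserves to be made explicit; it is also precisely what is needed later to deduce that $\Pi' := \ker(\Pi \twoheadrightarrow Y_K(T)^*\otimes_\cR\bLambda)$ is free (projective of constant rank over a semi-local ring), which your final paragraph implicitly requires in order to extend a basis of $Y_K(T)^\ast\otimes_\cR\bLambda$ to one of $\Pi$.
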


\begin{proof}
    By definition we are required to exhibit a representative $[\Pi \xrightarrow{\psi} \Pi]$ of $C_\infty^\bullet$ in $D(\bLambda)$ with the property that for the free module $\Pi$ there exists a basis $\{b_1,\dots, b_d\}$ of $\Pi$ and an exact sequence
    \begin{align*}
        \langle b_{r_T+1}, \dots, b_d\rangle_{\bLambda} \to H^1(C_\infty^\bullet) \xrightarrow{f} Y_K(T)^* \otimes_\cR \bLambda \to 0
    \end{align*}
    where the first map is induced by the natural map $\Pi \to H^1(C_\infty^\bullet)$. This is proved in
    \cite[Lem.\@ 7.10]{BuSaNC} (where the complex $C_\infty^\bullet$ is denoted $C_{L_\infty, S(L_0)}(T)$ in \textit{loc.\@ cit.\@}).
\end{proof}

Fix a representative $[\Pi \xrightarrow{\psi} \Pi]$ of $C_\infty^\bullet$ where $\Pi$ is a free $\bLambda$-module of rank $d$. Then for any given $n \in \NN_0$, the complex $C_n^\bullet$ is represented by $[\Pi_n \xrightarrow{\psi_n} \Pi_n]$ where we write $\Pi_n := \Pi \otimes_{\bLambda} \cR[\cG_n]$ and similarly for $\psi_n$ (see \cite[Lem.\@ 7.10\,(iii)]{BuSaNC}). In particular, we have exact sequences
\begin{align}\label{yoneda-extension-sequence}
& \begin{tikzcd}[ampersand replacement=\&]
0 \arrow{r} \& H^1_\Sigma(\cO_{L_n, S}, T) \arrow{r}{\phi_n} \& \Pi_n \arrow{r}{\psi_n}  \& \Pi_n
\arrow{r} \& H^1 (C^\bullet_n) \arrow{r} \& 0
.
\end{tikzcd} \\
\label{yoneda-extension-sequence-Iwasawa}
& \begin{tikzcd}[ampersand replacement=\&]
0 \arrow{r} \& H^1_{\Sigma, \Iw}(\cO_{L, S}, T) \arrow{r}{\phi} \& \Pi \arrow{r}{\psi}  \& \Pi
\arrow{r} \& H^1 (C^\bullet_\infty) \arrow{r} \& 0
.
\end{tikzcd}
\end{align}
Let $\{b_1, \dots, b_d \}$ be the $\bLambda$-basis of $\Pi$ chosen in the proof of Lemma \ref{standard-representative-lemma} and, for each $i \in \{1, \dots, d\}$ write $b_i^\ast \in \Pi^\ast$ for the dual of $b_i$. By construction, the $\bLambda$-free module $Y_K (T)^\ast \otimes_\cR \bLambda$ can then be identified with the direct summand $\bigoplus_{i = 1}^{r_T} \bLambda b_i$ of $\Pi$. It follows that we also have the exact sequence
\begin{align}\label{yoneda-extension-sequence-Iwasawa-modified}
& \begin{tikzcd}[ampersand replacement=\&, column sep=small]
0 \arrow{r} \& H^1_{\Sigma, \Iw}(\cO_{L, S}, T) \arrow{r} \& \Pi \arrow{rrr}{\bigoplus_{i = r_T  + 1}^d\psi_i}  \& \& \&  \bLambda^{\oplus (d - r_T)}
\arrow{r} \& H^2_{\Sigma, \Iw} (\bigO_{L, S}, T) \arrow{r} \& 0
,
\end{tikzcd}
\end{align}
where we have set $\psi_i = b_i^\ast \circ \psi$ for each $i \in \{ r_T + 1, \dots, d\}$.

\subsection{The structure of universal norms in higher rank}

In this section we prove a number of basic results about higher rank universal norms. In doing so, we will heavily rely on the notion of \textit{exterior biduals} reviewed in the last section.

\paragraph{Definition of universal norms}

Given integers $m \geq n \geq 1$ we have the corestriction maps
\begin{align*}
    \cores_{m,n} \: H^1_\Sigma(\cO_{L_m, S}, T) \to H^1_\Sigma(\cO_{L_n, S}, T).
\end{align*}
If $r \geq 1$, then these maps induce natural maps on the exterior biduals
\begin{align*}
    \cores_{m,n}^r \: \bidual_{\cR[\cG_m]}^r H^1_\Sigma(\cO_{L_m, S}, T) \to \bidual_{\cR[\cG_n]}^r H^1_\Sigma(\cO_{L_n, S}, T).
\end{align*}
in the following way: First observe that the natural codescent map $\Pi_m \to \Pi_n$ restricts to give $\cores_{m, n}$ on $H^1_\Sigma(\cO_{L_m, S}, T)$. With this in mind we then define $\cores_{m,n}^r$ to be the leftmost vertical arrow in the commutative diagram that is obtained by applying Lemma \ref{LittleLemma}\,(a) to the exact sequences (\ref{yoneda-extension-sequence}) for the levels $m$ and $n$:
\begin{cdiagram}
0 \arrow{r} & \bidual_{\cR[\cG_m]}^r H^1_\Sigma(\cO_{L_m, S}, T) \arrow{r} \arrow[dashed]{d} & 
\exprod^r_{\cR [\cG_m]} \Pi_m \arrow{r} \arrow{d} & \Pi_m \otimes_{\cR [\cG_m]} \exprod^{r - 1}_{\cR [\cG_m]} \Pi_m \arrow{d} \\
0 \arrow{r} & \bidual_{\cR[\cG_n]}^r H^1_\Sigma(\cO_{L_n, S}, T) \arrow{r} & 
\exprod^r_{\cR [\cG_n]} \Pi_n \arrow{r} & \Pi_n \otimes_{\cR [\cG_n]} \exprod^{r - 1}_{\cR [\cG_n]} \Pi_n
\end{cdiagram}
We also refer the reader to \cite[Rem.\@ 6.2 and Rem.\@ 6.11]{EulerSystemsSagaII} for more details on this map.

\begin{definition}\label{UN-definition}
    Fix integers $r \in \N$ and $n \in \NN_0$. 
    \begin{liste}
    \item We define the module of \emph{universal norms} of rank $r$ and level $n$ for $T$ to be
    \begin{align*}
        \UN^r_n = \UN^r_n(T,L_\infty) := \bigcap_{m \geq n} \im(\cores_{m,n}^r)
    \end{align*}
    We remark that $\UN^r_n(T,L_\infty)$ can be naturally regarded as an $\cR[\cG_n]$-module.
    \item We define the module of \emph{norm-coherent sequences} of rank $r$ for $T$ to be
    \begin{align*}
        \NS^r = \NS^r(T,L_\infty) := \varprojlim_{n \in \NN} \bidual_{\cR[\cG_n]}^r H^1_\Sigma(\cO_{L_n, S}, T)
        = \bidual^r_\bLambda H^1_{\Sigma, \text{Iw}} ( \cO_{L, S}, T)
    \end{align*}
    where the inverse limit is taken with respect to the maps $\cores_{m,n}^r$ and we have used Lemma \ref{BidualsLimits} for the last identification. We will often use the notations above interchangeably except in cases of ambiguity. We remark that $\NS^r(T,L_\infty)$ can be naturally regarded as a $\bLambda$-module.
    \end{liste}
\end{definition}

\paragraph{The descent isomorphism}
The following is one of the main results of this article.

\begin{thm} \label{UN-structure-theorem}
Fix an integer $1 \leq r \leq r_T$. 
\begin{liste}
\item The module $\NS^r$ is $\Lambda$-free of rank $[L : K] \cdot \binom{r_T}{r}$.
\item{The natural map
            \begin{align*}
                \NS^r \to \bidual_{\cR[\cG_n]}^r H^1_\Sigma(\cO_{L_n, S}, T)
            \end{align*}
            induces an isomorphism of $\cR[\cG_n]$-modules
            \begin{align*}
                \NS^r \otimes_{\bLambda} \cR[\cG_n] \cong \UN_n^r.
            \end{align*}
            In particular, $\UN_n^{r}$ is a free $\cR[\Gamma_n]$-module of rank $[L:K]\cdot{r_T \choose r}$.
        }
	\item There is an exact sequence
	\begin{cdiagram}[column sep=small]
	0 \arrow{r} & \UN^r_n \arrow{r} & \bidual^r_{\cR [\cG_n]} \UN^1_n \arrow{r} & 
	\Ext^1_\bLambda \Big ( \exprod^r_\bLambda H^1_{\Sigma, \Iw} (\cO_{L, S}, T)^\ast, \bLambda \Big)^{\Gamma^n} \arrow{r} & 0.
	\end{cdiagram}
	The module on the right is a finite $p$-group which vanishes if, for example, $p \nmid [L:K]$.
\end{liste}
\end{thm}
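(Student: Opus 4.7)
The strategy leverages the representative $[\Pi \xrightarrow{\psi} \Pi]$ of $C_\infty^\bullet$, Lemmas \ref{LittleLemma}, \ref{BaseChangeLem}, and \ref{RyotarosLemma1}, together with the Gorenstein structure of $\bLambda = \Lambda[\cG]$. I would first tackle part (a). The rank is computed by tensoring (\ref{yoneda-extension-sequence-Iwasawa-modified}) with $Q(\bLambda)$: Hypothesis \ref{main-hypothesis}(4) forces $H^2_{\Sigma,\Iw}(\cO_{L,S}, T)$ to be $\bLambda$-torsion, so $H^1_{\Sigma,\Iw}(\cO_{L,S}, T) \otimes_\bLambda Q(\bLambda) \cong Q(\bLambda)^{r_T}$ and hence $\NS^r \otimes_\bLambda Q(\bLambda) \cong Q(\bLambda)^{\binom{r_T}{r}}$. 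For $\Lambda$-freeness I use that $\bLambda = \Lambda[\cG]$ is a Frobenius algebra over $\Lambda$, giving a canonical bimodule isomorphism $\bLambda \cong \Hom_\Lambda(\bLambda, \Lambda)$. The Hom-tensor adjunction then rewrites $\NS^r = \Hom_\bLambda(\exprod^r_\bLambda H^1_{\Sigma,\Iw}(\cO_{L,S}, T)^*, \bLambda)$ as $\Hom_\Lambda(\exprod^r_\bLambda H^1_{\Sigma,\Iw}(\cO_{L,S}, T)^*, \Lambda)$, i.e.\ as a $\Lambda$-dual and hence $\Lambda$-reflexive. Since $\Lambda$ is a $2$-dimensional regular local ring, any finitely generated reflexive $\Lambda$-module satisfies Serre's condition $(S_2)$, has depth $2$, and is thus $\Lambda$-free by Auslander-Buchsbaum.

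For part (b), the map $\NS^r \to \bidual^r_{\cR[\cG_n]} H^1_\Sigma(\cO_{L_n,S}, T)$ has image in $\UN_n^r$ by construction. I would apply Lemma \ref{BaseChangeLem}(a) with $M = H^1_{\Sigma,\Iw}(\cO_{L,S}, T)$ and the resolution $0 \to \bLambda \xrightarrow{\gamma^{p^n}-1} \bLambda \to \cR[\cG_n] \to 0$. The required vanishing $\Ext^1_\bLambda(H^1_{\Sigma,\Iw}(\cO_{L,S}, T), \bLambda) = 0$ follows from splitting (\ref{yoneda-extension-sequence-Iwasawa-modified}) into short exact sequences, giving an isomorphism with $\Ext^3_\bLambda(H^2_{\Sigma,\Iw}(\cO_{L,S}, T), \bLambda) = 0$ (as $\bLambda$ is Gorenstein of Krull dimension $2$). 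Combining this with the injection from Lemma \ref{RyotarosLemma1} for $H^1_{\Sigma,\Iw}(\cO_{L,S}, T) \otimes_\bLambda \cR[\cG_n] \hookrightarrow H^1_\Sigma(\cO_{L_n, S}, T)$ yields injectivity of $\NS^r \otimes_\bLambda \cR[\cG_n] \hookrightarrow \UN_n^r$, and part (a) identifies the source as $\cR[\Gamma_n]$-free of rank $[L:K]\binom{r_T}{r}$. For surjectivity onto $\UN_n^r$, I would argue that for any $x \in \UN_n^r$ the fibres $(\cores_{m,n}^r)^{-1}(x)$ form an inverse system in which the transition images stabilise by Noetherianness, so a Mittag-Leffler argument produces a compatible norm-coherent lift of $x$.

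Part (c) is a reformulation of the same application of Lemma \ref{BaseChangeLem}(a). The Ext-vanishing above shows that both $(H^1_{\Sigma,\Iw}(\cO_{L,S}, T) \otimes_\bLambda \cR[\cG_n])^*$ and $(\UN_n^1)^* = (\NS^1 \otimes_\bLambda \cR[\cG_n])^*$ coincide with $H^1_{\Sigma,\Iw}(\cO_{L,S}, T)^* \otimes_\bLambda \cR[\cG_n]$, where for the second identification I use that $\NS^1 = H^1_{\Sigma,\Iw}(\cO_{L,S}, T)^{**}$ is reflexive, hence maximal Cohen-Macaulay over the $2$-dimensional Gorenstein ring $\bLambda$, so that $\Ext^1_\bLambda(\NS^1, \bLambda) = 0$ as well. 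Consequently $\bidual^r_{\cR[\cG_n]}(H^1_{\Sigma,\Iw}(\cO_{L,S}, T) \otimes_\bLambda \cR[\cG_n]) \cong \bidual^r_{\cR[\cG_n]} \UN_n^1$, and the claimed exact sequence drops out. For the final assertion, the Ext group is $\bLambda$-pseudo-null (since $H^1_{\Sigma,\Iw}(\cO_{L,S}, T)^*$ is MCM, so its $r$-th exterior power is locally free at all primes of height $\leq 1$), whence its $\Gamma^n$-invariants form a finite $p$-group. When $p \nmid [L:K]$, $\bLambda$ is regular of Krull dimension $2$ and every reflexive module is free, so the Ext group vanishes outright. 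The most delicate steps I anticipate are the Mittag-Leffler argument in (b) and the identification of the middle bidual in (c), both of which require careful control over the pseudo-null defect between $H^1_{\Sigma,\Iw}(\cO_{L,S}, T)$ and its double-dual $\NS^1$.
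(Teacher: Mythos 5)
Your treatment of part (a) is correct but takes a genuinely different route from the paper: you derive $\Lambda$-freeness by identifying $\NS^r$ as a $\Lambda$-dual via the Frobenius algebra isomorphism $\bLambda \cong \Hom_\Lambda(\bLambda, \Lambda)$, then invoke reflexivity over the $2$-dimensional regular local ring $\Lambda$ and Auslander--Buchsbaum. The paper instead works with the degenerating spectral sequence $\Tor_{-i}^\bLambda(\cR[\cG_n], H^j(D^\bullet)) \Rightarrow H^{i+j}(D^\bullet_n)$ to obtain an injection $\NS^r \otimes_\bLambda \cR[\cG_n] \hookrightarrow \bidual^r_{\cR[\cG_n]}H^1_\Sigma(\cO_{L_n,S},T)$, from which $\cR$-freeness of the coinvariants (and hence $\Lambda$-freeness, by \cite[Prop.\@ 5.3.19]{NSW}) follows. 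Your route is slicker but uses more commutative-algebra machinery; the paper's spectral-sequence injection is simultaneously the key tool for part (b). Your $\Ext$-vanishing argument in (b) and (c), $\Ext^1_\bLambda(H^1_{\Sigma,\Iw},\bLambda) \cong \Ext^3_\bLambda(H^2_{\Sigma,\Iw},\bLambda) = 0$ by the Gorenstein property, is correct and essentially matches what the paper does implicitly via Lemma~\ref{BaseChangeLem} and the resolution (\ref{group-ring-resolution}).

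However, there are two genuine gaps. First, in part (b) the surjectivity onto $\UN^r_n$ does not follow from ``Noetherianness'' or a naive Mittag--Leffler argument. The fibres $(\cores^r_{m,n})^{-1}(x)$ are cosets of submodules, not submodules, so stabilisation is not a Noetherian phenomenon; and even for descending chains of actual submodules, Noetherianness gives no stabilisation (it is Artinianness that does). The paper supplies the missing ingredient in Lemma~\ref{compactness-argument}: the modules $\bidual^r_{\cR[\cG_m]}H^1_\Sigma(\cO_{L_m,S},T)$ are compact Hausdorff, so a finite-intersection-property argument lets one construct a norm-coherent lift of any $x \in \UN^r_n$. (Remark~\ref{alternative-proof} gives an alternative algebraic version also resting on compactness, via exactness of $\varprojlim$ for compact modules.)

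Second, in part (c) the pseudo-nullity claim is not justified at singular primes of $\bLambda$. At a singular height-one prime $\fp$ (where $p \in \fp$ and the $p$-Sylow subgroup of $\cG$ is non-trivial), $\bLambda_\fp$ is a one-dimensional Gorenstein local ring that is generally not regular. Here MCM over $\bLambda_\fp$ only means torsion-free, not free, and exterior powers of torsion-free modules over such rings can acquire torsion; so one cannot conclude $\Ext^1_{\bLambda_\fp}(\exprod^r (H^1_{\Sigma,\Iw})^*_\fp, \bLambda_\fp) = 0$ this way. The paper avoids this by a direct $\cR$-rank comparison: using part (b) one shows that $\rank_\cR\bigl(\bidual^r_{\cR[\cG_n]}\UN^1_n\bigr) = [L_n:K]\binom{r_T}{r} = \rank_\cR(\UN^r_n)$, so the cokernel in (c) is a finitely generated $\cR$-torsion (hence finite) module without needing pseudo-nullity of the full $\Ext$-module. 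Your argument for the vanishing when $p \nmid [L:K]$ is fine, matching the paper's appeal to Remark~\ref{structure-theorem-remark}\,(b).
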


\newpage
\begin{rk} \label{structure-theorem-remark}\text{}
\begin{liste}
\item In the case of $r_T = 1$, universal norms have previously been studied by many authors: The first to obtain a result similar to Theorem \ref{UN-structure-theorem} (for $T = \Z_p (1)$) was Kuz'min \cite{Kuzmin};
later Greither \cite{Greither} also gave a proof in the abstract setting of a system of Galois modules satisfying certain natural axioms. The article \cite{kato2006universal} considers the non-commutative case but also gives an overview of the classical theory that is similar in spirit to our treatment. 
In the setting of elliptic curves a similar result is due to Mazur and Rubin \cite[Theorem 4.2]{MazurRubin03}.
\item Suppose $p \nmid [L:K]$. Then it is well-known that any $\bLambda$-module that is $\Lambda$-projective is necessarily $\bLambda$-projective (see, for example, \cite[Lem.\@ 5.4.16]{NSW}). Since $\bLambda$ is a semi-local ring in this case, and $\NS^{r}$ has constant local rank ${r_T \choose r}$ by the calculation of the proof below, it follows that $\NS^{r}$ is necessarily $\bLambda$-free of rank ${r_T \choose r}$. An analogous statement for universal norms now also follows by codescent. 
\end{liste}
\end{rk}

\textit{Proof of Theorem \ref{UN-structure-theorem}:}
Consider the complex $D^\bullet$ represented by 
\begin{equation}\label{higher-rank-complex}
\begin{tikzcd}
\exprod^r_\bLambda \Pi \arrow{r}{\psi} & \Pi \otimes_\bLambda \exprod^{r - 1}_\bLambda \Pi.
\end{tikzcd}
\end{equation}
By virtue of Lemma \ref{LittleLemma}\,(a) we have that $H^0 ( D^\bullet) = \bidual_{\bLambda}^{r} H^1_{\Sigma, \Iw}(\cO_{L, S}, T)$. In particular, $\left ( \bidual_{\bLambda}^{r} H^1_{ \Sigma, \Iw}(\cO_{L, S}, T) \right)^\Gamma  = 0$ since $\exprod^r_\bLambda \Pi$ is $\Lambda$-free. \\
Moreover, the complex $D^\bullet$ is clearly perfect and the complex $D_n^\bullet = D^\bullet \otimes_\bLambda^\mathbb{L} \cR [\cG_n]$ is represented by 
\begin{cdiagram}
\exprod^r_{\cR [\cG_n]} \Pi_n \arrow{r}{\psi_n} & \Pi_n \otimes_{\cR [\cG_n]} \exprod^{r - 1}_{\cR [\cG_n]} \Pi_n
\end{cdiagram}
which has $H^0 ( D^\bullet_n) = \bidual_{\cR[\cG_n]}^{r} H^1_\Sigma(\cO_{L_n, S}, T)$ as its cohomology in its lowest degree.

Now fix a topological generator $\gamma_n$ of $\Gamma^n$. Then the decomposition $\bLambda = \Lambda[\cG]$ implies that there is an exact sequence
\begin{equation}\label{group-ring-resolution}
\begin{tikzcd}[column sep=3.5em]
    0 \arrow{r} & \bLambda \arrow{r}{\cdot (1-\gamma_n)} \arrow{r} & \bLambda \arrow{r} &  \cR[\cG_n] \to 0 .
\end{tikzcd}
\end{equation}
From this it follows that for any $\bLambda$-module $M$ and $i \geq 2$, the module $\Tor_i^\bLambda(\cR[\cG_n], M)$ vanishes.
Since the complex $D^\bullet$ is acyclic outside degrees zero and one, we then deduce that the spectral sequence
\begin{equation} \label{spectral-sequence}
E^{i , j}_2 = \Tor_{ - i}^\bLambda ( \cR [\cG_n], \; H^j (D^\bullet)) \; \Rightarrow \; E^{i + j} = H^{i + j} ( D^\bullet \otimes_{\bLambda}^\mathbb{L} \cR [\cG_n] ) 
\end{equation}
degenerates on its second page into a collection of short exact sequences. In particular, there is an injection
\begin{equation}\label{bidual-injection}
\left ( \bidual_{\bLambda}^{r} H^1_{\Sigma, \Iw}(\cO_{L, S}, T) \right) \otimes_\bLambda \cR [\cG_n] \hookrightarrow \bidual^r_{\cR [\cG_n]} H^1_\Sigma (\bigO_{L_n, S}, T)
\end{equation}
 from which one sees that the coinvariants module $\left ( \bidual_{\bLambda}^{r} H^1_{\Sigma, \Iw}(\cO_{L, S}, T) \right)_\Gamma$ is $\cR$-free. This implies that $\bidual_{\bLambda}^{r} H^1_{\Sigma, \Iw}(\cO_{L, S}, T)$ is $\Lambda$-free (see, for example, \cite[Prop.\@ 5.3.19]{NSW}).\medskip\\ 
To prove Part (a) of Theorem \ref{UN-structure-theorem} it now remains to demonstrate that the $\Lambda$-rank of $\NS^{r}$ is $[L:K]\cdot{r_T \choose r}$. 
Since $Q(\bLambda)$ is semi-simple,
an analysis of the exact sequence (\ref{yoneda-extension-sequence-H2}) implies that
\begin{align*}
\rank_{Q(\bLambda)}(Q(\bLambda) \otimes_{\bLambda} H^1(C^\bullet_\infty)) & = 
    \rank_{Q(\bLambda)}(Q(\bLambda) \otimes_{\bLambda} H^2_{ \Sigma, \Iw}(\cO_{L, S}, T))  \\
    & \phantom{=} + \rank_{Q(\bLambda)}(Q(\bLambda) \otimes_{\bLambda} \varprojlim_{n \in \N} (Y_K(T) \otimes_\cR \cR[\cG_n])).
\end{align*}
By the assumed validity of the weak Leopoldt conjecture we therefore have the equality of ranks
\begin{align*}
    \rank_{Q(\bLambda)}(Q(\bLambda) \otimes_{\bLambda} H^1(C_\infty^\bullet)) &= \rank_{Q(\bLambda)}(Q(\bLambda) \otimes_{\bLambda} \varprojlim_n (Y_K(T) \otimes_\cR \cR[\cG_n])) = r_T.
\end{align*}
On the other hand, an analysis of the Yoneda 2-extension (cf.\@ (\ref{yoneda-extension-sequence-Iwasawa}))
\begin{cdiagram}
    0 \arrow{r} & H^1_{\Sigma, \Iw}(\cO_{L,S},T) \arrow{r} & \Pi \arrow{r} &
    \Pi \arrow{r} & H^1(C_\infty^\bullet) \arrow{r} & 0
\end{cdiagram}
associated to the complex $C_\infty^\bullet$ yields the equality $\rank_{Q(\bLambda)}(Q(\bLambda) \otimes_{\bLambda} H^1_{\Sigma,\Iw}(\cO_{L,S}, T)) = r_T$. We may thus calculate
\begin{align*}
    \rank_{Q(\Lambda)}(Q(\Lambda) \otimes_\Lambda \NS^{r}) &= [L:K]\cdot \rank_{Q(\bLambda)}(Q(\bLambda) \otimes_{\bLambda} \NS^{r})\\
    &= [L:K] \cdot \rank_{Q(\bLambda)}\left(Q(\bLambda) \otimes_{\bLambda} \bidual_{\bLambda}^{r} H^1_{\Sigma, \Iw}(\cO_{L, S}, T)\right)\\
    &= [L:K]\cdot \rank_{Q(\bLambda)}\left(\exprod_{Q(\bLambda)}^{r} Q(\bLambda) \otimes_{\bLambda} H^1_{\Sigma, \Iw}(\cO_{L, S}, T)\right)\\
    &= [L:K]\cdot {r_T \choose r}.
\end{align*}

Before we proceed with the proof of Part (b) we first require the following Lemma for which we include the proof for lack of a better reference (and see Remark \ref{alternative-proof} below for another proof in our situation suggested to us by the referee).

\begin{lemma} \label{compactness-argument}
    Let $M = (M_n, \phi_{m,n})$ be an inverse system in the category of compact Hausdorff spaces with limit $M_\infty$. Fix $n \in \NN_0$ and write
    \begin{align*}
        \UN_n(M) := \bigcap_{m \geq n} \im(\phi_{m,n}).
    \end{align*}
    Then the natural map $M_\infty \to M_n$ has image $\UN_n(M)$.
\end{lemma}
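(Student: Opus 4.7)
The plan is to prove this by a standard Tychonoff/finite-intersection-property argument. One inclusion is essentially definitional: the natural projection $M_\infty \to M_n$ factors through $\phi_{m,n}\colon M_m \to M_n$ for every $m \geq n$, so its image lies in $\bigcap_{m \geq n} \im(\phi_{m,n}) = \UN_n(M)$. The content is in the reverse inclusion.

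For the nontrivial direction, fix $y \in \UN_n(M)$ and consider the product $P = \prod_k M_k$, which is compact Hausdorff by Tychonoff's theorem. For each $m \geq n$, I would define
\[
    C_m := \bigl\{ (x_k)_k \in P \,\bigm|\, x_n = y \text{ and } \phi_{k,j}(x_k) = x_j \text{ for all } j \leq k \leq m \bigr\}.
\]
Each $C_m$ is closed in $P$, being a finite intersection of preimages of diagonals under continuous projections and transition maps. The key point is that $C_m$ is nonempty: since $y \in \im(\phi_{m,n})$ by hypothesis, one may choose a lift $x_m \in \phi_{m,n}^{-1}(y)$, define $x_k := \phi_{m,k}(x_m)$ for $n \leq k \leq m$ (so automatically $x_n = y$ by transitivity), set $x_k := \phi_{n,k}(y)$ for $k < n$, and fill in arbitrary values at indices $k > m$.

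The family $(C_m)_{m \geq n}$ is manifestly decreasing, so enjoys the finite intersection property, and compactness of $P$ then yields a point $(x_k) \in \bigcap_{m \geq n} C_m$. By construction this tuple is fully compatible (i.e.\ an element of $M_\infty$) and has $n$-th coordinate equal to $y$, so $y$ lies in the image of the natural map $M_\infty \to M_n$, completing the proof.

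The only substantive point is the nonemptiness of each $C_m$, which is precisely where the hypothesis $y \in \bigcap_{m \geq n} \im(\phi_{m,n})$ is used; everything else is a routine application of Tychonoff. I do not anticipate any real obstacle here, since the result is purely topological and independent of the Iwasawa-theoretic setup in which it will be applied.
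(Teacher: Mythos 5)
Your argument is correct, and it takes a genuinely different (though closely related) route from the paper. The paper builds the coherent tuple coordinate-by-coordinate: given a partial tuple $(m_i)_{i \le s}$ satisfying the invariant $m_i \in \UN_i(M)$ for each $i$, it locates $m_{s+1}$ by intersecting the nested, nonempty, closed subsets $\phi_{s+1,s}^{-1}(m_s) \cap \im(\phi_{j,s+1})$ of $M_{s+1}$ over $j > s$ and invoking compactness of $M_{s+1}$ alone; maintaining the membership $m_{s+1} \in \UN_{s+1}(M)$ is precisely what allows the induction to keep running. You instead pass to the product $P = \prod_k M_k$, invoke Tychonoff once to get compactness of $P$, and apply the finite intersection property a single time to the nested closed sets $C_m$. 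Both are valid and both ultimately rest on the finite intersection property. Your version is essentially the textbook proof that a thread-restricted inverse limit of nonempty compact Hausdorff spaces is nonempty, is arguably more transparent (one compactness argument rather than one per level), and works verbatim for inverse systems over an arbitrary directed index set rather than $\NN_0$. The paper's version avoids an explicit appeal to Tychonoff, working only inside the individual spaces $M_{s+1}$, at the cost of the extra bookkeeping needed to preserve the universal-norm invariant across the induction. (The paper also records in Remark \ref{alternative-proof} a purely algebraic proof specific to the module-theoretic setting, which is a third route and not what you are doing.)
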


\begin{proof}
    Without loss of generality (and for notational simplicity) we prove the statement for $n = 0$.\\
    Suppose to be given an element $u \in \UN_0(M)$. We shall inductively construct an element $m \in M_\infty$ with the property that $m_0 = u$. Suppose that for some $s \geq 1$ we have constructed a coherent tuple $(m_i)_{0 \leq i \leq s}$ such that $m_0 = u$ and each $m_0 \in \UN_i(M)$. For $j \geq s$ define the sets
    \begin{align*}
        X_s = \phi_{s+1,s}^{-1}(m_s), \quad Y_{j,s} = \im(\phi_{j,s+1})
    \end{align*}
    Then both $X_s$ and $Y_{j,s}$ are closed. Indeed, the former is so by the fact that $\phi_{s+1,s}$ is continuous and the latter from the fact that $Y_{j,s}$ is a compact subspace of a Hausdorff space.\\
    It then follows that the intersection $X_s \cap Y_{j,s}$ is also closed and, by hypothesis, non-empty. In particular, the descending filtration $(X_s \cap Y_{j,s})_{j > s}$ satisfies the finite intersection property. Since $M_{s+1}$ is compact we then have that the intersection $\bigcap_{j > s} X_s \cap Y_{j,s}$
    is non-empty. We can therefore take $m_{s+1}$ to be any element of this intersection. Continuing in this fashion we may inductively construct an element $m = (m_i)$ of $M_\infty$ with the desired property. 
\end{proof}

Returning now to the proof of Theorem \ref{UN-structure-theorem}\,(b), we note that the augmentation ideal in $\bLambda$ relative to $\cG_n$ applied to $\NS^r$ is contained in the kernel of the natural map \begin{align*}
    \NS^r \to \bidual_{\cR[\cG_n]}^r H^1_\Sigma(\cO_{L_n, S}, T).
\end{align*}
Given this, we may apply the above Lemma \ref{compactness-argument} to conclude that for all $r \in \NN$ and $n \in \N_0$ the induced map
\begin{align*}
    \iota_n: \left(\varprojlim_n \bidual_{\cR[\cG_n]}^r H^1_\Sigma(\cO_{L_n, S}, T)\right) \otimes_\bLambda \cR[\cG_n] \longrightarrow \bidual_{\cR[\cG_n]}^{r} H^1_\Sigma(\cO_{L_n, S}, T)
\end{align*}
has image $\UN_{n}^{r}$.
On the other hand, the map of (\ref{bidual-injection}) factors through $\iota_n$ via the identification $\bidual^r_\bLambda H^1_{\Sigma, \Iw} ( \bigO_{L, S}, T) \cong \varprojlim_n \bidual_{\cR[\cG_n]}^r H^1_\Sigma(\cO_{L_n, S}, T)$ and so it is in fact an isomorphism. This establishes part (b) of the Theorem.\medskip \\
Finally, the exact sequence in (c) follows directly by combining Lemma \ref{BaseChangeLem} with the explicit resolution (\ref{group-ring-resolution}). To prove that the cokernel appearing in this exact sequence is finite, we claim that the $\cR$-rank of the $\cR$-free module $\bidual^r_{\cR [\cG_n]} \UN^1_n$ agrees with the $\cR$-rank of $\UN^r_n$. 

To do this we first observe that the isomorphism in (b) gives rise to the isomorphism
\begin{align*}
    \cQ \otimes_\cR \UN^1_n & \cong \cQ \otimes_\cR \big ( \cR [\cG_n] \otimes_\bLambda H^1_{\Sigma, \Iw} (\bigO_{L, S}, T) \big) \cong \cQ [\cG_n] \otimes_{Q (\bLambda)} \big ( Q (\bLambda) \otimes_\bLambda H^1_{\Sigma, \Iw} (\bigO_{L, S}, T) \big)
\end{align*}
In particular, since we have already seen that that $Q(\bLambda) \otimes_\bLambda H^1_{\Sigma, \Iw} (\bigO_{L, S}, T)$ is $Q(\bLambda)$-free of rank $r_T$ it follows that $\cQ \otimes_\cR \UN^1_n$ is $\cQ [\cG_n]$-free of rank $r_T$.
We may thus calculate
\begin{align*}
    \mathrm{rk}_{\cR} \big ( \bidual^r_{\cR [\cG_n]} \UN^1_n \big) & = 
    \mathrm{dim}_{\cQ} \big ( \cQ \otimes_\cR \bidual^r_{\cR [\cG_n]} \UN^1_n \big) \\
    & = |\cG_n| \cdot \mathrm{rk}_{\cQ [\cG_n]} \big (  \exprod^r_{\cQ [\cG_n]} \cQ \otimes_\cR \UN^1_n \big) \\
    & = [L_n : K] \cdot \binom{r_T}{r}
\end{align*}
and this matches the $\cR$-rank of $\UN^r_n$ calculated in (b). If $p \nmid | \cG|$, then the observation made in Remark \ref{structure-theorem-remark}(b) implies that $H^1_{\Sigma, \Iw} (\bigO_{L, S}, T)$ is a free $\bLambda$-module. From this it follows that the module $\Ext^1_\bLambda ( \exprod^r_\bLambda H^1_{\Sigma, \Iw} (\bigO_{L, S}, T)^\ast, \bLambda)$ vanishes, as required. \medskip \\
This concludes the proof of Theorem \ref{UN-structure-theorem}. 
\qed

\begin{remark}\label{alternative-proof}
    We are grateful to the referee for pointing out the following easy algebraic proof of Lemma \ref{compactness-argument} in the situation at hand (rather than the more general setting of compact Hausdorff spaces). Namely, suppose that each $M_n$ is a finitely generated $\cR[\cG_n]$-module with each $\phi_{m,n}$ surjective. Then the limit $M_\infty$ is a finitely generated $\bLambda$-module. Consider now the exact sequence $M_\infty \to \UN_n(M) \to Q_n \to 0$. By passing to the limit over $n$, and noting that all modules involved are compact Hausdorff, one deduces that the sequence $M_\infty \to \varprojlim_n \UN_n(M) \to \varprojlim_n Q_n \to 0$ is exact. By definition the first of these maps must be an isomorphism whence $\varprojlim_n Q_n = 0$. Since this an inverse limit with surjective transition maps, indexed over a countable set, one deduces that each $Q_n = 0$ whence the claim.
\end{remark}

\subsection{An Iwasawa-theoretic pairing}

\paragraph{Basic Norms}

At the outset of this section we define the following projection map:
\begin{align*}
    \Theta \: \Det_{\bLambda}(C^\bullet_\infty) &\hookrightarrow Q(\bLambda) \otimes_{\bLambda} \Det_{\bLambda}(C^ \bullet_\infty)\\
    &\xrightarrow{\simeq} \Det_{Q(\bLambda)}(Q(\bLambda) \otimes_{\bLambda} C^\bullet_\infty)\\
    &\xrightarrow{\simeq} Q(\bLambda) \otimes_{\bLambda} \bigg(\exprod_{\bLambda}^{r_T} H^1_{\Sigma, \Iw}(\cO_{L,S}, T) \otimes_{\bLambda} \Big(\exprod_{\bLambda}^{r_T} \varprojlim_n (Y_K(T)^* \otimes_\cR \cR[\cG_n])\Big)^*\bigg)\\
    &\xrightarrow{\simeq} Q(\bLambda) \otimes_{\bLambda} \exprod_{\bLambda}^{r_T} H^1_{\Sigma, \Iw}(\cO_{L,S}, T),
\end{align*}
where the second arrow follows from the base-change property of determinant functors, the third follows by passing to cohomology and noting that the weak Leopoldt conjecture is assumed to hold, and the final arrow follows from collapsing the exterior power with respect to a fixed $\bLambda$-basis of the inverse limit.

We now have the following Lemma which will prove useful in the sequel.

\begin{lemma} 
\label{projection-map-lemma-1}
   The image of $\Theta$ is contained in $\NS^{r_T}$.
\end{lemma}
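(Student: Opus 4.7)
The plan is to use a standard representative of $C^\bullet_\infty$ to produce an explicit $\bLambda$-generator of $\Det_{\bLambda}(C^\bullet_\infty)$, and then to identify its image under $\Theta$ with an element coming from the rank-reduction map of Lemma~\ref{LittleLemma}, to which Lemma~\ref{LittleLemma}\,(b) directly applies.

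First, fix a quadratic standard representative $[\Pi \xrightarrow{\psi} \Pi]$ of $C^\bullet_\infty$ together with the $\bLambda$-basis $\{b_1, \ldots, b_d\}$ of $\Pi$ provided by Lemma~\ref{standard-representative-lemma}. Taking $r = d$ and $s = d - r_T$ in Lemma~\ref{LittleLemma}\,(b), applied to the exact sequence (\ref{yoneda-extension-sequence-Iwasawa-modified}), shows that the image of
\[
\varphi := \bigwedge_{i = r_T + 1}^{d} \psi_i \colon \exprod_{\bLambda}^{d} \Pi \longrightarrow \exprod_{\bLambda}^{r_T} \Pi
\]
is contained in $\bidual_{\bLambda}^{r_T} H^1_{\Sigma, \Iw}(\cO_{L,S}, T) = \NS^{r_T}$. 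In particular, writing $\omega := b_1 \wedge \cdots \wedge b_d$, the element $\varphi(\omega) \in \exprod_{\bLambda}^{r_T} \Pi$ lies in $\NS^{r_T}$.

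Next, I would show that the image of $\Theta$ is precisely the $\bLambda$-submodule generated by $\varphi(\omega)$. The chosen representative trivialises $\Det_{\bLambda}(C^\bullet_\infty)$ as $\exprod^d_{\bLambda} \Pi \otimes_{\bLambda} (\exprod^d_{\bLambda} \Pi)^*$, a free $\bLambda$-module of rank one generated by $\omega \otimes \omega^*$. By construction of the basis in Lemma~\ref{standard-representative-lemma}, the quotient map $\Pi \twoheadrightarrow H^1(C^\bullet_\infty) \twoheadrightarrow Y_K(T)^* \otimes_{\cR} \bLambda$ identifies the images of $b_1, \ldots, b_{r_T}$ with a fixed $\bLambda$-basis of $Y_K(T)^* \otimes_{\cR} \bLambda$, while $\bigoplus_{i = r_T+1}^d \psi_i$ realises the remaining map in (\ref{yoneda-extension-sequence-Iwasawa-modified}). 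Tracing $\omega \otimes \omega^*$ through the four arrows defining $\Theta$ and using the decomposition of $Q(\bLambda) \otimes_{\bLambda} C^\bullet_\infty$ in cohomology (which uses the weak Leopoldt conjecture to collapse the torsion module $Q(\bLambda) \otimes_{\bLambda} H^2_{\Sigma,\Iw}$), the generator is sent to $\varphi(\omega) \otimes (b_1 \wedge \cdots \wedge b_{r_T})^*$ in $Q(\bLambda) \otimes_{\bLambda} (\exprod_{\bLambda}^{r_T} H^1_{\Sigma,\Iw} \otimes \exprod_{\bLambda}^{r_T}(Y_K(T)^* \otimes_{\cR} \bLambda)^*)$. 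The last arrow of $\Theta$ collapses the second factor via the chosen dual basis and therefore produces $\varphi(\omega)$.

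Combining these two steps gives $\im \Theta = \bLambda \cdot \varphi(\omega) \subseteq \NS^{r_T}$, as required. The hard part is the explicit unwinding of the determinant in the second step: one must carefully book-keep signs and orderings in passing from the $\bLambda$-generator $\omega \otimes \omega^*$ of $\Det_{\bLambda}(C^\bullet_\infty)$ through the acyclic-complex decomposition of $Q(\bLambda) \otimes_{\bLambda} C^\bullet_\infty$, and then verify that the resulting trivialisation matches the combinatorial prescription defining $\varphi$ in (\ref{wedge-morphism})–(\ref{ExplicitFormula}). Once this identification is established, Lemma~\ref{LittleLemma}\,(b) provides the containment in $\NS^{r_T}$ for free.
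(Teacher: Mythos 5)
Your proposal is correct and follows essentially the same route as the paper: fix the quadratic standard representative of Lemma~\ref{standard-representative-lemma}, identify $\Theta$ with a rank-reduction map, and apply Lemma~\ref{LittleLemma}\,(b) to the exact sequence (\ref{yoneda-extension-sequence-Iwasawa-modified}). The only difference is that the paper disposes of the identification of $\Theta$ with $\pi_\psi = (-1)^{r_T(d-r_T)}\bigwedge_{r_T < i \leq d}(\psi \circ b_i^\ast)$ by invoking \cite[Lem.~A.7]{EulerSystemsSagaI} applied to $Q(\bLambda)\otimes^{\mathbb{L}}_\bLambda C^\bullet_\infty$, rather than carrying out by hand the determinant unwinding that you (correctly) flag as the tedious step of your argument.
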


\begin{proof}
   Recall that in Lemma \ref{standard-representative-lemma} we have chosen a quadratic standard representative $[\Pi \stackrel{\psi}{\longrightarrow} \Pi]$ of the complex $C^\bullet_\infty$ and thereby also fixed a basis $\{ b_1, \dots, b_d \}$ of $\Pi$. 
    By applying \cite[Lem.\@ A.7]{EulerSystemsSagaI} to the complex $Q (\bLambda) \otimes_\bLambda^\mathbb{L} C^\bullet_\infty$  we then see that the projection map $\Theta$ coincides with the rank reduction map
    \[
    \pi_{\psi} = (-1)^{r_T (d - r_T)} \cdot \bigwedge_{r_T < i \leq d} (\psi \circ b_i^\ast) \: \exprod^{d}_\bLambda \Pi \to \exprod^{r_T}_\bLambda \Pi,
    \]
    where each $b_i^\ast \in \Pi^\ast$ denotes the dual of $b_i$.
    By Lemma \ref{LittleLemma}, applied to the exact sequence (\ref{yoneda-extension-sequence-Iwasawa-modified}), the image of $\pi_\psi$
    is contained in $\bidual^{r_T}_\bLambda H^1_{\Sigma, \Iw} (\bigO_{L, S}, T)$.
\end{proof}

\begin{definition}
We define the $\bLambda$-module of \emph{basic norm-coherent sequences} $\NS^b = \NS^b(T, L_\infty)$ to be the image of the homomorphism
    \begin{align*}
       \Theta \: \Det_{\bLambda}(C^\bullet_\infty) \longrightarrow \NS^{r_T}(T, L_\infty) .
    \end{align*} 
\end{definition}

\begin{proposition}\label{basic-theorem}\text{}
 The $\bLambda$-submodule $\NS^b = \NS^b(T,L_\infty)$ of $\NS^{r_T}$ is $\bLambda$-free of rank one. In particular, the quotient $\faktor{\NS^{r_T}}{\NS^b}$ is $\bLambda$-torsion.
\end{proposition}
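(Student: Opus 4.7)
The strategy is to first exhibit $\NS^b$ as isomorphic to $\Det_{\bLambda}(C^\bullet_\infty)$, deduce from this that $\NS^b$ is $\bLambda$-free of rank one, and then establish the torsion property of the quotient by a generic rank comparison.

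The freeness of $\Det_{\bLambda}(C^\bullet_\infty)$ itself is immediate: since $\bLambda = \Lambda[\cG]$ is a finitely generated module over the complete Noetherian local ring $\Lambda = \cR\llbracket \Gamma \rrbracket$, it is semi-local, and every invertible module over a commutative semi-local Noetherian ring is free of rank one. To see that $\NS^b \cong \Det_{\bLambda}(C^\bullet_\infty)$, I would verify that the map $\Theta$ is injective. Its first constituent, the localisation map $\Det_{\bLambda}(C^\bullet_\infty) \hookrightarrow Q(\bLambda) \otimes_{\bLambda} \Det_{\bLambda}(C^\bullet_\infty)$, is injective because $\Det_{\bLambda}(C^\bullet_\infty)$ is $\bLambda$-torsion-free, whilst all subsequent constituents are isomorphisms by construction. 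Hence $\NS^b = \im(\Theta) \cong \Det_{\bLambda}(C^\bullet_\infty)$ is $\bLambda$-free of rank one.

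For the torsion assertion, I would apply the exact functor $Q(\bLambda) \otimes_{\bLambda}-$ to the inclusion $\NS^b \hookrightarrow \NS^{r_T}$. The ring $\bLambda$ is reduced (it embeds into the semi-simple ring $Q(\Lambda)[\cG]$), so (\ref{xi-map}) applies and combines with the rank computation in the proof of Theorem \ref{UN-structure-theorem} to yield an identification $Q(\bLambda) \otimes_{\bLambda} \NS^{r_T} \cong \exprod^{r_T}_{Q(\bLambda)}(Q(\bLambda) \otimes_{\bLambda} H^1_{\Sigma, \Iw}(\cO_{L,S}, T))$ which is $Q(\bLambda)$-free of rank one. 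Since $Q(\bLambda) \otimes_{\bLambda}\Theta$ is then an injective map between two free $Q(\bLambda)$-modules of rank one, it must be multiplication by a non-zerodivisor of $Q(\bLambda)$; but $Q(\bLambda)$ is its own total quotient ring, so every non-zerodivisor is a unit, and the map is an isomorphism. Consequently $Q(\bLambda) \otimes_{\bLambda}(\NS^{r_T}/\NS^b) = 0$, which proves the torsion claim. The main technical point I foresee is unpacking the construction of $\Theta$ carefully enough to confirm that its intermediate arrows really are the asserted isomorphisms, and to handle the behaviour of biduals under localisation at the set of non-zerodivisors of $\bLambda$.
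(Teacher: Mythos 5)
Your proof is correct and takes essentially the same approach as the paper: $\Det_\bLambda(C^\bullet_\infty)$ is invertible, hence free of rank one over the semi-local ring $\bLambda$; injectivity of $\Theta$ then transfers this to $\NS^b$; and a generic rank comparison (the paper phrases it via $\Lambda$-ranks, you via $Q(\bLambda)$-ranks, but they are equivalent) yields the torsion claim. You are in fact more explicit than the paper on the two implicit steps — that $\operatorname{Pic}$ of a semi-local ring is trivial, and that $\Theta$ is injective because its first arrow is an inclusion of a torsion-free module into its localisation and the remaining arrows are isomorphisms by construction — which is a worthwhile clarification.
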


\begin{proofbox}
It suffices to observe that $\Det_{\bLambda}(C^\bullet_\infty)$, and thus $\NS^b$, is a free $\bLambda$-module of rank one. In particular, both $\NS^{r_T}$ and $\NS^b$ are free $\Lambda$-modules of rank $[L:K]$ and so their quotient is $\bLambda$-torsion.
\end{proofbox}

The following result shows that, at least conjecturally, the elements of $\NS^b(\ZZ_p(1), L_\infty)$ are familiar objects.

\begin{proposition}\label{eimc-proposition}
    Assume that for each $n \in \NN$ the $p$-part of the Rubin-Stark conjecture is valid for the data $(L_n | K, S, \Sigma, S_\infty(K))$ (as is formulated, for example, in \cite[Conj. 2.1]{BKS2}) and let $\varepsilon_n$ be the corresponding Rubin-Stark element. Assume, moreover, that the equivariant Iwasawa Main Conjecture (eIMC) is valid for the data $(L_\infty | K, S, \Sigma, p)$ (as is formulated in \cite[Conj. 3.1]{BKS2}). Then
    \begin{align*}
        \NS^b(\ZZ_p(1), L_\infty) = \langle(\varepsilon_n)_n\rangle_{\bLambda}.
    \end{align*}
\end{proposition}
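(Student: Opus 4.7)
\textit{Proof plan for Proposition \ref{eimc-proposition}}:

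The plan is to interpret the eIMC as producing a canonical $\bLambda$-basis of $\Det_\bLambda(C^\bullet_\infty)$ whose image under $\Theta$ is precisely the inverse limit of Rubin-Stark elements. First I would recall the formulation of the eIMC in \cite[Conj.\@ 3.1]{BKS2}: under the assumed validity of the $p$-part of the Rubin-Stark conjecture at each finite level $n$, the Rubin-Stark elements $\varepsilon_n \in \bidual^{r_T}_{\cR[\cG_n]} H^1_\Sigma(\cO_{L_n,S}, \ZZ_p(1))$ assemble into a norm-coherent sequence, and the eIMC asserts the existence of a $\bLambda$-basis $\mathcal{L}$ of the rank-one free module $\Det_\bLambda(C^\bullet_\infty)$ whose image under the projection $\Theta$ defined at the beginning of this subsection is precisely $(\varepsilon_n)_n \in \NS^{r_T}$. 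Indeed, the map $\Theta$ in this article is by construction the Iwasawa-theoretic version of the rank reduction map through which the eIMC is phrased in the Rubin-Stark framework.

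Given this, the proof would proceed as follows. Since $C^\bullet_\infty$ is a perfect complex of $\bLambda$-modules supported in degrees $0$ and $1$, the determinant $\Det_\bLambda(C^\bullet_\infty)$ is a free $\bLambda$-module of rank one, and hence
\[
\NS^b(\ZZ_p(1), L_\infty) \;=\; \Theta\bigl(\Det_\bLambda(C^\bullet_\infty)\bigr) \;=\; \bLambda \cdot \Theta(\mathcal{L})
\]
for any generator $\mathcal{L}$ of $\Det_\bLambda(C^\bullet_\infty)$. Taking $\mathcal{L}$ to be the basis produced by the eIMC, one then has $\Theta(\mathcal{L}) = (\varepsilon_n)_n$, and this immediately yields the claimed equality
\[
\NS^b(\ZZ_p(1), L_\infty) \;=\; \langle(\varepsilon_n)_n\rangle_{\bLambda}.
\]

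The main obstacle lies in unravelling the normalisations to match the map $\Theta$ of this article with the rank-reduction map that underlies the eIMC formulation of \cite{BKS2}. Concretely, $\Theta$ is defined by composing the base-change isomorphism of determinant functors with the semisimplicity of $Q(\bLambda)$, passing to cohomology (using the weak Leopoldt conjecture to discard $H^2_{\Sigma,\Iw}$ after base change to $Q(\bLambda)$), and collapsing the dual exterior power of $\varprojlim_n (Y_K(T)^* \otimes_\cR \cR[\cG_n])$ against a fixed $\bLambda$-basis. As established in Lemma \ref{projection-map-lemma-1}, this agrees with the rank reduction $\pi_\psi = (-1)^{r_T(d - r_T)} \bigwedge_{r_T < i \leq d}(\psi \circ b_i^*)$ attached to the quadratic standard representative $[\Pi \xrightarrow{\psi} \Pi]$ fixed in Lemma \ref{standard-representative-lemma}, and it is exactly this rank-reduction description that appears on the Galois-theoretic side of the eIMC in \cite[Conj.\@ 3.1]{BKS2}. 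Once the identification of the two maps (including signs) is in place, the compatibility of $\mathcal{L}$ with $(\varepsilon_n)_n$ is precisely the content of the eIMC, and the proof is complete.
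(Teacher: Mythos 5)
Your proposal is correct and follows essentially the same route as the paper: invoke Rubin's result that the Rubin--Stark elements form a norm-coherent sequence, recognise (via the equivalent formulation \cite[Conj.\@ 3.7]{BKS2} of the eIMC) that the preimage of $(\varepsilon_n)_n$ under the latter arrows of $\Theta$ is a $\bLambda$-basis of $\Det_\bLambda(C^\bullet_\infty)$, and conclude directly from the definition of $\NS^b$, using Lemma \ref{projection-map-lemma-1} to match $\Theta$ with the rank-reduction map of \cite{BKS2}.
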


\begin{proofbox}
    It is well-known that the family $(\varepsilon_n)_n$ constitutes an element of the module\\
    $\NS^{r_T}(\ZZ_p(1), L_\infty)$, see \cite[Prop.\@ 6.1]{Rubin96}. Denote by
    \begin{align*}
        \mathfrak{z}_\infty \in Q(\bLambda) \otimes_{\bLambda} \Det_{\bLambda}(C_\infty^\bullet)
    \end{align*}
    the inverse image of $(\varepsilon_n)_n$ under the latter three arrows in the definition of $\Theta$. Then, after taking into account the equivalent formulation \cite[Conj. 3.7]{BKS2} of the eIMC, one knows that $\mathfrak{z}_\infty$ is a $\bLambda$-basis of $\Det_{\bLambda}(C^\bullet_\infty)$. The Proposition now follows immediately from the definition of the module $\NS^b$.
\end{proofbox}

The following Theorem provides a direct link between the Galois module structures of the quotient appearing in Proposition \ref{basic-theorem} and $H^2_{\Sigma,\Iw}(\cO_{L, S}, T)$. 
We remark that if $I$ is an ideal of $\bLambda$ then one can, in turn, regard its reflexive hull $I^{\ast \ast}$ as an ideal of $\bLambda$.

\begin{thm}\label{pairing-theorem}\text{}
    \begin{liste}
        \item{There exists a canonical isomorphism of $\bLambda$-modules
            \begin{align*}
                \Ext^1_{\bLambda}  \left ( \faktor{\NS^{r_T}}{\NS^b}, \; \bLambda \right ) \cong \faktor{\bLambda}{\Fitt^0_{\bLambda}(H^2_{\Sigma,\Iw}(\cO_{L,S}, T))^{\ast \ast}}.
            \end{align*}
        }
        \item{There exists a perfect pairing of $\bLambda$-modules
            \begin{align*}
                \faktor{\NS^{r_T}}{\NS^b} \times \faktor{\bLambda}{\Fitt^0_{\bLambda}(H^2_{\Sigma,\Iw}(\cO_{L,S}, T))^{\ast \ast}} \to \faktor{Q(\bLambda)}{\bLambda}
            \end{align*}
            which is explicitly given by the assignment $(u,v) \mapsto \overline{v}\cdot\eta^*(\overline{u})$, where $\eta$ is any choice of $\bLambda$-basis of $\NS^b$ and $\overline{u}$ and $\overline{v}$ are any lifts of $u$ and $v$ to $\NS^{r_T}$ and $\bLambda$, respectively.
        }
        \item{There exists an injective pseudo-isomorphism of $\Lambda$-modules
            \begin{align*}
                \faktor{\NS^{r_T}}{\NS^b} \approx \left ( \faktor{\bLambda}{\Fitt^0_{\bLambda}(H^2_{\Sigma,\Iw}(\cO_{L,S}, T))} \right)^\circ.
            \end{align*}
            If $p \nmid [L:K]$ then this can be taken to be a pseudo-isomorphism of $\bLambda$-modules where $\circ$ now also inverts the $\cG$-action.
        }
    \end{liste}
\end{thm}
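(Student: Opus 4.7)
The strategy is to establish part (a) as the main technical input and then derive (b) and (c) as essentially formal consequences.

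\textbf{Part (a).} I would apply Lemma \ref{LittleLemma}\,(d) to the exact sequence (\ref{yoneda-extension-sequence-Iwasawa-modified}) with parameters $r = d$, $s = d - r_T$, $Y = \Pi$, $X = H^1_{\Sigma, \Iw}(\cO_{L,S}, T)$ and $Z = H^2_{\Sigma, \Iw}(\cO_{L,S}, T)$. By the construction of $\NS^b$ via the projection $\Theta$ (cf. Lemma \ref{projection-map-lemma-1}), the image $\im \varphi$ appearing in that lemma coincides with $\NS^b$, so for any $\bLambda$-basis $\eta$ of $\NS^b$ one obtains a canonical isomorphism
\[
\mathrm{ev}_\eta \: (\NS^{r_T})^* \xrightarrow{\sim} \Fitt^0_\bLambda(H^2_{\Sigma,\Iw}(\cO_{L,S}, T))^{**}, \quad \Phi \mapsto \Phi(\eta).
\]
Next, I would apply $\Hom_\bLambda(-, \bLambda)$ to the short exact sequence $0 \to \NS^b \to \NS^{r_T} \to \NS^{r_T}/\NS^b \to 0$. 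Since $\NS^b$ is $\bLambda$-free and $\NS^{r_T}/\NS^b$ is $\bLambda$-torsion, this reduces to
\[
0 \to (\NS^{r_T})^* \to \bLambda \to \Ext^1_\bLambda(\NS^{r_T}/\NS^b, \bLambda) \to \Ext^1_\bLambda(\NS^{r_T}, \bLambda) \to 0.
\]
The remaining ingredient is the vanishing $\Ext^1_\bLambda(\NS^{r_T}, \bLambda) = 0$, which I would establish by combining the $\Lambda$-freeness of $\NS^{r_T}$ (Theorem \ref{UN-structure-theorem}\,(a)) with the standard change-of-rings identification $\Ext^i_\bLambda(M, \bLambda) \cong \Ext^i_\Lambda(M, \Lambda)^\#$, valid for every $\bLambda$-module $M$ because $\bLambda$ is $\Lambda$-free of finite rank (the Frobenius-type isomorphism $\Hom_\Lambda(\bLambda, \Lambda) \cong \bLambda^\#$ plays a role here). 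The isomorphism in (a) then follows by combining this vanishing with $\mathrm{ev}_\eta$.

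\textbf{Part (b).} Part (b) is obtained from (a) via the standard isomorphism $\Ext^1_\bLambda(M, \bLambda) \cong \Hom_\bLambda(M, Q(\bLambda)/\bLambda)$ for any finitely generated $\bLambda$-torsion module $M$, which arises from the long exact $\Ext$-sequence of $0 \to \bLambda \to Q(\bLambda) \to Q(\bLambda)/\bLambda \to 0$ together with the vanishing of $\Ext^i_\bLambda(M, Q(\bLambda))$ for such $M$. Applied to $M = \NS^{r_T}/\NS^b$ and combined with (a), this yields the desired perfect pairing with codomain $Q(\bLambda)/\bLambda$. To verify the explicit formula, I would extend $\eta^*$ to a $Q(\bLambda)$-linear map $Q(\bLambda) \otimes_\bLambda \NS^{r_T} \to Q(\bLambda)$, possible since $\NS^{r_T}$ has generic $\bLambda$-rank one and is generated there by $\eta$, and then trace through the identifications in (a) to confirm that the pairing assigns to $(u, v)$ the element $\bar{v} \cdot \eta^*(\bar u) \bmod \bLambda$.

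\textbf{Part (c).} The change-of-rings identification $\Ext^1_\bLambda(-, \bLambda) \cong \Ext^1_\Lambda(-, \Lambda)^\#$ applied to (a) yields an isomorphism of $\Lambda$-modules between the Iwasawa adjoint $\alpha_\Lambda(\NS^{r_T}/\NS^b) := \Ext^1_\Lambda(\NS^{r_T}/\NS^b, \Lambda)$ and $\bLambda/\Fitt^0_\bLambda(H^2_{\Sigma,\Iw}(\cO_{L,S},T))^{**}$, since the $\#$-twist only affects the $\cG$-action. The classical pseudo-isomorphism $\alpha_\Lambda(N) \approx N^\circ$ for $\Lambda$-torsion $N$, applied to $N = \NS^{r_T}/\NS^b$, then gives the injective $\Lambda$-pseudo-isomorphism $\NS^{r_T}/\NS^b \approx (\bLambda/\Fitt^0_\bLambda(H^2_{\Sigma,\Iw})^{**})^\circ \approx (\bLambda/\Fitt^0_\bLambda(H^2_{\Sigma,\Iw}))^\circ$, the last step being legitimate since $\Fitt^{**}/\Fitt$ is pseudo-null over $\bLambda$. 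When $p \nmid [L:K]$ the ring $\bLambda$ is a product of two-dimensional regular rings and the entire argument promotes to a pseudo-isomorphism of $\bLambda$-modules in which the $\circ$-twist additionally inverts the $\cG$-action, as the $\#$- and $\circ$-twists combine into the full involution of $\bLambda$. The most delicate aspect, and the main obstacle I anticipate, will be the careful bookkeeping of the $\circ$- and $\#$-involutions across the various $\Lambda$- and $\bLambda$-level comparisons, in particular to cleanly extract the enhanced $\bLambda$-statement in the tame case.
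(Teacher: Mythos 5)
Your proposal follows the paper's own proof in all essential respects: part (a) via Lemma~\ref{LittleLemma}\,(d) applied to (\ref{yoneda-extension-sequence-Iwasawa-modified}), the evaluation-at-$\eta$ isomorphism $(\NS^{r_T})^\ast \cong I^{\ast\ast}$, vanishing of $\Ext^1_\bLambda(\NS^{r_T},\bLambda)$ via $\Lambda$-freeness and (\ref{Exts}); part (b) via the identification $\Ext^1_\bLambda(M,\bLambda)\cong M^\lor$; part (c) via the classical pseudo-isomorphism $\alpha(N)\approx N^\circ$. Two steps that need more care than your sketch gives them, both of which the paper makes explicit: for the perfectness in (b) you also need the biduality $(\bLambda/I^{\ast\ast})^\lor \cong \NS^{r_T}/\NS^b$ (the paper cites \cite[Prop.\@ 5.5.8\,(iv)]{NSW} and (\ref{Exts})), not merely $M^\lor\cong N$; and for the claimed \emph{injectivity} of the pseudo-isomorphism in (c) you need to invoke that $\NS^{r_T}/\NS^b$ has projective dimension one over $\Lambda$ and hence no non-zero finite submodule, which your chain of pseudo-isomorphisms does not by itself deliver.
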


\begin{rk}\text{}\begin{enumerate}[label=(\alph*)]
    \item{The existence of pairings of the displayed shape in part (2) of Theorem \ref{pairing-theorem} was first observed (at least in the case of representations with coefficients in Gorenstein orders in finite-dimensional $\cQ$-algebras) by Burns, Sano and Tsoi in \cite[]{bst}. The above result can therefore be seen as a natural Iwasawa-theoretic analogue of the pairing constructed for $T$ by the aforementioned authors.
    Similar Iwasawa-theoretic results in the setting of $K =\Q$ have previously appeared in \cite[Prop.\@ 3]{NQD14}, which in turn is based on \cite{KraftSchoof}, and \cite[Thm.\@ 4]{Solomon2014}.
    }
    \item{Let $M$ be a finitely generated $\bLambda$-module. The ideal $\Fitt_{\bLambda}^0(M)^{\ast \ast}$ is related to a generalised notion of characteristic ideal due to Sakamoto in \cite[Appendix C]{Sakamoto20} (see also the earlier construction of Greither in \cite[\S\,5.2]{Cornelius}). To briefly explain this, fix a presentation $0 \to N \xrightarrow{f} \bLambda^n \to M \to 0$ of $M$. Define $\mathrm{char}_\bLambda(M)$ to be the image of $\bidual_{\bLambda}^n N$ inside $\bidual_{\bLambda}^n \bLambda^n \cong \bLambda$ under the map induced by $f$. Then $\mathrm{char}_{\bLambda}(M)$ is independent of the choice of presentation of $M$, reflexive and one has an inclusion $\Fitt_\bLambda^0(M)^{\ast \ast} \subseteq \mathrm{char}_\bLambda(M)$. Moreover, we have an equality $\Fitt_{\bLambda}^0(M)^{\ast \ast} = \mathrm{char}_\bLambda(M)$ in the following scenarios:
        \begin{enumerate}[label=(\roman*)]
            \item{$M$ is cyclic. In this case one can take $N = \Fitt_\bLambda(M)$ and $n = 1$ in the definition of $\mathrm{char}_\bLambda(M)$.}
            \item{The projective dimension of $M$ is at most one. In this case $N$ is projective and the natural map $\exprod_\bLambda^r N \to \bidual_{\bLambda}^r N$ is an isomorphism.}
            \item{$p \nmid |\cG|$. In this case $\bLambda$ is a normal ring and $\mathrm{char}_{\bLambda}(M)$ coincides with the usual notion of characteristic ideal.} 
            \item{The $\mu$-invariant of $M$ (as a $\Lambda$-module) vanishes. To see this, note that by Lemma \ref{IwasawaInvariants}\,(a) we have $M_\fp = 0$ for any singular prime $\fp$ of $\bLambda$. On the other hand, if $\fp$ is a regular prime then the ring $\bLambda_\fp$ is normal and so $\Fitt_{\bLambda_{\fp}}^0(M_\fp)^{\ast \ast} = \mathrm{char}_{\bLambda_{\fp}}(M_\fp)$. Since $\mathrm{char}_\bLambda(M)$ and $\Fitt_{\bLambda}^0(M)^{\ast \ast}$ are both reflexive and any reflexive $\bLambda$-module is determined by its localisations at primes of height at most one (see \cite[Lem.\@ C.13]{Sakamoto20}), the claim follows.}
        \end{enumerate}
    }
    \end{enumerate}
\end{rk}

\textit{Proof of Theorem \ref{pairing-theorem}:}
    By applying Lemma \ref{LittleLemma}\,(d) to the exact sequence (\ref{yoneda-extension-sequence-Iwasawa-modified}) we see that $\NS^b = \eta \bLambda$ is a free $\bLambda$-module of rank one and that we have an isomorphism
    \begin{align} \label{isom}
    (\NS^{r_T})^\ast \stackrel{\simeq}{\longrightarrow} I^{\ast \ast},
    \quad f \mapsto f (\eta). 
    \end{align}
    where we denote $I := \Fitt^0_{\bLambda} ( H^2_{\Sigma, \Iw} (\bigO_{L, S}, T))$. Moreover, (\ref{Exts}) and Theorem \ref{UN-structure-theorem}(a) taken together imply that the module $\Ext_{\bLambda}^1(\NS^{r_T}, \bLambda)$ vanishes. We thus have a commutative diagram with exact rows
    \begin{center}
        \begin{tikzcd}
            0 \arrow[r] &\displaystyle(\NS^{r_T})^* \arrow[r] \arrow{d}{\simeq} &
            ( \bLambda \eta )^* \arrow[r] \arrow{d}{\simeq} &\Ext^1_\bLambda \left  (\faktor{\NS^{r_T}}{\NS^b}, \bLambda \right) \arrow[r] \arrow[d] & 0\\
            0 \arrow[r] &I^{\ast \ast} \arrow[r] &\bLambda \arrow[r] &\faktor{\bLambda}{I^{\ast \ast}} \arrow[r] &0
        \end{tikzcd}
    \end{center}
    The isomorphism given in (a) now follows via an application of the Five-Lemma. \medskip \\
    To prove (c), note that $\NS^{r_T}/\NS^b$ has projective dimension one as a $\Lambda$-module and thus has no non-zero finite submodules (see, for example, \cite[Prop.\@ 5.3.19]{NSW}). We therefore have an injective pseudo-isomorphism
    \begin{align}\label{pseudo-iso-1}
         \faktor{\NS^{r_T}}{\NS^b} \to \left ( \faktor{\bLambda}{I^{\ast \ast}} \right)^\circ
    \end{align}
    which can be obtained, for example, by appealing to \cite[Prop.\@ 5.5.13]{NSW} or via an explicit calculation using a free resolution of $\NS^{r_T}/\NS^b$ (in which case one in fact obtains an isomorphism). Now, the natural map $\bLambda/I \to \bLambda/I^{\ast \ast}$ is a pseudo-isomorphism of $\Lambda$ modules as its kernel is $I^{\ast \ast}/I$ which is pseudo-null since $I^{\ast \ast}$ is the reflexive hull of $I$. On the other hand, $I^{\ast \ast}$ is $\Lambda$-free of the same rank as $\bLambda$ by (\ref{isom}) whence $\bLambda/I$ is $\Lambda$-torsion and so one may deduce the existence of a pseudo-isomorphism $\bLambda/I^{\ast \ast} \approx \bLambda/I$.
    The pseudo-isomorphism of the Theorem now follows by inverting the $\Gamma$-action across this map and then composing with the map (\ref{pseudo-iso-1}).\medskip \\
   It remains to demonstrate the existence of the pairing in (b). Observe that if $M$ is $\bLambda$-torsion, then by applying the functor $\Hom_\bLambda(M,-)$ to the tautological sequence $0 \to \bLambda \to Q(\bLambda) \to Q(\bLambda)/\bLambda \to 0$ one finds a canonical identification $M^\lor \cong \Ext_\bLambda^1(M, \bLambda)$. In addition, one knows by 
   \cite[Prop.\@ 5.5.8\,(iv)]{NSW} and the isomorphism (\ref{Exts})
   that
    \begin{align} \label{double-ext}
        \left (\faktor{\bLambda}{I^{\ast \ast}}\right )^\lor \cong \left ( \faktor{\NS^{r_T}}{\NS^b}\right)^{\lor\lor} \cong \faktor{\NS^{r_T}}{\NS^b}.
    \end{align}
    These two facts taken together establish both the existence and the perfectness of the desired pairing. Since the quotient $\NS^{r_T}/\NS^b$ is $\bLambda$-torsion, we may regard $\eta^*$ as being an element of $(\NS^{r_T})^* \otimes_\bLambda Q(\bLambda)$. A straightforward calculation then shows that one can in fact regard this as an element of $(\NS^{r_T}/\NS^b)^\lor$ from which one deduces the given explicit description of the pairing.
\qed

\subsection{Results on finite level}\label{finite-level-section}

In analogy to the Iwasawa-theoretic definition of basic norm coherent sequences, it is natural to make the following corresponding definition on finite level.

\begin{definition}
For each $n \in \NN_0$, we define the $\cR[\cG_n]$-module of \emph{basic universal norms} $\UN^b_n = \UN^b_n(T,L_\infty)$ to be the image of $\NS^b(T,L_\infty)$ under the map of Theorem \ref{UN-structure-theorem}\,(b).
\end{definition}

There is another, equivalent, way of constructing the module of basic universal norms that is closer in spirit to the definition of basic norm-coherent sequences. In order to explain this, we let $e_{L_n, T} \in \mathcal{Q}[\cG_n]$ be the sum of the primitive idempotents that annihilate $H^2_\Sigma (\bigO_{L_n, S}, T)$. In this regard we remark that Jannsen has conjectured in \cite[Conj.~1]{jannsen} that this module should be finite in all but a few exceptional cases. We then have the projection map
\begin{align*}
    \Theta_{L_n} \: \Det_{\cR [\cG_n]} (C^\bullet_n) & \hookrightarrow
    \mathcal{Q} \otimes_{\cR} \Det_{\cR [\cG_n]} (C^\bullet_n) \\
    & \stackrel{\simeq}{\longrightarrow} \Det_{\mathcal{Q}[\cG_n]} ( \mathcal{Q} \otimes_{\cR} C^\bullet_n) \\
    & \stackrel{\simeq}{\longrightarrow} \Det_{\mathcal{Q} [\cG_n]} ( \mathcal{Q} \otimes_\cR H^0 (C^\bullet_n)) \otimes_{\cQ [\cG_n]} ( \Det_{\cQ [\cG_n]} ( \cQ \otimes_{\cR} H^1 (C^\bullet_n)))^{-1} \\
    & \stackrel{\cdot e_{L_n, T}}{\longrightarrow} 
    e_{L_n, T} \Big ( \Det_{\mathcal{Q} [\cG_n]} ( \mathcal{Q} \otimes_\cR H^0 (C^\bullet_n)) \otimes_{\cQ [\cG_n]}  ( \Det_{\cQ [\cG_n]} ( \cQ \otimes_{\cR} H^1 (C^\bullet_n)))^{-1} \Big ) \\
    & \stackrel{\simeq}{\longrightarrow} e_{L_n, T}  \Big ( \exprod^{r_T}_{\cQ [\cG_n]} \cQ \otimes_\cR H^1_\Sigma (\bigO_{L, S}, T) \Big )
    \otimes_{\cQ [\cG_n]}  \exprod^{r_T}_{\cQ [\cG_n]} ( Y_K (T) \otimes_\cR \cQ [\cG_n])^\ast \\
    & \stackrel{\simeq}{\longrightarrow} 
    e_{L_n, T}  \Big ( \exprod^{r_T}_{\cQ [\cG_n]} \cQ \otimes_\cR H^1_\Sigma (\bigO_{L, S}, T) \Big ),
\end{align*}
where the second arrow follows from the base-change property of determinant functors, the third from the natural passage-to-cohomology map, the fourth by multiplication by the idempotent $e_{L_n, T}$, and the final one by applying the (non-canonical) isomorphism $ \exprod^{r_T}_{\cQ [\cG_n]} ( Y_K (T) \otimes_\cR \cQ [\cG_n])^\ast \cong \cQ [\cG_n]$ resulting from the fact that $Y_K (T) \otimes_\cR \cR [\cG_n]$ is a free $\cR [G_n]$-module of rank $r_T$. 

\begin{lem} \label{FiniteLem}
The image of the map $\Theta_{L_n}$ is contained in $\left( \bidual^{r_T}_{\cR [\cG_n]} H^1_\Sigma (\bigO_{L_n, S}, T) \right ) [1 - e_{L_n, T}]$, and coincides with $\UN^b_n$.
\end{lem}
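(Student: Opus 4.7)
The approach is to adapt the rank-reduction argument of Lemma \ref{projection-map-lemma-1} to the finite level. First, I would fix the representative $[\Pi_n \xrightarrow{\psi_n} \Pi_n]$ of $C_n^\bullet$ obtained by base change from the standard representative $[\Pi \xrightarrow{\psi} \Pi]$ of $C_\infty^\bullet$ from Lemma \ref{standard-representative-lemma}, together with the induced basis $\{b_1, \dots, b_d\}$ of $\Pi_n$. By construction this basis is compatible with the surjection $H^1(C_n^\bullet) \to Y_K(T)^* \otimes_\cR \cR[\cG_n]$, and the finite-level analogue of the sequence (\ref{yoneda-extension-sequence-Iwasawa-modified}) reads
\[
0 \longrightarrow H^1_\Sigma(\cO_{L_n, S}, T) \longrightarrow \Pi_n \xrightarrow{\bigoplus_{i=r_T+1}^d \psi_{n, i}} \cR[\cG_n]^{\oplus(d-r_T)} \longrightarrow H^2_\Sigma(\cO_{L_n, S}, T) \longrightarrow 0,
\]
where $\psi_{n, i} := b_i^* \circ \psi_n$.

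To prove the containment claim, I would first observe that $\cR[\cG_n]$ is a Gorenstein ring (as a finite group algebra over the Gorenstein ring $\cR$) and in particular satisfies the conditions $(G_1)$ and $(S_2)$ required for Lemma \ref{LittleLemma} to be applicable. Applying \cite[Lem.\@ A.7]{EulerSystemsSagaI} to the complex $e_{L_n, T} \cdot (\cQ \otimes_\cR C_n^\bullet)$ (on which $H^2_\Sigma$ has been annihilated, so that the cohomology in degree one becomes a free $e_{L_n, T}\cQ[\cG_n]$-module) identifies the map $\Theta_{L_n}$ with the composite of multiplication by $e_{L_n, T}$ and the rank-reduction map
\[
\pi_{\psi_n} = (-1)^{r_T(d - r_T)} \bigwedge_{r_T < i \leq d} \psi_{n, i} \colon \exprod^d_{\cR[\cG_n]} \Pi_n \longrightarrow \exprod^{r_T}_{\cR[\cG_n]} \Pi_n.
\]
Lemma \ref{LittleLemma}(b), applied to the above exact sequence with $r = d$ and $s = d - r_T$, then shows that the image of $\pi_{\psi_n}$ is contained in $\bidual^{r_T}_{\cR[\cG_n]} H^1_\Sigma(\cO_{L_n, S}, T)$, and the subsequent multiplication by $e_{L_n, T}$ confines the image to the $e_{L_n, T}$-component, as claimed.

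For the equality with $\UN^b_n$, I would invoke the base-change compatibility of the determinant functor together with the passage-to-cohomology construction. These compatibilities imply that $\Theta_{L_n}$ fits into a commutative square whose vertical arrows are the canonical isomorphism $\Det_{\bLambda}(C_\infty^\bullet) \otimes_\bLambda \cR[\cG_n] \cong \Det_{\cR[\cG_n]}(C_n^\bullet)$ on the left and the codescent map of Theorem \ref{UN-structure-theorem}(b) on the right, and whose top and bottom rows are $\Theta$ and $\Theta_{L_n}$, respectively. Since $\UN^b_n$ is by definition the image of $\NS^b$ under the right-hand codescent map, commutativity of the diagram yields the required equality of images.

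The main obstacle will lie in the careful verification of this commutativity. While the Iwasawa-theoretic map $\Theta$ exploits the torsion nature of $H^2_{\Sigma, \Iw}$ (guaranteed by the weak Leopoldt conjecture) to pass to $Q(\bLambda)$, the finite-level construction must instead explicitly multiply by the idempotent $e_{L_n, T}$ in order to annihilate the $H^2_\Sigma$-contribution. Reconciling these two mechanisms, and tracking the (non-canonical) identifications of the $Y_K(T)^*$-factor under base change, constitutes the most delicate point of the argument.
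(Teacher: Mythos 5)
Your approach mirrors the paper's in its essentials: identify $\Theta_{L_n}$ with a rank-reduction map via the standard quadratic representative, apply Lemma~\ref{LittleLemma} for the bidual containment, and establish the equality with $\UN^b_n$ via a commutative diagram with codescent maps. However, your containment argument has a gap. Lemma~\ref{LittleLemma}\,(b) gives the integral statement $\im\pi_{\psi_n} \subseteq \bidual^{r_T}_{\cR[\cG_n]} H^1_\Sigma(\cO_{L_n,S},T)$, but you then claim the ``subsequent multiplication by $e_{L_n,T}$ confines the image to the $e_{L_n,T}$-component.'' Since $e_{L_n,T}$ lies in $\cQ[\cG_n]$ but not in $\cR[\cG_n]$, multiplying an arbitrary element of the integral bidual by $e_{L_n,T}$ lands a priori only in $\cQ \otimes_\cR \bidual^{r_T}$ and not back in the lattice $\left(\bidual^{r_T}_{\cR[\cG_n]} H^1_\Sigma(\cO_{L_n,S},T)\right)[1-e_{L_n,T}]$. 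The missing observation is that $\pi_{\psi_n}$ already factors through the $e_{L_n,T}$-component: at a character $\chi$ for which $e_\chi(\cQ\otimes_\cR H^2_\Sigma(\cO_{L_n,S},T)) \neq 0$, the rationalised sequence you wrote shows that the functionals $\psi_{n,i}^\chi$ for $r_T < i \leq d$ are $\cQ_\chi$-linearly dependent, so the wedge $\bigwedge_{r_T < i \leq d}\psi_{n,i}^\chi$ vanishes. It is precisely this vanishing, not the subsequent application of the idempotent, that produces the containment, and it is the content of \cite[Prop.~A.7\,(i)]{EulerSystemsSagaI} (which the paper cites directly) rather than of \cite[Lem.~A.7]{EulerSystemsSagaI} (which supplies only the rank-reduction description).

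For the equality with $\UN^b_n$ your route is correct and essentially coincides with the paper's. The paper obtains the commutativity of the square by observing that the finite-level rank-reduction data $(\Pi_n,\psi_n)$ are by construction the base change of $(\Pi,\psi)$, which makes the compatibility of $\pi_{\psi}$ and $\pi_{\psi_n}$ under codescent transparent. This is exactly what neutralises the ``main obstacle'' you anticipate: once both maps are expressed as rank-reductions on the integral representatives, there is no need to track the rational identifications or the idempotent through the determinant functor.
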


\begin{proofbox}
Since the complex $C^\bullet_{n}$ admits a standard quadratic representative with respect to the map $H^1(C^\bullet_{n}) \to Y_K(T)^* \otimes_\cR \cR[\cG_n]$, the first claim is exactly \cite[Prop.\@ A.7\,(i)]{EulerSystemsSagaI}.
As for the second claim, we observe that the descriptions of $\Theta$ and $\Theta_{L_n}$ in terms of rank reduction maps $\pi_{\psi_\infty}$ and $\pi_{\psi_n}$, respectively, yield 
a commutative diagram
\begin{cdiagram}
\Det_\bLambda ( C^\bullet_\infty) \arrow{r}{\Theta} \arrow[twoheadrightarrow]{d} & 
\bidual^{r_T}_\bLambda H^1_{\Sigma, \Iw} (\bigO_{L, S}, T) \arrow{d} \\
\Det_{\cR [\cG_n]} ( C^\bullet_n) \arrow{r}{\Theta_{L_n}} &  
\bidual^{r_T}_{\cR [\cG_n]} H^1_\Sigma (\bigO_{L_n, S}, T),
\end{cdiagram}
where the vertical arrows are the natural codescent maps.
\end{proofbox}

It turns out that the module of basic universal norms may not be very interesting at the bottom layers of the extension $L_\infty$. We remark however that the behaviour explicated in the next Lemma will always stop if one climbs high enough up the tower due to our assumption that no finite place splits completely in $L_\infty | K$. 

\begin{lem}
Suppose there is a finite place $v \in S$ that splits completely in $L_n | K$ and is such that $H^0 (K_v, T^\ast (1))$ is non-zero. Then $\UN^b_n = 0$.
\end{lem}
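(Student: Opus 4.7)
The plan is to reduce the claim to the vanishing of the idempotent $e_{L_n, T}$. By Lemma \ref{FiniteLem}, $\UN^b_n$ coincides with the image of $\Theta_{L_n}$, and by construction the latter factors through multiplication by $e_{L_n, T}$. Hence it would suffice to prove $e_{L_n, T} = 0$, which unwinds to establishing that the $\chi$-isotypic component $(H^2_\Sigma(\cO_{L_n, S}, T) \otimes_\cR \cQ)^\chi$ does not vanish for any character $\chi$ of $\cG_n$ (taken over a sufficiently large splitting field).

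To produce the required non-vanishing I would exploit the local-global picture. The defining triangle of $\text{R}\Gamma_\Sigma$ yields a surjection $H^2_\Sigma(\cO_{L_n, S}, T) \twoheadrightarrow H^2(\cO_{L_n, S}, T)$ (using that $\kappa_w$ has cohomological dimension one), and the Poitou-Tate sequence provides a localisation map
\[
H^2(\cO_{L_n, S}, T) \longrightarrow \bigoplus_{w \in S(L_n)_{\mathrm{fin}}} H^2((L_n)_w, T)
\]
with cokernel a subquotient of $H^0(\cO_{L_n, S}, T^\ast(1))^\lor$. Since $v$ splits completely in $L_n | K$, the set $\{w \mid v\}$ forms a $\cG_n$-torsor, and local Tate duality together with the hypothesis $H^0(K_v, T^\ast(1)) \neq 0$ identifies the $v$-part of the target with the $\cR[\cG_n]$-free module $H^2(K_v, T) \otimes_\cR \cR[\cG_n]$ of rank $h := \mathrm{rk}_\cR H^0(K_v, T^\ast(1)) > 0$. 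Each $\chi$-isotypic component of this module therefore contributes $\cQ$-dimension $h$ to the target after rationalisation.

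The main technical step, which I anticipate to be the only delicate part of the argument, is to verify that the cokernel of the localisation cannot exhaust the $v$-contribution on any $\chi$-isotypic component. One would combine: (i) the global bound $\dim_\cQ (H^0(\cO_{L_n, S}, T^\ast(1))^\lor \otimes_\cR \cQ) \leq h$, coming from the injective restriction $H^0(\cO_{L_n, S}, T^\ast(1)) \hookrightarrow H^0(K_v, T^\ast(1))$ afforded by the complete splitting of $v$; (ii) the additional contributions to the target from the other finite places in $S$, which exist because $S$ must also contain $S_p(K) \cup S_\ram(T)$; and (iii) a careful equivariant rank accounting on each $\chi$-component. Together these would establish $(H^2_\Sigma \otimes_\cR \cQ)^\chi \neq 0$ for every $\chi$, hence $e_{L_n, T} = 0$ and $\UN^b_n = 0$. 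The cohomological and duality-theoretic inputs are standard; only this rank comparison requires care.
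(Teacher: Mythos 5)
Your opening reduction is valid but takes a genuinely different route from the paper. You reduce via Lemma \ref{FiniteLem} to showing $e_{L_n, T} = 0$, i.e.\ that $(H^2_\Sigma(\cO_{L_n, S}, T) \otimes_\cR \cQ)^\chi$ is non-zero for every $\chi$. The paper instead works directly with $\NS^b$ at the infinite level: it shows that a $\bLambda$-generator $\eta$ of $\NS^b$ lies in $\Fitt^0_\bLambda(H^2_{\Sigma,\Iw}(\cO_{L,S},T)) \cdot \exprod^{r_T}_\bLambda \Pi$ (by evaluating dual basis elements and invoking \cite[Prop.\@ A.2(ii)]{EulerSystemsSagaI}), and then proves $\Fitt^0_\bLambda(H^2_{\Sigma,\Iw}) \subseteq I_{\Gamma^n}$, so that $\eta$ already dies under the codescent map.

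The technical execution of your approach contains a genuine gap. You propose to establish the non-vanishing of each $\chi$-component of $H^2_\Sigma$ via the Poitou--Tate localisation map and a rank comparison. But the $\chi$-component of the image of the localisation map satisfies only the inequality
\[
\dim_\cQ \bigl( \mathrm{im} \bigr)^\chi \;\geq\; \dim_\cQ \bigl( \textstyle\bigoplus_w H^2((L_n)_w, T) \otimes \cQ \bigr)^\chi - \dim_\cQ \bigl( H^0(\cO_{L_n,S}, T^*(1))^\lor \otimes \cQ \bigr)^\chi ,
\]
and the cokernel on the right has total $\cQ$-dimension at most $h$, so it could \emph{a priori} be concentrated in a single $\chi$-component and cancel the entire $v$-contribution (which is exactly $h$ in each component). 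Your step (ii) invokes ``additional contributions from other finite places in $S$,'' but these are not guaranteed to survive in every $\chi$: if $H^0(K_w, T^*(1)) = 0$ for the other finite places $w \in S$ (which is perfectly possible for a general $T$ — consider $T^*(1)$ having no local invariants at $p$), then $H^2((L_n)_{w'}, T) = 0$ by local Tate duality and the places contribute nothing. So your rank accounting cannot close without a further hypothesis.

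The gap disappears if you use the surjection
\[
H^2_\Sigma(\cO_{L_n, S}, T) \twoheadrightarrow H^0(K_v, T^*(1))^\ast \otimes_\cR \cR[\cG_n]
\]
that the paper extracts from the $V_n$-supported Selmer complex (Proposition \ref{SplitPrimesProp}) rather than from the Poitou--Tate localisation map. Since the target here is $\cR[\cG_n]$-free of non-zero rank, every $\chi$-component of $H^2_\Sigma$ maps onto something non-zero, so $e_{L_n,T} = 0$ follows immediately with no rank comparison. This surjection is built from the mapping cone defining $\mathrm{R}\Gamma_{\Sigma, V_n}$, where the vanishing of the relevant $H^3$ terms requires no global input; it is not the same localisation map you invoke, and it sidesteps the cokernel entirely. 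In short, your reduction to the vanishing of the idempotent is a clean alternative to the paper's Fitting-ideal argument, but the Poitou--Tate route to that vanishing does not work as sketched; replacing it with the Selmer-complex surjection closes the gap.
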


\begin{proofbox}
We have a commutative diagram
\begin{cdiagram}
\NS^{r_T} \arrow[hookrightarrow]{r} \arrow{d} & \exprod^{r_T}_\bLambda \Pi \arrow{d} & \\
\UN^{r_T}_n \arrow[hookrightarrow]{r} & \exprod^{r_T}_{\cR [\cG_n]} \Pi_n,
\end{cdiagram}
where the vertical maps are the natural codescent maps. It therefore suffices to demonstrate that any basis $\eta$ of $\NS^b$ is contained in $I_{\Gamma^n} \cdot \exprod^{r_T}_{\bLambda} \Pi$, where we write
\[
I_{\Gamma^n} = \ker \{ \bLambda \to \cR [\Gamma_n] \} 
\]
for the augmentation ideal relative to $\Gamma^n$. An application of \cite[Prop.\@ A.2\,(ii)]{EulerSystemsSagaI} to the exact sequence (\ref{yoneda-extension-sequence-Iwasawa-modified}) implies that for any $f \in \exprod^{r_T}_\bLambda \Pi^\ast$ we have 
\[
f (\eta) \in \Fitt^0_\bLambda ( H^2_{\Sigma, \Iw} ( \bigO_{L, S}, T)). 
\]
If $b_1, \dots, b_d \in \Pi$ constitutes a $\bLambda$-basis, then for any $\sigma \in \mathfrak{S}_{d, r_T}$ (see the definiton following (\ref{ExplicitFormula})) the inclusion
\[
(b_{\sigma (1)}^\ast \wedge \dots \wedge b_{\sigma({r_T})}^\ast ) (\eta) \in \Fitt^0_\bLambda ( H^2_{\Sigma, \Iw} ( \bigO_{L, S}, T)) 
\]
holds. 
Since $\{ \bigwedge_{1 \leq i \leq r_T} b_{\sigma (i)} \mid \sigma \in \mathfrak{S}_{d, r_T} \}$ constitutes a basis of $\exprod^{r_T}_\bLambda \Pi$, we conclude that
\[
\eta \in \Fitt^0_\bLambda ( H^2_{\Sigma, \Iw} ( \bigO_{L, S}, T)) \cdot \exprod^{r_T}_\bLambda \Pi.
\]
We are therefore reduced to showing that $\Fitt^0_\bLambda ( H^2_{\Sigma, \Iw} ( \bigO_{L, S}, T)) \subseteq I_{\Gamma^n}$. By Proposition \ref{SplitPrimesProp} there is a surjection
\[
H^2_{\Sigma, \Iw} (\bigO_{L, S}, T) \twoheadrightarrow H^2_\Sigma (\bigO_{L_n, S}, T) \twoheadrightarrow H^0 (K_v, T^\ast (1)) \otimes_\cR \cR [\cG_n].
\]
Since we assumed $H^0 (K_v, T^\ast (1)) \neq 0$, the module $H^0 (K_v, T^\ast (1)) \otimes_\cR \cR [\cG_n]$ is $\cR [\cG_n]$-free of non-zero rank $t$, say. The Lemma therefore follows from the inclusion
\[
\Fitt^0_\bLambda ( H^2_{\Sigma, \Iw} ( \bigO_{L, S}, T)) \subseteq
\Fitt_\bLambda^0 ( H^0 (K_v, T^\ast (1)) \otimes_\cR \cR [\cG_n]) = \Fitt^0_\bLambda ( \cR [\cG_n]^{t} ) = I_{\Gamma^n}^t. \tag*{\qedhere} 
\]
\end{proofbox}

We next turn to a description of the quotient $\bidual^{r_T}_{\cR [\cG_n]} H^1_\Sigma (\bigO_{L, S}, T) / \UN^{r_T}_n$. This should be regarded as a complement to the study of the quotient $\bidual^{r_T}_{\cR [\cG_n]} H^1_\Sigma (\bigO_{L, S}, T) / \UN^{b}_n$ undertaken in \cite{bst}. We recall that for an abelian group $A$ we write $A_\tor$ and $A_\tf$ for its torsion and torsion-free parts, respectively.

\begin{proposition} \label{Dodgy-Proposition}
Assume that $p \nmid |\cG|$. 
\begin{liste}
\item We have an equality
\[
\Big\{ f (a) \mid a \in \UN^{r_T}_n, \; f \in \exprod^{r_T}_{\cR [\cG_n]} H^1_\Sigma ( \bigO_{L_n, S}, T)^\ast  \Big\} = \Fitt^0_{\cR [\cG_n]} \big ( H^2_{\Sigma, \Iw}( \bigO_{L, S}, T)^{\Gamma^n, \vee}_\tor \big ).
\]
\item The following holds:
\[
\left ( \faktor{\bidual^{r_T}_{\cR [\cG_n]} H^1_\Sigma ( \bigO_{L_n, S}, T)}{\UN^{r_T}_n} \right )_{\tor} \cong \left ( \faktor{\cR [\cG_n]}{\Fitt^0_{\cR [\cG_n]} ( H^2_{\Sigma, \Iw}( \bigO_{L, S}, T)^{\Gamma^n, \vee}_\tor )} \right)^\vee.
\]
\item There is an injection
\[
\left ( \faktor{\exprod^{r_T}_{\cR [\cG]} H^1_\Sigma ( \bigO_{L, S}, T)}{\UN^{r_T}_0} \right )_{\tf} \hookrightarrow ( H^2_{\Sigma, \Iw} (\bigO_{L, S}, T)^\Gamma)_\tf \otimes_{\cR [\cG]} \exprod^{r_T - 1}_{\cR [\cG]} H^1_\Sigma ( \bigO_{L, S}, T)
\]
that is induced by the boundary morphism
\[
\delta \: \left ( \faktor{
H^1_\Sigma (\bigO_{L, S}, T) 
}{\UN^1_0  } \right )_\tf
\stackrel{\simeq}{\longrightarrow} H^2_{\Sigma, \Iw} (\bigO_{L, S}, T)^\Gamma_\tf.
\]
\end{liste}
\end{proposition}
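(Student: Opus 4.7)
The approach rests on two preliminary observations available under the hypothesis $p \nmid |\cG|$. First, by Remark \ref{structure-theorem-remark}(b) the Ext-term in Theorem \ref{UN-structure-theorem}(c) vanishes, so that $\UN^{r_T}_n = \bidual^{r_T}_{\cR[\cG_n]} \UN^1_n$; since $H^1_{\Sigma,\Iw}(\cO_{L,S},T)$ is $\bLambda$-reflexive (being $\cR$-torsion-free and admitting no non-zero pseudo-null $\bLambda$-submodule), we further have $\NS^1 = H^1_{\Sigma,\Iw}(\cO_{L,S},T)$ and consequently $\UN^1_n = H^1_{\Sigma,\Iw}(\cO_{L,S},T) \otimes_\bLambda \cR[\cG_n]$. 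Second, tensoring $C_\infty^\bullet$ with $\cR[\cG_n]$ over $\bLambda$ and computing $\Tor_1^\bLambda(-, \cR[\cG_n])$ via the resolution \eqref{group-ring-resolution} yields the descent short exact sequence
\begin{equation*}
0 \to H^1_{\Sigma,\Iw}(\cO_{L,S},T) \otimes_\bLambda \cR[\cG_n] \to H^1_\Sigma(\cO_{L_n,S},T) \to H^2_{\Sigma,\Iw}(\cO_{L,S},T)^{\Gamma^n} \to 0.
\end{equation*}

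For part (a), the plan is to first apply Lemma \ref{LittleLemma}(c) to the finite-level analogue of \eqref{yoneda-extension-sequence-Iwasawa-modified}, namely the sequence $0 \to H^1_\Sigma(\cO_{L_n,S},T) \to \Pi_n \to \cR[\cG_n]^{d-r_T} \to H^2_\Sigma(\cO_{L_n,S},T) \to 0$; this yields $\Fitt^0_{\cR[\cG_n]}(H^2_\Sigma(\cO_{L_n,S},T))$ as the evaluation ideal of $\UN^b_n$ on $\exprod^{r_T}_{\cR[\cG_n]} H^1_\Sigma(\cO_{L_n,S},T)^*$, up to a pseudo-null correction. To promote this to the evaluation ideal of the larger module $\UN^{r_T}_n$, one uses the isomorphism $\UN^{r_T}_n \cong \NS^{r_T} \otimes_\bLambda \cR[\cG_n]$ to reduce the question to a statement at the Iwasawa level, applies the isomorphism $(\NS^{r_T})^\ast \cong \Fitt^0_\bLambda(H^2_{\Sigma,\Iw}(\cO_{L,S},T))^{\ast\ast}$ from the proof of Theorem \ref{pairing-theorem}(a), and then descends to $\cR[\cG_n]$ via Lemma \ref{BaseChangeLem}(a). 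The identification $\Ext^1_{\cR[\cG_n]}(M, \cR[\cG_n]) \cong M^\vee$ for $\cR[\cG_n]$-torsion modules $M$ (valid because $\cR[\cG_n]$ is a product of one-dimensional Gorenstein rings under our hypothesis) together with the descent sequence displayed above then converts the Iwasawa-level Fitting ideal into the claimed ideal $\Fitt^0_{\cR[\cG_n]}(H^2_{\Sigma,\Iw}(\cO_{L,S},T)^{\Gamma^n,\vee}_\tor)$. Part (b) follows from (a) by the same Pontryagin-duality argument as in the proof of Theorem \ref{pairing-theorem}(b): the quotient is $\cR[\cG_n]$-torsion (as may be checked via the $\cR$-rank computation from the proof of Theorem \ref{UN-structure-theorem}(c)), and its torsion part is therefore dual to the quotient $\cR[\cG_n]/\Fitt^0(\ldots)$ appearing in (a).

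For part (c), specializing the descent sequence to $n = 0$ gives $0 \to \UN^1_0 \to H^1_\Sigma(\cO_{L,S},T) \to H^2_{\Sigma,\Iw}(\cO_{L,S},T)^\Gamma \to 0$, from which the boundary isomorphism $\delta$ is extracted after passage to torsion-free parts. To promote this to the rank-$r_T$ setting, the plan is to apply the rank reduction construction \eqref{wedge-morphism} to the quotient map $\pi \colon H^1_\Sigma \twoheadrightarrow H^1_\Sigma/\UN^1_0$, obtaining a natural morphism $\exprod^{r_T}_{\cR[\cG]} H^1_\Sigma \to (H^1_\Sigma/\UN^1_0) \otimes_{\cR[\cG]} \exprod^{r_T-1}_{\cR[\cG]} H^1_\Sigma$. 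This map kills $\exprod^{r_T}_{\cR[\cG]} \UN^1_0$ tautologically, and since the natural map $\exprod^{r_T}_{\cR[\cG]} \UN^1_0 \to \bidual^{r_T}_{\cR[\cG]} \UN^1_0 = \UN^{r_T}_0$ is a quasi-isomorphism on torsion-free parts, one obtains a well-defined map out of $(\exprod^{r_T}_{\cR[\cG]} H^1_\Sigma / \UN^{r_T}_0)_\tf$. Composing with $\delta \otimes 1$ and verifying injectivity by localization at the minimal primes of $\cR[\cG]$ (where the descent sequence becomes an isomorphism of free modules of the same $\cR$-rank) completes the argument. The main obstacle will be the bookkeeping in part (a), since one must carefully track how the transition from $\NS^b$ at the Iwasawa level to $\UN^{r_T}_n$ at finite level transforms the Iwasawa Fitting ideal into one involving the Pontryagin dual of the invariants of $H^2_{\Sigma,\Iw}$, with the torsion subscript arising from a careful accounting of the $\Ext^1$-terms produced by Lemma \ref{BaseChangeLem}(a).
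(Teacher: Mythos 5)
Your proposal departs from the paper's route on all three parts, and the departure in part (a) introduces a concrete gap. The paper never passes through the Iwasawa-level reflexive hull $\Fitt^0_\bLambda(H^2_{\Sigma,\Iw})^{\ast\ast}$ to reach the finite level. Instead, it uses that under $p\nmid|\cG|$ the module $\UN^1_n$ is $\cR[\cG_n]$-free of rank $r_T$ and sits in a presentation $0 \to \UN^1_n \to \Pi_n \to (\im\psi)_{\Gamma^n} \to 0$; dualizing this (using $\Ext^1_{\cR[\cG_n]}\bigl((\im\psi)_{\Gamma^n},\cR[\cG_n]\bigr) \cong ((\im\psi)_{\Gamma^n,\tor})^\vee$) produces a \emph{free} presentation $\Pi_n^\ast \to (\UN^1_n)^\ast \to ((\im\psi)_{\Gamma^n,\tor})^\vee \to 0$ of the Pontryagin dual, from which the Fitting ideal is read off directly. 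Identifying $(\im\psi)_{\Gamma^n,\tor}$ with $H^2_{\Sigma,\Iw}(\cO_{L,S},T)^{\Gamma^n}_\tor$ and replacing $\Pi_n^\ast$ by $H^1_\Sigma(\cO_{L_n,S},T)^\ast$ then gives (a). Your plan instead goes through $(\NS^{r_T})^\ast\cong\Fitt^0_\bLambda(H^2_{\Sigma,\Iw})^{\ast\ast}$ and proposes to descend by Lemma~\ref{BaseChangeLem}(a); but the claim that this base change yields $\Fitt^0_{\cR[\cG_n]}(H^2_{\Sigma,\Iw}(\cO_{L,S},T)^{\Gamma^n,\vee}_\tor)$, ``with the torsion subscript arising from a careful accounting of the $\Ext^1$-terms,'' is exactly the statement that needs proving, and it is not a matter of bookkeeping. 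Reflexive hulls over $\bLambda$ do not interact simply with $-\otimes_\bLambda\cR[\cG_n]$, and there is no obvious comparison between the image of $\Fitt^0_\bLambda(M)^{\ast\ast}$ in $\cR[\cG_n]$ and $\Fitt^0_{\cR[\cG_n]}(M^{\Gamma^n,\vee}_\tor)$. The essential idea you are missing is that one should dualize the \emph{finite-level} presentation of $\UN^1_n$, which turns the cokernel into the Pontryagin dual of a torsion module and hands you the Fitting ideal for free.

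Your treatment of (b) is directionally right but too thin: the paper's argument is not the one in Theorem~\ref{pairing-theorem}(b), but a snake-lemma chase on the diagram obtained by dualizing $\UN^{r_T}_n \hookrightarrow \bidual^{r_T}_{\cR[\cG_n]} H^1_\Sigma(\cO_{L_n,S},T)$, together with the surjectivity of $\exprod^{r_T}H^1_\Sigma^\ast \to (\bidual^{r_T}H^1_\Sigma)^\ast$ proved in Lemma~\ref{LittleLemma}, and it depends explicitly on the evaluation-ideal computation of (a). Your part (c) is essentially the dual formulation of the paper's argument (apply $\delta^{(r_T)}$ and project to torsion-free parts, versus the paper's dualize--wedge--dualize), and with care it can be made to work; but ``verifying injectivity by localization at minimal primes'' glosses over the real content, namely that the kernel of the composite is identified with the $\cR$-saturation of $\UN^{r_T}_0$ inside $\exprod^{r_T}_{\cR[\cG]}H^1_\Sigma$. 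The paper does this by proving the quotient by $(\exprod^{r_T}\im\iota^\ast)^\ast$ is torsion-free and that $\UN^{r_T}_0$ has finite index in $(\exprod^{r_T}\im\iota^\ast)^\ast$; you would need an argument of comparable precision.
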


\begin{proofbox}
By truncating the exact sequence representing the complex $C^\bullet_\infty$ we obtain an exact sequence
\begin{cdiagram}
   0 \arrow{r} & H^1_{\Sigma, \Iw} ( \bigO_{L, S}, T) \arrow{r} & \Pi \arrow{r}{\psi} & \im \psi \arrow{r} & 0 .
\end{cdiagram}
Since $\im(\psi)$ has trivial $\Gamma^n$-invariants, Theorem \ref{UN-structure-theorem}\,(b) implies that this sequence descends to give an exact sequence
\begin{equation} \label{DescendedSequence}
\begin{tikzcd}
   0 \arrow{r} & \UN^1_n \arrow{r} & \Pi_n \arrow{r} & (\im \psi)_{\Gamma^n} \arrow{r} & 0.
\end{tikzcd}
\end{equation}
Dualising, and using the identification $\Ext^1_{\cR [\cG_n]} ( (\im \psi)_{\Gamma_n}, \cR [\cG_n]) \cong ( (\im \psi)_{\Gamma_n,\tor})^\vee$, we get the exact sequence
\begin{equation} \label{cor-sequence-1}
\begin{tikzcd}
\Pi_n^\ast \arrow{r} & (\UN^1_n)^\ast \arrow{r} & ((\im \psi)_{\Gamma_n, \tor} )^\vee \arrow{r} & 0.
\end{tikzcd}
\end{equation}
We have observed in Remark \ref{structure-theorem-remark} (b) that the assumption $p \nmid |\cG |$ implies that $\UN^1_n$ is $\cR [\cG_n]$-free of rank $r_T$, so the above exact sequence is in fact a free presentation of $((\im \psi)_{\Gamma^n, \tor})^\vee$. This implies that
\begin{align*}
\im \big \{ \exprod^{r_T}_{\cR [\cG_n]} 
\Pi_n^\ast \to \exprod^{r_T}_{\cR [\cG_n]} (\UN^1_n )^\ast \cong \cR [\cG_n] \big \} 
& = \Fitt^0_{\cR [\cG_n]} \left ( (\im \psi)_{\Gamma^n, _\tor}^\vee \right). 
\end{align*}
Combining this with the identification $\UN^{r_T}_n = \bidual^{r_T}_{\cR [\cG_n]} \UN^1_n$ from Theorem \ref{UN-structure-theorem}\,(c) then gives 
\[
\Big \{ f (a) \mid a \in \UN^{r_T}_n, \; f \in \exprod^{r_T}_{\cR [\cG_n]} \Pi_n^\ast \Big\} = \Fitt^{0}_{\cR [\cG_n]} ( (\im \psi)_{\Gamma^n, \tor}^\vee). 
\]
The exact sequence
\begin{cdiagram}
   0 \arrow{r} & \im \psi \arrow{r} & \Pi \arrow{r} & H^1 (C^\bullet_\infty ) \arrow{r} & 0
\end{cdiagram}
gives the exact sequence
\begin{cdiagram}
0 \arrow{r} & H^1 (C^\bullet_\infty)^{\Gamma^n} \arrow{r} & (\im \psi )_{\Gamma^n} \arrow{r} & \Pi_n.
\end{cdiagram}
Since $\Pi_n$ is torsion-free, we deduce that 
\begin{equation} \label{cor-sequence-2}
(\im \psi)_{\Gamma^n, \tor} = H^1 (C^\bullet_\infty)^{\Gamma^n}_\tor = H^2_{\Sigma, \Iw}( \bigO_{L, S}, T)^{\Gamma^n}_\tor. 
\end{equation}

Finally, since the cokernel of $H^1_\Sigma (\bigO_{L_n, S}, T) \hookrightarrow \Pi_n$ is $\cR$-torsion free, the restriction map
$
\Pi_n^\ast \to H^1_\Sigma (\bigO_{L_n, S}, T)^\ast
$
is surjective. This shows that 
\[
\Big \{ f (a) \mid a \in \UN^{r_T}_n, \; f \in \exprod^{r_T}_{\cR [\cG_n]} \Pi_n^\ast \Big\}
= 
\Big\{ f (a) \mid a \in \UN^{r_T}_n, \; f \in \exprod^{r_T}_{\cR [\cG_n]} H^1_\Sigma (\bigO_{L_n, S}, T)^\ast \Big\}
\]
and concludes the proof of (a). For (b), let $C$ denote the cokernel of the map $\UN^{r_T}_n \to \bidual^{r_T}_{\cR [\cG_n]} H^1_\Sigma (\bigO_{L_n, S}, T)$. Then dualising gives a commutative diagram
\begin{cdiagram}[column sep=tiny]
   & \left ( \bidual^{r_T}_{\cR [\cG_n]} H^1_\Sigma (\bigO_{L_n, S}, T) \right)^\ast \arrow{r} \arrow[twoheadrightarrow]{d} & (\UN^{r_T}_n)^\ast \arrow{r} \arrow{d}{\simeq} & \Ext^1_{\cR [\cG_n]} ( C, \cR [\cG_n] ) \arrow{r} \arrow{d} & 0 \\
   0 \arrow{r} & \Fitt^0_{\cR [\cG_n]} ( H^1 (C^\bullet_\infty)^{\Gamma^n, \vee}_\tor ) \arrow{r} &
   \cR [\cG_n] \arrow{r} & \faktor{\cR [\cG_n]}{\Fitt^0_{\cR [\cG_n]} ( H^1 (C^\bullet_\infty)^{\Gamma^n, \vee}_\tor )} \arrow{r} & 0,
\end{cdiagram}
where the middle isomorphism is given by evaluating at a generator of the free rank one module $\UN^{r_T}_n$ and surjectivity of the first of these follows from the surjectivity of the natural map
\[
\exprod^{r_T}_{\cR [\cG_n]} H^1_\Sigma (\bigO_{L_n, S}, T)^\ast \to 
\left ( \bidual^{r_T}_{\cR [\cG_n]} H^1_\Sigma (\bigO_{L_n, S}, T) \right)^\ast, \quad
f \mapsto \{ a \mapsto f (a) \},
\]
itself a consequence of the proof of (\ref{surjectivity-display}). 
Applying the snake lemma to the aforementioned diagram reveals that the rightmost downward map is an isomorphism as well. The claim follows now upon noting that
\[
\Ext^1_{\cR [\cG_n]} ( C, \cR [\cG_n] ) \cong \Ext^1_{\cR} ( C, \cR ) \cong ( C_\tor)^\vee. 
\]
Turning our sights now to (c), we first record that the spectral sequence (\ref{spectral-sequence}) applied to the complex $C^\bullet_\infty$ gives an exact sequence
\begin{cdiagram}
   0 \arrow{r} & \UN^1_0 \arrow{r}{\iota} & H^1_\Sigma (\bigO_{L, S}, T) \arrow{r}{\delta} & H^2_{\Sigma,\Iw} (\bigO_{L, S}, T)^\Gamma \arrow{r} & 0.
\end{cdiagram}
Dualising this sequence, we obtain 
\begin{cdiagram}
  0 \arrow{r} & ( H^2_{\Sigma,\Iw} (\bigO_{L, S}, T)^\Gamma)^\ast \arrow{r} &  H^1_\Sigma (\bigO_{L, S}, T)^\ast \arrow{r} & \im \iota^\ast \arrow{r} & 0, 
\end{cdiagram}
where $\iota^\ast$ denotes the dual map of $\iota$. This induces the exact sequence
\begin{cdiagram}[column sep=tiny]
   ( H^2_{\Sigma, \Iw} (\bigO_{L, S}, T)^\Gamma)^\ast \otimes_{\cR [\cG]} \exprod^{r_T - 1}_{\cR [\cG]} H^1_\Sigma (\bigO_{L, S}, T)^\ast \arrow{r} 
   &  
   \exprod^{r_T}_{\cR [\cG]} H^1_\Sigma (\bigO_{L, S}, T)^\ast 
   \arrow{r}
   &\exprod^{r_T}_{\cR [\cG]} \im \iota^\ast \arrow{r} & 0 .
\end{cdiagram}
Dualising again, we find that there is an exact sequence
\begin{cdiagram}[column sep=tiny]
   0 \arrow{r} & 
   \Big ( \exprod^{r_T}_{\cR [\cG]} \im \iota^\ast \Big)^\ast \arrow{r} & \exprod^{r_T}_{\cR [\cG]} H^1_\Sigma (\bigO_{L, S}, T)
   \arrow{r}
   & \left ( ( H^2_{\Sigma, \Iw} (\bigO_{L, S}, T)^\Gamma)^\ast \otimes_{\cR [\cG]} \exprod^{r_T - 1}_{\cR [\cG]} H^1_\Sigma (\bigO_{L, S}, T)^\ast \right)^\ast . 
\end{cdiagram}
Since $p \nmid |\cG|$, the module $H^1_\Sigma (\bigO_{L, S}, T)^*$ is $\cR [\cG]$-projective, so we have 
\begin{align*}
 & \phantom{=} \left ( ( H^2_{\Sigma, \Iw} (\bigO_{L, S}, T)^\Gamma)^\ast \otimes_{\cR [\cG]} \exprod^{r_T - 1}_{\cR [\cG]} H^1_\Sigma (\bigO_{L, S}, T)^\ast \right)^\ast \\
& = 
( H^2_{\Sigma, \Iw} (\bigO_{L, S}, T)^\Gamma)^{\ast \ast} \otimes_{\cR [\cG]} \left ( \exprod^{r_T - 1}_{\cR [\cG]} H^1_\Sigma (\bigO_{L, S}, T)^\ast \right)^\ast \\
& \cong ( H^2_{\Sigma, \Iw} (\bigO_{L, S}, T)^\Gamma)_\tf 
\otimes_{\cR [\cG]} \exprod^{r_T - 1}_{\cR [\cG]} H^1_\Sigma (\bigO_{L, S}, T).
\end{align*}
It therefore remains to show that $\faktor{\exprod^{r_T}_{\cR [\cG]} H^1_\Sigma (\bigO_{L, S}, T)}{\big ( \exprod^{r_T}_{\cR [\cG]} \im \iota^\ast \big)^\ast}$ is exactly the torsion-free part of $\faktor{\exprod^{r_T}_{\cR [\cG]} H^1_\Sigma (\bigO_{L, S}, T)}{\UN^{r_T}_0}$. \\

The above exact sequence shows that the former quotient is torsion-free. From the exact sequence 
\begin{cdiagram}
   0 \arrow{r} & \im \iota^\ast \arrow{r} & (\UN^1_0)^\ast \arrow{r} & 
   \Ext^1_{\cR [\cG]} ( H^2_{\Sigma, \Iw} (\bigO_{L, S}, T)^\Gamma, \cR [\cG]) \arrow{r} & 0
\end{cdiagram}
we see that $\im \iota^\ast$ has finite index inside $(\UN^1_0)^\ast$. Finally, from the diagram
\begin{cdiagram}
   \mathcal{Q} \otimes_\cR \exprod^{r_T}_{\cR [\cG]} \im \iota^\ast \arrow{r}{\simeq} & \mathcal{Q} \otimes_\cR \exprod^{r_T}_{\cR [\cG]}  (\UN^1_0)^\ast \\
   \exprod^{r_T}_{\cR [\cG]} \im \iota^\ast \arrow{u} \arrow{r} & \exprod^{r_T}_{\cR [\cG]} (\UN^1_0)^\ast \arrow{u}
\end{cdiagram}
we get, via dualising, the commutative diagram
\begin{cdiagram}
   \mathcal{Q} \otimes_\cR \Big ( \exprod^{r_T}_{\cR [\cG]} \im \iota^\ast \Big )^\ast  & \mathcal{Q} \otimes_\cR \exprod^{r_T}_{\cR [\cG]}  \UN^1_0 \arrow{l}{\simeq} \\
   \Big ( \exprod^{r_T}_{\cR [\cG]} \im \iota^\ast \Big)^\ast \arrow[hookrightarrow]{u}  & \exprod^{r_T}_{\cR [\cG]} \UN^1_0 \arrow[hookrightarrow]{u} \arrow{l}
\end{cdiagram}
from which we deduce that $\UN^{r_T}_0 = \exprod^{r_T}_{\cR [\cG]} \UN^1_0$  injects with finite index into $\big ( \exprod^{r_T}_{\cR [\cG]} \im \iota^\ast \big)^\ast$. 
\end{proofbox}

\begin{rk}
A curious consequence of the proof of Proposition \ref{Dodgy-Proposition} is that the $\Z_p$-torsion submodule of $H^2_{\Sigma, \Iw} (\bigO_{L, S}, T)^{\Gamma}$ is necessarily a cyclic $\cR [\cG]$-module if $r_T = 1$ and $p \nmid | \cG|$. Indeed, in this situation $\UN^1_0$ is $\cR [\cG]$-free of rank one, so (\ref{cor-sequence-1}) and (\ref{cor-sequence-2}) imply that  $H^2_{\Sigma, \Iw} (\bigO_{L, S}, T)^{\Gamma, \vee}_\tor$ is cyclic. 
The ideal $\Fitt^0_{\cR [\cG]} ( H^2_{\Sigma, \Iw}( \bigO_{L, S}, T)^{\Gamma, \vee}_\tor )$ is principal and generated by a non-zero divisor $x$, say (see \cite[Prop.\@ 2.2.2]{Cornelius}), hence it is then immediate from the exact sequence
\begin{cdiagram}
0 \arrow{r} & \cR [\cG] \arrow[r, "\cdot x"] & \cR [\cG] \arrow{r} & \faktor{\cR [\cG]}{(x)} \arrow{r} & 0
\end{cdiagram}
that $\Ext^1_{\cR [\cG]} (\faktor{\cR [\cG]}{(x)}, \cR [\cG]) = H^2_{\Sigma, \Iw} (\bigO_{L, S}, T)^{\Gamma}_\tor$ is $\cR [\cG]$-cyclic. 
\end{rk}

\markboth{Applications to arithmetic}{Tate twist $T=\Z_p (1)$}
\section{Applications to arithmetic}

In this section we exemplify how one can use the general framework laid out in the previous section to derive concrete arithmetic consequences. 

\subsection{Tate twist $T = \Z_p (1)$}

We shall first specialise to the representation $T = \Z_p (1)$. After an appropriate choice of set $\Sigma$, this representation
satisfies Hypothesis \ref{main-hypothesis} as observed in Example \ref{RepExamples}\,(b) and so the general results from the last section are applicable in this situation. 
We introduce the following additional notation. \medskip \\
Let $F$ be a number field. For finite sets  of places $V$ and $\Sigma$ of $F$ satisfying $V \cap \Sigma = \emptyset$, $S_\infty (K) \cap \Sigma = \emptyset$, and $S_p(F) \subseteq V$ we denote
\begin{itemize}
    \item $U_{F, V, \Sigma} = \Z_p \otimes_\Z \bigO_{F, V, \Sigma}^\times \cong H^1_\Sigma ( \bigO_{F, S}, \Z_p (1))$ the $p$-completion of the $(V, \Sigma)$-unit group of $F$, 
    \item $A_{V,\Sigma} (F) = \Z_p \otimes_\Z \cl_{V,\Sigma} (F)$ the $p$-Sylow subgroup of the $V$-ray class group mod $\Sigma$ of $F$,
    \item $Y_{F, V} = \bigoplus_{v \in V} \Z_p$ the free $\Z_p$-module on the set of places contained in $V$,
    \item $X_{F, V}$ the kernel of the natural augmention map $Y_{F, V} \to \Z_p$.
\end{itemize}
When $\Sigma = \emptyset$, we omit the reference to $\Sigma$ from the above notation. If $F_\infty = \bigcup_{n \geq 0} F_n$ defines a $\Z_p$-extension of $F$, then for any of the objects $\square$ above we denote by $\square^\infty$ the projective limit over $n$ of the respective objects $\square$ for $F_n$, where the limits are taken with respect to the natural transition maps in each situation.

\subsubsection{Iwasawa Main Conjecture}

In this section we give a straightforward application of the result of Theorem \ref{pairing-theorem}. In particular, we show that the isomorphism of Theorem \ref{pairing-theorem}\,(a) refines the (plus-part of the) classical Iwasawa Main Conjecture.\medskip \\
For any integer $m \geq 1$, let $\xi_m = e^{2 \pi i / m}$, which we regard as an element of $\overline{\Q}$ via a fixed embedding $\overline{\Q} \hookrightarrow \C$. 
Take $f >0$ to be an integer such that $f \not \equiv 2 \mod 4$ and
$p \nmid f$. Then for every $n \geq 0$ we set $L_n = \Q (\xi_{f p^{n + 1}})$ and note that the collection of maximal totally real subfields $L_n^+$ constitutes a $\Z_p$-extension that satisfies the assumptions of \S\,\ref{set-up-section}. We will therefore resume the notation introduced there and hope that this does not cause any confusion. \medskip \\
For every $n$ we denote by $\Cyc_n$ the group of cyclotomic units of $L_n$. In other words, 
\begin{align*}
    \Cyc_n := \langle -1, 1 - \xi_d : d \mid f p^{n+1} \rangle_{\Z_p [\cG_n]} \cap \bigO_{L_n}^\times. 
\end{align*}
We then set $\Cyc^\infty := \varprojlim_n \Cyc_n$, where the limit is taken with respect to the norm maps.

\begin{thm} \label{IMC-refinement}
    Let $\chi \in \widehat{\cG}$ be an even character and assume that $p \nmid [L : \Q]$, where $L = L_0$. 
    Then there is an isomorphism of $\Lambda_\chi := \ZZ_p(\im(\chi))\llbracket \Gamma \rrbracket$-modules
    \begin{align*}
        \faktor{U^{\infty,\chi}}{\Cyc^{\infty,\chi}} \cong \alpha  \left ( \faktor{\Lambda_\chi}{\cchar_{\Lambda_\chi}(A^{\infty,\chi})} \right),
    \end{align*}
    where $(-)^\chi$ is the functor $- \otimes_{\ZZ_p[\cG]} \ZZ_p(\im(\chi)) = - \otimes_\bLambda \Lambda_\chi$, and $\alpha (-) = \Ext^1_{\Lambda_\chi} ( -, \Lambda_\chi)$ denotes the Iwasawa adjoint. 
\end{thm}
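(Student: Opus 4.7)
The strategy is to specialise Theorem \ref{pairing-theorem}(b) to the triple $(T, L_\infty, \Sigma) = (\ZZ_p(1), L_\infty, \varnothing)$ and then pass to $\chi$-isotypic components. Since $p$ is odd and $L$ is totally real, $\mu_{p^\infty}(L_n)$ vanishes for every $n$, so Hypothesis \ref{main-hypothesis}(1) is satisfied with $\Sigma = \varnothing$; the remaining conditions follow from the discussion in Example \ref{RepExamples}(a). In this specialisation the basic rank is $r_T = 1$, as $Y_\QQ(\ZZ_p(1)) = H^0(\RR, \ZZ_p) \cong \ZZ_p$.

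Theorem \ref{pairing-theorem}(b) then produces a perfect pairing of $\bLambda$-modules, and perfectness combined with the identification (\ref{double-ext}) (applied to the torsion modules on both sides, which have no non-zero pseudo-null submodules) gives a canonical isomorphism
\begin{equation*}
\frac{\NS^1}{\NS^b} \;\cong\; \alpha\!\left(\frac{\bLambda}{\Fitt^0_\bLambda(H^2_\Iw(\cO_{L,S},\ZZ_p(1)))^{\ast\ast}}\right).
\end{equation*}
The hypothesis $p \nmid [L:\QQ]$ yields a product decomposition $\bLambda \cong \prod_\chi \Lambda_\chi$ with each factor a regular local ring. The functor $M \mapsto M^\chi := M \otimes_\bLambda \Lambda_\chi$ is exact, and commutes with $\alpha$ since $\Lambda_\chi$ is a direct factor (hence flat) over $\bLambda$. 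The proof is then reduced to establishing the following three identifications on the $\chi$-component:
\begin{enumerate}[label=(\roman*)]
    \item $(\NS^1)^\chi \cong U^{\infty,\chi}$: by definition $\NS^1 = (U^\infty_S)^{\ast\ast}$, where $U^\infty_S$ denotes the inverse limit of the $p$-completed $S$-units; regularity of $\Lambda_\chi$ ensures that reflexive $\Lambda_\chi$-modules equal their double duals, and one controls the difference between $U^\infty$ and $U^\infty_S$ on the $\chi$-component via the standard exact sequence relating global and $S$-units together with the fact that the $p$-adic places of $L$ are totally ramified in $L_\infty/L$;
    \item $(\NS^b)^\chi \cong \Cyc^{\infty,\chi}$: this is Proposition \ref{eimc-proposition}, whose hypothesis (the validity of the eIMC in the cyclotomic setting) is supplied by the theorem of Burns and Greither \cite{BurnsGreither}, together with the well-known fact that in this setting the Rubin--Stark elements are (a suitable normalisation of) the cyclotomic units;
    \item $\bigl(\bLambda/\Fitt^0_\bLambda(H^2_\Iw)^{\ast\ast}\bigr)^\chi \cong \Lambda_\chi/\cchar_{\Lambda_\chi}(A^{\infty,\chi})$: regularity of $\Lambda_\chi$ implies $\Fitt^0_{\Lambda_\chi}(M)^{\ast\ast} = \cchar_{\Lambda_\chi}(M)$ for any torsion $M$, and the exact sequence from Example \ref{RepExamples}(a) identifies $H^2_\Iw(\cO_{L,S}, \ZZ_p(1))$ with an extension of the inverse limit of $S$-ray class groups by the $X$-term, both of which are related to $A^\infty$ on the $\chi$-component by standard arguments (with differences being pseudo-null and thus invisible under characteristic ideals).
\end{enumerate}
Combining (i)--(iii) with the displayed isomorphism yields the claimed isomorphism of $\Lambda_\chi$-modules.

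The main obstacle lies in executing the identifications (i) and (iii) explicitly, as they require a careful analysis of contributions coming from the primes of $L$ above $p$, and verifying that the $S$-modifications on the unit side and on the class group side are compensated by one another on each $\chi$-component. This bookkeeping, while classical in Iwasawa theory, is the most delicate aspect of the argument.
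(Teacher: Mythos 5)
Your high-level strategy (apply Theorem \ref{pairing-theorem} for $T=\ZZ_p(1)$, pass to $\chi$-components, identify the three sides) matches the paper's, but the reductions you propose in (ii) and (iii) are false as stated, and the error is exactly where the real work lies. Concerning (ii): by Proposition \ref{eimc-proposition} and the Burns--Greither theorem, $\NS^b$ is generated over $\bLambda$ by the top-level Euler system element $\eta_f=(1-\xi_{fp^{n+1}})_n$, so $(\NS^b)^\chi=\langle e_\chi\eta_f\rangle_{\Lambda_\chi}$. On the other hand, $\Cyc^{\infty,\chi}$ is generated by $e_\chi t^{\delta_\chi}\eta_d$ where $d$ is the conductor of $\chi$ (all $e_\chi\eta_m$ with $d\nmid m$ vanish, and $e_\chi\eta_k$ with $d\mid k\mid f$ reduce to multiples of $\eta_d$ via the norm relations). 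The norm relation $e_\chi\eta_f = [\QQ(\xi_f):\QQ(\xi_d)]^{-1}\,e_\chi\theta_d\,\eta_d$ with $\theta_d=\prod_{l\mid f,\,l\nmid d}(1-\Frob_l^{-1})$ shows $(\NS^b)^\chi$ and $\Cyc^{\infty,\chi}$ differ by the Euler factor $\theta_d$ and a power of $t=\gamma-1$; these are genuine non-unit elements of $\Lambda_\chi$, not pseudo-null discrepancies.

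Concerning (iii), the analogous gap appears: the exact sequence $0\to A_S^\infty\to H^2_\Iw(\cO_{L,S},\ZZ_p(1))\to X_V^\infty\to 0$ shows $\cchar_{\Lambda_\chi}(H^2_\Iw^\chi) = \cchar_{\Lambda_\chi}(A_S^{\infty,\chi})\cdot\cchar_{\Lambda_\chi}(X_V^{\infty,\chi})$, and $\cchar_{\Lambda_\chi}(X_V^{\infty,\chi})$ is generated by $e_\chi t^{\epsilon_\chi}\theta_d$ --- again a non-trivial factor, not a pseudo-null correction. You cannot dismiss these discrepancies by appealing to invisibility ``under characteristic ideals'', because the theorem asserts an actual module isomorphism, not an equality of characteristic ideals; even if one only wanted the latter, $X_V^{\infty,\chi}$ is not pseudo-null. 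The essential point of the paper's argument, which your outline omits, is that the two $\theta_d$-factors (and the $t$-powers $\delta_\chi$, $\epsilon_\chi$) cancel against one another via an explicit snake-lemma diagram built from the Euler system norm relations and the explicit description of the pairing; one then invokes the classical Mazur--Wiles Main Conjecture once more to identify the kernel of multiplication by a generator of $\cchar_{\Lambda_\chi}(A^{\infty,\chi})$. Without carrying out this cancellation you do not obtain the claimed isomorphism, so your proposal has a substantive gap rather than a routine bookkeeping step.
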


\begin{rk} \label{IMC-refinement-remark}
Theorem \ref{IMC-refinement} can be seen as a refinement of the classical Iwasawa Main Conjecture for the following reason: \cite[Prop.\@ 5.5.13]{NSW} gives a pseudo-isomorphism $\alpha  (\Lambda_\chi/\text{char}_{\Lambda_\chi}(A^{\infty,\chi}))
\approx (\Lambda_\chi/\cchar_{\Lambda_\chi}(A^{\infty, \chi}))^\circ
$ and so taking characteristic ideals on both sides of the isomorphism stated in Theorem \ref{IMC-refinement} yields
\[
\mathrm{char}_{\Lambda_\chi}\left ( \faktor{U^{\infty, \chi}}{\Cyc^{\infty, \chi}} \right) = \mathrm{char}_{\Lambda_\chi}(A^{\infty,\chi}).
\]
This is one form of the classical Iwasawa Main Conjecture first proved by Mazur and Wiles \cite{MazurWiles} (see also \cite[Thm.~5.1]{LangRubin}). That is, Theorem \ref{IMC-refinement} amounts to the assertion that not only the characteristic ideals of the aforementioned modules agree but that in fact their $\Lambda_\chi$-module structures are intimately related. 
\end{rk}

\textit{Proof of Theorem \ref{IMC-refinement}:}
    At the outset we remark that the equivariant Iwasawa Main Conjecture is known to be valid (by the work of Burns and Greither \cite{BurnsGreither}) for the data $(L_\infty | \Q, S \cup S_\infty (\Q), \varnothing, p)$, where $S = \{ v \mid f p \}$, and so Proposition \ref{eimc-proposition} implies that 
    \[
    \NS^b(\ZZ_p(1), L^+_\infty) = \langle e^+ \eta_{f} \rangle_{\bLambda},
    \]
    where for each $m \mid f$ we put $\eta_m = ( 1 - \xi_{m p^{n + 1}})_{n \geq 0} \in U^\infty_S$ and use the idempotent $e^+ = \frac12 (1 + c)$ with complex conjugation $c \in \cG : = \gal{L}{\Q}$. \medskip \\
    Given this, part (a) of Theorem \ref{pairing-theorem} implies that there is an isomorphism
    \begin{align*}
        e^+ \left ( \faktor{U_{S}^\infty}{\langle \eta_{f} \rangle} \right ) \cong
        e^+
        \Ext^1_\bLambda(
        \faktor{\bLambda}{ \Fitt_{\bLambda}^0 
    (H^2_\Iw (\bigO_{L, S}, \Z_p (1))^{\ast \ast}}
    , \ \bLambda).
    \end{align*}
    By assumption the order of $\cG$ is invertible in $\ZZ_p$ and hence $\Lambda_\chi$ is a projective $\bLambda$-module. In particular, the functor $(-)^\chi$ is exact and thus we obtain 
    an isomorphism of $\Lambda_\chi$-modules
    \begin{align*}
        \faktor{U_S^{\infty,\chi}}{\langle e_\chi \eta_{f}\rangle_{\bLambda}} & \cong
        \Ext^1_{\Lambda_\chi}
        \left ( \faktor{\Lambda_\chi}{\Fitt^0_{\Lambda_\chi} (H^2_\Iw (\bigO_{L, S}, \Z_p (1))^\chi)^{\ast \ast}},
        \Lambda_\chi
        \right) \\
        & =
        \Ext^1_{\Lambda_\chi}
        \left ( \faktor{\Lambda_\chi}{\cchar_{\Lambda_\chi} (H^2_\Iw (\bigO_{L, S}, \Z_p (1))^\chi)},
        \Lambda_\chi
        \right)
        .
    \end{align*}

    The explicit description of the pairing given in Theorem \ref{pairing-theorem} then shows that this isomorphism is induced by the map
    \begin{equation} \label{TheIsom}
    e^+ U^{\infty}_S \to e^+ \Hom_{\bLambda} ( \faktor{\bLambda}{\Fitt^0_\bLambda ( H^2_\Iw (\bigO_{L, S}, \Z_p (1))^{**}}, 
    \; \faktor{Q (\bLambda)}{\bLambda} )
    \quad e^+ u \mapsto \{ \lambda \mapsto e^+ (\lambda \cdot \eta_{f}^\ast (u)) \},
    \end{equation}
    where $\eta^\ast_{f} \in Q (\bLambda) \otimes_\bLambda ( U^{\infty}_S)^\ast$ is the dual of $\eta_{f}$. We next note that the Euler system norm relation in this case reads
    \begin{equation} \label{ESNormRelations}
    N_{ \Q ( \xi_{f}) | \Q (\xi_{m})} ( \eta_k)
    = 
    [\Q ( \xi_{f}) : \Q (\xi_{k})] \cdot \Big ( 
    \prod_{\substack{l \mid k \\ l \nmid m}} (1 - \Frob_l^{-1}) \Big ) \cdot \eta_m
    \end{equation}
    for any pair of integers $(k,m)$ satisfiying the divisibility relation $m \mid k \mid f$. Now let $d$ be the conductor of $\chi$, then for $d \mid k \mid f$ we get that
    \[
    e_\chi \eta_k = [\Q ( \xi_{k}) : \Q (\xi_{d})]^{-1} \cdot e_\chi \Big ( 
    \prod_{\substack{l \mid k \\ l \nmid d}} (1 - \Frob_l^{-1}) \Big ) \cdot \eta_d
    \]
    is a $\bLambda$-multiple of $\eta_d$.
    For any $m \nmid d$, on the other hand, we have
    \[
    e_\chi \eta_m = [\Q ( \xi_{f}) : \Q (\xi_{m})]^{-1} \cdot e_\chi N_{\Q ( \xi_{f}) | \Q (\xi_{m})} ( \eta_m) = 0
    \]
    since $\chi$ is non-trivial on $\gal{\Q ( \xi_{f})}{\Q (\xi_m)}$. 
    Let $t = (\gamma - 1)$ for a topological generator $\gamma$ of $\Gamma$, then the above reasoning shows that
    \[
    \Cyc^{\infty, \chi} = 
    \langle e_\chi t^{\delta_\chi} \eta_d \rangle_{\Lambda_\chi} 
   \qquad \text{ for } \delta_\chi = \begin{cases}
   0 & \text{ if } \chi \neq 1, \\
   1 & \text{ if } \chi =1,
   \end{cases}
    \]
    where we have used the fact that, if $d \neq 1$, both $\eta_d, t \eta_1 \in U^\infty$ since $p \nmid d$. 
    Moreover, from (\ref{ESNormRelations}) we deduce that the isomorphism (\ref{TheIsom}) maps $e_\chi \eta_d$ onto the element corresponding to multiplication by $e_\chi \theta_d^{-1}$, where
    \begin{equation} \label{Image}
    \theta_d = 
    \prod_{\substack{l \mid f \\ l \nmid d}} (1 - \Frob_l^{-1}).
       \end{equation}
    Write now $V = S\setminus S_\infty$. Then from the exact sequence
    \begin{cdiagram}
    0 \arrow{r} & 
    A_S^\infty \arrow{r} & H^2_\Iw (\bigO_{L, S}, \Z_p (1)) \arrow{r} & X_V^{\infty} \arrow{r} & 0 
    \end{cdiagram}
    one obtains, using the fact
    that characteristic ideals are multiplicative, an equality
    \begin{align*}
        \cchar_{\Lambda_\chi} ( H^2_\Iw (\bigO_{L, S}, \Z_p (1))^\chi)  
        & = \cchar_{\Lambda_\chi} ( A_S^{\infty, \chi})
        \cdot \cchar_{\Lambda_\chi} ( X^{\infty, \chi}_V ).
    \end{align*}
    An explicit calculation furthermore shows (\textit{c.f.}\@ \cite[Lem. 5.5]{Flach} but note that in our case $p \in S$) that
    \[
    \cchar_{\Lambda_\chi} ( X^{\infty, \chi}_V) = 
    \Big (e_\chi t^{\epsilon_\chi} \prod_{\substack{l \mid f \\ l \nmid d}} (1 - \Frob_l^{-1} ) \Big ) = (e_\chi t^{\epsilon_\chi }\theta_d),
    \]
    where 
    \[
    \epsilon_\chi = \begin{cases} 1 & \text{ if } \chi (p) = 1 \text{ and } \chi \neq 1, \\
    0 & \text{ otherwise}.
    \end{cases}
    \]
      Hence we have the exact sequence
    \begin{cdiagram}[column sep=small]
    0 \arrow{r} & \faktor{\Lambda_\chi}{t^{\epsilon_\chi} \cchar_{\Lambda_\chi} (A_S^{\infty,\chi})} \arrow{r}{\cdot \theta_d} & 
    \faktor{\Lambda_\chi}{\cchar_{\Lambda_\chi} H^2_\Iw (\bigO_{L, S}, \Z_p (1))^\chi} 
    \arrow{r} & 
    \faktor{\Lambda_\chi}{t^{-\epsilon_\chi} \cchar_{\Lambda_\chi} (X_V^\infty)} \arrow{r} & 0
    \end{cdiagram}
    that combines with the statement (\ref{Image}) to give a commutative diagram
    \begin{cdiagram}[column sep=tiny]
    0 \arrow{r} & \faktor{\langle e_\chi \eta_d \rangle}{\langle e_\chi \eta_{f} \rangle} 
    \arrow{d}{\simeq} \arrow{r} &
    \faktor{U^{\infty, \chi}_S}{\langle e_\chi \eta_{f} \rangle} 
    \arrow{r} \arrow{d}{\simeq} & 
    \faktor{U^{\infty, \chi}_S}{\langle e_\chi \eta_d \rangle}
\arrow[dashed]{d} \arrow{r} & 
0 \\
0 \arrow{r} & \alpha \left(
\displaystyle\faktor{\Lambda_\chi}{t^{-\epsilon_\chi} \cchar_{\Lambda_\chi} (X_V^\infty)} \right ) 
\arrow{r} &
\alpha \left(
\displaystyle
 \faktor{\Lambda_\chi}{\cchar_{\Lambda_\chi} (H^2_\Iw (\bigO_{L, S}, \Z_p (1))^\chi)} 
 \right )
 \arrow{r}{\cdot \theta_d}
 &
 \alpha \left(
 \displaystyle
 \faktor{\Lambda_\chi}{t^{\epsilon_\chi} \cchar_{\Lambda_\chi} (A_S^{\infty, \chi})}
 \right)
 \arrow{r} & 
 0
    \end{cdiagram}

    An application of the snake lemma then implies that the right hand map is an isomorphism.
    Now, observe that the module 
     $\faktor{U^{\infty, \chi}}{\langle e_\chi t^{\delta_\chi} \eta_d \rangle} =  \faktor{U^{\infty, \chi}}{\Cyc^{\infty, \chi}}$ is $\Lambda_\chi$-cyclic since $U^{\infty, \chi}$ is $\Lambda_\chi$-free of rank one. The aforementioned quotient is furthermore $\Z_p$-torsion free as it injects into
     $\faktor{U_S^{\infty, \chi}}{\langle e_\chi \eta_d \rangle} \cong \alpha (  \faktor{\Lambda_\chi}{t^{\epsilon_\chi} \cchar_{\Lambda_\chi} (A_S^{\infty, \chi})})$. It follows that 
     \begin{align*}
     \Ann_{\Lambda_\chi} \left ( \faktor{U^{\infty, \chi}}{\Cyc^{\infty, \chi}} \right ) & = 
     \Fitt^0_{\Lambda_\chi} \left ( \faktor{U^{\infty, \chi}}{\Cyc^{\infty, \chi}} \right ) = \cchar_{\Lambda_\chi} \left ( \faktor{U^{\infty, \chi}}{\Cyc^{\infty, \chi}} \right ) \\
     & = 
     \cchar_{\Lambda_\chi} ( A^{\infty, \chi} ),
     \end{align*}
    where we have used the classical Iwasawa Main Conjecture \cite[Thm. 5.1]{LangRubin} to establish the final equality. By using an explicit description analogous to (\ref{TheIsom}), we see that the image of $\faktor{U^{\infty, \chi}}{\Cyc^{\infty, \chi}}$
    under the isomorphism 
    \begin{equation} \label{AnotherIsom}
    \faktor{U^{\infty, \chi}_S}{\langle e_\chi \eta_d \rangle}
    \cong 
    \alpha \left(
 \displaystyle
 \faktor{\Lambda_\chi}{t^{\epsilon_\chi} \cchar_{\Lambda_\chi} (A_S^{\infty, \chi})}
 \right)
    \end{equation}
    coincides with the kernel of multiplication by a generator of $ \cchar_{\Lambda_\chi} ( A^{\infty, \chi} )$ on $\alpha \left(
 \faktor{\Lambda_\chi}{\cchar_{\Lambda_\chi} (A_S^{\infty, \chi})}
 \right)$. An argument entirely similar to the one utilised above show that this kernel is exactly $\alpha (
 \faktor{\Lambda_\chi}{\cchar_{\Lambda_\chi} (A^{\infty, \chi})})$.
Hence the isomorphism (\ref{AnotherIsom}) restricts to give the isomorphism claimed in the statement of Theorem \ref{IMC-refinement}. 
    \qed

\subsubsection{Greenberg's conjecture}

In this subsection we will rely on the results of Appendix \ref{AppendixIwasawa}. We resume the notation and assumptions of \S\,\ref{set-up-section}.

\begin{prop} \label{GreenbergCriterion}
Assume the $\mu$-invariant of $A_{S, \Sigma}^\infty$ vanishes. The following are equivalent:
\begin{liste}
\item The module $A_{S, \Sigma}^\infty$ is finite,
\item there is an isomorphism
\[
\faktor{\NS^{r_T} ( \Z_p (1), L_\infty)}{\NS^b (\Z_p (1), L_\infty)} \cong \Ext^1_\bLambda \Big ( \faktor{\bLambda}{\Fitt^0_\bLambda ( X_{S \setminus S_\infty}^\infty)},\bLambda \Big).
\]
\end{liste}
\end{prop}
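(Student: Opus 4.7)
\textit{Proof proposal.}
The plan is to reduce both statements to equivalent assertions about Fitting ideals, and then to compare these using the short exact sequence recalled in Example \ref{RepExamples}\,(a), namely
\[
0 \to A^\infty_{S, \Sigma} \to H^2_{\Sigma, \Iw}(\cO_{L, S}, \Z_p(1)) \to X^\infty_{S \setminus S_\infty} \to 0.
\]
For brevity, write $A = A^\infty_{S, \Sigma}$, $H = H^2_{\Sigma, \Iw}(\cO_{L, S}, \Z_p(1))$ and $X = X^\infty_{S \setminus S_\infty}$.

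The first step is to recall that the identification (\ref{double-ext}) from the proof of Theorem \ref{pairing-theorem}, combined with the isomorphism $\Ext^1_\bLambda(\bLambda/I, \bLambda) \cong (\bLambda/I)^\vee$ valid for any ideal $I$ of $\bLambda$ making $\bLambda/I$ torsion, yields the canonical identification
\[
\faktor{\NS^{r_T}(\Z_p(1), L_\infty)}{\NS^b(\Z_p(1), L_\infty)} \cong \Ext^1_\bLambda \Big(\faktor{\bLambda}{\Fitt^0_\bLambda(H)^{**}}, \bLambda\Big).
\]
Given the involutivity of $(-)^\vee$ on $\bLambda$-torsion modules, the statement (b) is therefore equivalent to the ideal-theoretic equality $\Fitt^0_\bLambda(H)^{**} = \Fitt^0_\bLambda(X)$ in $\bLambda$.

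The second step is to observe that under the hypothesis $\mu(A) = 0$, $A$ is finite if and only if $A$ is pseudo-null as a $\bLambda$-module, which follows directly from the structure theory of finitely generated $\Lambda$-modules (given that $\bLambda$ is module-finite over $\Lambda$). Localising the above short exact sequence at any height-one prime $\mathfrak{p}$ of $\bLambda$, over which $\bLambda_\mathfrak{p}$ is one-dimensional and Gorenstein, and exploiting the additivity of Fitting ideals of torsion modules at such primes yields the chain of equivalences
\[
A_\mathfrak{p} = 0 \quad \Leftrightarrow \quad H_\mathfrak{p} = X_\mathfrak{p} \quad \Leftrightarrow \quad \Fitt^0_\bLambda(H)_\mathfrak{p} = \Fitt^0_\bLambda(X)_\mathfrak{p}.
\]
Since reflexive ideals in $\bLambda$ are determined by their localisations at height-one primes (\cite[Lem.\@ C.13]{Sakamoto20}), passing over all such $\mathfrak{p}$ gives: $A$ is pseudo-null if and only if $\Fitt^0_\bLambda(H)^{**} = \Fitt^0_\bLambda(X)^{**}$.

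The final and most delicate step is to verify that $\Fitt^0_\bLambda(X)$ is itself reflexive. I would approach this by exhibiting $X$ as a torsion $\bLambda$-module of projective dimension at most one, leveraging the short exact sequence $0 \to X \to Y^\infty \to \Z_p \to 0$ (with $Y^\infty = \varprojlim_n Y_{L_n, S \setminus S_\infty}$) together with the standing assumption that no finite place of $K$ splits completely in $L_\infty$, which forces the decomposition groups at places in $S \setminus S_\infty$ to intersect $\Gamma$ non-trivially. Such a quadratic presentation $0 \to \bLambda^n \to \bLambda^n \to X \to 0$ forces $\Fitt^0_\bLambda(X)$ to be principal, generated by a non-zero-divisor, and hence reflexive. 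Combining the three steps then shows that (a) and (b) are each equivalent to $\Fitt^0_\bLambda(H)^{**} = \Fitt^0_\bLambda(X) = \Fitt^0_\bLambda(X)^{**}$, completing the proof. The main obstacle I foresee is precisely this reflexivity step, since it rests on a delicate control of the projective dimension of $X^\infty$ using the local structure of each prime of $S \setminus S_\infty$ and its decomposition in $L_\infty | K$.
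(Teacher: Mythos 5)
Your plan correctly identifies the key input (Theorem \ref{pairing-theorem}\,(a), the short exact sequence $0 \to A^\infty_{S,\Sigma} \to H \to X^\infty_{S\setminus S_\infty} \to 0$, the localisation criterion at height-one primes and Lemma \ref{IwasawaInvariants}), and Step 2 is essentially the same reduction the paper carries out. However there is a genuine gap, and in fact a false claim, in the way you try to package the equivalence.

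The problem is your insistence on converting statement (b) into an honest ideal equality $\Fitt^0_\bLambda(H)^{\ast\ast} = \Fitt^0_\bLambda(X^\infty_{S\setminus S_\infty})$, which then forces you to prove in Step 3 that $\Fitt^0_\bLambda(X^\infty_{S\setminus S_\infty})$ is reflexive. First, this conversion is not valid as stated: applying $\Ext^1_\bLambda(-,\bLambda)$ twice to $\bLambda/\Fitt^0_\bLambda(X)$ only recovers it \emph{modulo its maximal finite submodule}, so from (b) you can deduce $\Fitt^0_\bLambda(H)^{\ast\ast} = \Fitt^0_\bLambda(X)^{\ast\ast}$ but not the un-reflexivised equality. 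Second, the reflexivity you want is not merely ``delicate'' but false in general: $X^\infty_{S\setminus S_\infty}$ is built from modules of the form $\Z_p[\gal{L_\infty}{K}/D_v]$, and whenever $p \mid |\cG|$ and a decomposition group $D_v$ contains both a non-trivial $p$-subgroup of $\cG$ and an open subgroup of $\Gamma$ (so is not topologically cyclic), this module has infinite projective dimension over $\bLambda$; there is then no reason for $\Fitt^0_\bLambda(X^\infty_{S\setminus S_\infty})$ to be reflexive, and your envisaged quadratic presentation $0 \to \bLambda^n \to \bLambda^n \to X \to 0$ does not exist. Finally, in your chain of equivalences $A_\p = 0 \Leftrightarrow H_\p = X_\p \Leftrightarrow \Fitt^0(H)_\p = \Fitt^0(X)_\p$, the implication from equal Fitting ideals back to $A_\p = 0$ uses multiplicativity of Fitting ideals which one only has over the discrete valuation rings $\bLambda_\p$ at regular primes; at singular primes you must instead invoke the assumed vanishing $\mu(A^\infty_{S,\Sigma}) = 0$ (together with the automatic $\mu$-vanishing of $X^\infty$) to kill everything locally.

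The fix is to abandon Step 3 entirely and argue, as the paper does, at the level of pseudo-isomorphisms: from (a) one shows that the natural surjection $\bLambda/\Fitt^0_\bLambda(H) \twoheadrightarrow \bLambda/\Fitt^0_\bLambda(X)$ has finite kernel, and then observes that $\Ext^1_\bLambda(-,\bLambda)$ annihilates finite modules (since $\bLambda$ is two-dimensional Cohen--Macaulay), so passing to $\Ext^1$ upgrades this and the inclusion $\Fitt^0(H) \hookrightarrow \Fitt^0(H)^{\ast\ast}$ to honest isomorphisms; combined with (\ref{double-ext}) this yields (b) directly. For (b) $\Rightarrow$ (a), taking $\Ext^1_\bLambda(-,\bLambda)$ of the given isomorphism and comparing with Theorem \ref{pairing-theorem}\,(a) produces a pseudo-isomorphism $\bLambda/\Fitt^0_\bLambda(H) \approx \bLambda/\Fitt^0_\bLambda(X)$, and localising at regular height-one primes then forces $A_\p = 0$ there by the length count over a DVR. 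No reflexivity of $\Fitt^0_\bLambda(X^\infty_{S\setminus S_\infty})$ is used or needed anywhere.
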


\begin{rk} \label{greenberg-rk}
\begin{liste}
\item If $L$ is a totally real field, we may take $\Sigma = \emptyset$ since $p$ is odd. Then $A_{S, \Sigma}^\infty$ agrees with the inverse limit of $S$-class groups $A_S^\infty$. In this situation, Greenberg has conjectured \cite{Greenberg} that both the $\mu$ and $\lambda$-invariant of the $\Lambda$-module $A^\infty$ vanish, i.e.\@ that $A^\infty$ is finite, and this clearly implies that $A_S^\infty$ is finite. 
In certain situations on can show that the converse of this implication holds. This is the case, for example, if either $L$ validates Leopoldt's conjecture or has exactly one prime that ramifies in $L_\infty$ (see \cite[\S\,4]{Iwasawa}). 
\item Theoretical evidence for Greenberg's conjecture is still very sparse. The only general class of fields known to satisfy the conjecture are fields $L$ with a unique prime above $p$ and such that $A_{S_p} (L) = 1$ (in this case Greenberg's conjecture follows from Nakayama's Lemma). However, there are many explicit examples giving evidence for the conjecture, starting with Greenberg's original article \cite[\S\,8]{Greenberg}. For example, Kraft and Schoof \cite{KraftSchoof} have numerically verified the conjecture for $p = 3$ and all real quadratic fields $\Q ( \sqrt{f})$ such that $f \not \equiv 1 \mod 3$ and $f < 10000$. 
\item The case $r_T = 1$ of Proposition \ref{GreenbergCriterion} is classical and well-known. To the best of the knowledge of the authors, a result of this shape first appeared in \cite[Lem.\@ 1]{Gold}.
\end{liste}
\end{rk}

\textit{Proof of Proposition \ref{GreenbergCriterion}:}
We have an exact sequence
\begin{cdiagram}
   0 \arrow{r} & A_{S, \Sigma}^\infty \arrow{r} & H^2_{\Sigma, \Iw} (\bigO_{L, S}, \Z_p (1)) \arrow{r} & X^\infty_{S \setminus S_\infty} \arrow{r} & 0
\end{cdiagram}
and it follows that at every regular height one prime $\p$ of $\bLambda$ there is an equality
\begin{equation} \label{factorisation}
\Fitt^0_\bLambda ( H^2_{\Sigma, \Iw} (\bigO_{L, S}, \Z_p (1)))_\p = \Fitt^0_\bLambda (A_{S, \Sigma}^\infty)_\p \cdot \Fitt^0_\bLambda ( X^\infty_{S \setminus S_\infty} )_\p
\end{equation}
since $\bLambda_\p$ is a discrete valuation ring in this case. If $\p$ is a singular prime, in turn, then $H^2_{\Sigma, \Iw} (\bigO_{L, S}, \Z_p (1))_\p = \Fitt^0_\bLambda (A_{S, \Sigma}^\infty)_\p$ by Lemma \ref{IwasawaInvariants}\,(a) because $X^\infty_{S \setminus S_\infty}$ has vanishing $\mu$-invariant. \medskip \\
Let us now assume that $A_{S, \Sigma}^\infty$ is finite. The previous discussion combines with Lemma \ref{IwasawaInvariants}\,(b) to imply that $\Fitt^0_\bLambda ( X^\infty_{S \setminus S_\infty})_\p = \Fitt^0_\bLambda (H^2_{\Sigma, \Iw} (\bigO_{L, S}, \Z_p (1)))_\p$ for all height-one primes $\p$ of $\bLambda$. This implies that the surjection
\[
\faktor{\bLambda}{\Fitt^0_\bLambda (H^2_{\Sigma, \Iw} (\bigO_{L, S}, \Z_p (1)))} \to \faktor{\bLambda}{\Fitt^0_\bLambda ( X^\infty_{S \setminus S_\infty})}
\]
is a pseudo-isomorphism and hence has finite kernel. 
It follows that the induced map
\begin{align*}
\Ext^1_\bLambda \Big ( \faktor{\bLambda}{\Fitt^0_\bLambda ( X^\infty_{S \setminus S_\infty})},\bLambda \Big ) & \longrightarrow \Ext^1_\bLambda \Big ( \faktor{\bLambda}{\Fitt^0_\bLambda (H^2_{\Sigma, \Iw} (\bigO_{L, S}, \Z_p (1)))},\bLambda \Big ) \\
& \longrightarrow \Ext^1_\bLambda \Big ( \faktor{\bLambda}{\Fitt^0_\bLambda (H^2_{\Sigma, \Iw} (\bigO_{L, S}, \Z_p (1)))^{\ast \ast}},\bLambda \Big )
\end{align*}
is an isomorphism, where we have used that the second arrow is also induced by a pseudo-isomorphism (see the discussion in the proof of Theorem \ref{pairing-theorem}\,(c)). Taking $\Ext^1_\bLambda ( - , \bLambda)$ across the isomorphism in Theorem \ref{pairing-theorem}\,(a) then gives (cf.\@ (\ref{double-ext})) 
\[
\faktor{\NS^{r_T}}{\NS^b} \cong \Ext^1_\bLambda \Big ( \faktor{\bLambda}{\Fitt^0_\bLambda ( X^\infty_{S \setminus S_\infty})},\bLambda \Big ). 
\]
Conversely, assume to be given such an isomorphism. Taking $\Ext^1_\bLambda ( - , \bLambda)$ and combining with Theorem \ref{pairing-theorem}, we obtain a pseudo-isomorphism
\begin{align} \label{pseudo-isomorphism}
\faktor{\bLambda}{\Fitt^0_\bLambda (H^2_{\Sigma, \Iw} (\bigO_{L, S}, \Z_p (1)))} \approx \faktor{\bLambda}{\Fitt^0_\bLambda ( X^\infty_{S \setminus S_\infty})}. 
\end{align}
Now let $\p$ be a regular height one prime, then the above pseudo-isomorphism gives 
\[
\Fitt^0_\bLambda (H^2_{\Sigma, \Iw} (\bigO_{L, S}, \Z_p (1)))_\p \cong 
\Fitt^0_\bLambda ( X^\infty_{S \setminus S_\infty})_\p.
\]
Since $\bLambda_\p$ is a discrete valuation ring, this implies
\begin{align*}
    (\p \bLambda_\p)^{\text{length}_{\bLambda_\p} (X^\infty_{S \setminus S_\infty})_\p + \text{length}_{\bLambda_\p} (A_{S, \Sigma}^\infty)_\p } & = \Fitt^0_\bLambda (A^\infty_{S, \Sigma})_\p \cdot \Fitt^0_\bLambda ( X^\infty_{S \setminus S_\infty} )_\p \\
    & = \Fitt^0_\bLambda ( X^\infty_{S \setminus S_\infty} )_\p \\
    & = (\p \bLambda_\p)^{\text{length}_{\bLambda_\p} (X^\infty_{S \setminus S_\infty})_\p}.
\end{align*}
We deduce that $\text{length}_{\bLambda} (A^\infty_{S, \Sigma})_\p = 0$, hence $(A^\infty_{S, \Sigma})_\p = 0$. Thus, the finiteness of $A_{S, \Sigma}^\infty$ follows now from 
the assumed vanishing of its $\mu$-invariant and
Lemma \ref{IwasawaInvariants}. 
\qed

\subsubsection{Leading term conjectures}

We continue using the notations an assumptions of \S\,\ref{set-up-section}. In this section we describe a connection between Proposition \ref{GreenbergCriterion} and conjectures concerning the leading terms of equivariant $L$-functions that appear in the literature. The central player in these conjectures is the \textit{$S$-truncated and $\Sigma$-modified Dirichlet $L$-function} that is defined as
\[
L_{L_n | K, S, \Sigma} (\chi, s) = \prod_{v \in \Sigma} (1 - \chi ( \Frob_v) \text{N}v^{1 - s} ) \cdot \prod_{v \not \in S} (1 - \chi (\Frob_v) \text{N}v^{-s} )^{-1}
\]
for any complex values $s$ satisfying $\text{Re} (s) > 1$, and any character $\chi \in \widehat{\cG_n}$. It is well known that $L_{L_n | K, S, \Sigma} (\chi, s)$ can be continued to a meromorphic function that is defined on the whole complex plane and holomorphic at $s = 0$. For any $r \geq 0$, we denote the $r^{th}$-th coefficient in the Taylor expansion of $L_{L_n | K, S, \Sigma}  (\chi, s)$ at $s = 0$ by 
\[
L^{(r)}_{L_n | K, S, \Sigma} (\chi, 0) = \lim_{s \to 0} s^{-r} L_{L_n | K, S, \Sigma}  (\chi, s). 
\]
and define the \textit{Stickelberger element} to be
\[
\theta_{L_n | K, S, \Sigma}^{(r)} (0) = \sum_{\chi \in \widehat{\cG_n}} e_{\overline{\chi}} L_{L_n | K, S, \Sigma}^{(r)} (0).
\]
Fix an isomorphism $\C \cong \C_p$ and recall that the Dirichlet regulator defines a $\C_p [\cG_n]$-linear isomorphism
\[
\lambda_{L_n, S, \Sigma} \: \C_p \otimes_\Z \bigO_{L_n, S, \Sigma}^\times 
\stackrel{\simeq}{\longrightarrow}
\C_p \otimes_\Z X_{L_n, S}, 
\quad
a \mapsto - \sum_{w \in S_{L_n}} \log (| a|_w ) \cdot w.
\]
We remind the reader that the integer $r_T \geq 0$ for $T = \Z_p (1)$ is given by $r_T = | S_\infty (K) |$ under the running hypotheses. For every $v \in S$, we also fix a place $w \in S_{L_n}$ such that $w | v$. 
\begin{definition}
Pick $w_0 \in S \setminus S_\infty$.
The $r_T$-th \textit{Rubin-Stark element} $\varepsilon_{L_n | K, S, \Sigma}$ is the preimage of $\theta^{(r_T)}_{L_n | K, S, \Sigma} (0) \cdot \bigwedge_{w \in S_\infty} (w - w_0)$
under the isomorphism
\[
\C_p \otimes_{\Z_p} \exprod^{r_T}_{\Z_p [\cG_n]} U_{L_n, S, \Sigma} \stackrel{\simeq}{\longrightarrow} \C_p \otimes_{\Z_p} \exprod^{r_T}_{\Z_p [\cG_n]} X_{L_n, S}
\]
induced by the Dirichlet regulator $\lambda_{L, S, \Sigma}$.
\end{definition}

We shall investigate the following conjectures.

\begin{conjecture} \label{conjectures}
    \begin{liste}
    \item We have an inclusion
    \[
    \varepsilon_{L_n | K, S, \Sigma} \in \bidual^{r_T}_{\Z_p [\cG_n]} U_{L_n, S, \Sigma}.
    \]
    \item There is a basis $\mathfrak{z}_{L_n} \in \Det_{\Z_p [\cG_n]} ( C^\bullet_n)$ such that $\Theta_{L_n} (\mathfrak{z}_{L_n}) = \varepsilon_{L_n | K, S, \Sigma}$. 
    \item There is a basis $\mathfrak{z}_{\infty} \in \Det_\bLambda (C^\bullet_\infty)$ such that $\Theta (\mathfrak{z}_\infty) = (\varepsilon_{L_n | K, S, \Sigma})_n$. 
    \end{liste}
\end{conjecture}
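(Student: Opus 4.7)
The plan is to establish the chain of implications (c) $\Rightarrow$ (b) $\Rightarrow$ (a), which reduces all three statements to the Iwasawa-theoretic Conjecture (c), and then to prove (c) in the cases where the equivariant Iwasawa Main Conjecture of Burns--Greither is available. The guiding principle is that the projection map $\Theta$ of \S4.1 is by design a lift of the finite-level projection $\Theta_{L_n}$ to the tower $L_\infty | L$, so each half of the implication amounts to a descent compatibility that is already essentially recorded in the paper.

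For the implication (c) $\Rightarrow$ (b), I would take $\mathfrak{z}_{L_n}$ to be the image of $\mathfrak{z}_\infty$ under the natural base-change map $\Det_{\bLambda}(C^\bullet_\infty) \to \Det_{\cR[\cG_n]}(C^\bullet_n)$ induced by $C^\bullet_\infty \otimes_{\bLambda}^{\mathbb{L}} \cR[\cG_n] \simeq C^\bullet_n$. That this is a $\cR[\cG_n]$-basis follows from the functoriality of determinant modules under perfect complexes, and the commutative square appearing at the end of the proof of Lemma \ref{FiniteLem} then gives $\Theta_{L_n}(\mathfrak{z}_{L_n})$ as the image of $(\varepsilon_{L_m | K, S, \Sigma})_m$ under the codescent map $\NS^{r_T} \to \bidual^{r_T}_{\cR[\cG_n]} U_{L_n, S, \Sigma}$, which is $\varepsilon_{L_n | K, S, \Sigma}$ by the norm-coherence of Rubin--Stark elements. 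The implication (b) $\Rightarrow$ (a) is then immediate from the first assertion of Lemma \ref{FiniteLem}, which forces $\Theta_{L_n}(\mathfrak{z}_{L_n})$ to land inside the idempotent-localised exterior bidual $\bigl(\bidual^{r_T}_{\cR[\cG_n]} U_{L_n, S, \Sigma}\bigr)[1 - e_{L_n,T}]$, which sits inside $\bidual^{r_T}_{\cR[\cG_n]} U_{L_n, S, \Sigma}$.

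To actually produce $\mathfrak{z}_\infty$ in (c) one would invoke Proposition \ref{eimc-proposition}: under the eIMC for the data $(L_\infty | K, S, \Sigma, p)$ together with the $p$-part of Rubin--Stark at every finite level, the family $(\varepsilon_{L_n | K, S, \Sigma})_n$ generates $\NS^b(\ZZ_p(1), L_\infty)$ as a $\bLambda$-module. Since $\NS^b$ is by definition the image of $\Theta$, and $\Theta$ is the composition of an inclusion with two isomorphisms followed by rank reduction, the generator of $\NS^b$ must be $\Theta$ of a $\bLambda$-generator of $\Det_{\bLambda}(C^\bullet_\infty)$; taking this generator as $\mathfrak{z}_\infty$ yields (c). In the cyclotomic setting $K = \Q$ with $L$ abelian the eIMC is known by \cite{BurnsGreither} and Rubin--Stark reduces to the classical statement about cyclotomic units, so Conjecture (c), and hence (a) and (b), are unconditional in that setting.

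The main obstacle, of course, is that in full generality Conjecture (c) is at least as deep as the equivariant Tamagawa Number Conjecture for $h^0(\Spec L_n)(0)$ uniformly in $n$ and is not expected to be provable by present techniques. The only way forward in new cases is therefore to input an unconditional eIMC together with Rubin--Stark; all the descent machinery needed to transport the Iwasawa-theoretic statement to the finite levels and then to the algebraic Rubin--Stark statement is already set up by the combination of Lemma \ref{FiniteLem}, Proposition \ref{basic-theorem} and Proposition \ref{eimc-proposition}.
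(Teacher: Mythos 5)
The statement you are tackling is labelled in the paper as a \emph{conjecture}, and the paper does not attempt to prove it. Its Remark \ref{conjectures-remark} merely identifies parts (a), (b) and (c) as special cases of, respectively, the Rubin--Stark conjecture, the equivariant Tamagawa Number Conjecture and the higher-rank equivariant Iwasawa Main Conjecture appearing in \cite{BKS2}. There is therefore no ``paper's own proof'' to compare yours against; what you have written is a reduction argument, which you yourself correctly characterise at the end as not a proof in the general case.

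That said, your reduction (c) $\Rightarrow$ (b) $\Rightarrow$ (a) is sound, and it is essentially the mechanism driving the proof of Theorem \ref{etnc-thm} in the paper. The step (c) $\Rightarrow$ (b) hinges, as you indicate, on the compatibility of $\Theta$ and $\Theta_{L_n}$ under codescent; this is exactly the commutative square at the end of the proof of Lemma \ref{FiniteLem}, and the image of a $\bLambda$-basis of $\Det_{\bLambda}(C^\bullet_\infty)$ under base change is an $\cR[\cG_n]$-basis of $\Det_{\cR[\cG_n]}(C^\bullet_n)$ because $C^\bullet_\infty \otimes^{\mathbb{L}}_{\bLambda} \cR[\cG_n] \cong C^\bullet_n$. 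The step (b) $\Rightarrow$ (a) is immediate from the containment assertion of Lemma \ref{FiniteLem}. Finally, your observation that Proposition \ref{eimc-proposition} combined with the Burns--Greither eIMC and the classical theory of cyclotomic units makes all three parts unconditional for abelian extensions of $\Q$ is consistent with the paper's own use of those inputs in the proof of Theorem \ref{IMC-refinement}.
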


\begin{rk} \label{conjectures-remark}
The above are special cases of conjectures appearing in the literature. Conjecture \ref{conjectures}\,(a) is the relevant case of the Rubin-Stark conjecture \cite[Conj.\@ B']{Rubin96} in this setting. Conjecture \ref{conjectures}\,(b) is a consequence of the equivariant Tamagawa number conjecture as stated, for example, in \cite[Conj.\@ 2.3]{BKS2} after taking \cite[Prop.\@ 2.5]{BKS2} into consideration. It is easy to see that if $L | K$ has a unique place above $p$ and $S = S_\infty (K) \cup S_p$, then Conjecture \ref{conjectures}\,(b) in fact coincides with \cite[Conj.\@ 2.3]{BKS2}. Finally, Conjecture \ref{conjectures}\,(c) is the higher rank equivariant Iwasawa main conjecture appearing in \cite[Conj.\@ 3.1]{BKS2}. 
\end{rk}

The following is an analogue of the classical index formula for cyclotomic units (see \cite[Thm.\@ 8.2]{Washington}).

\begin{lem} \label{IndexFormula}
Assume that all infinite places split in $L | K$
and that there is a unique place $\p$ above $p$ in $L$. Put $S = S_\infty (K) \cup S_p$
and assume that $A_{S, \Sigma} (L)$ is $\cG$-cohomologically trivial. Then
\[
\big (\bidual^{r_T}_{\Z_p [\cG]} U_{L, S, \Sigma} \; : \; \Z_p [\cG] \cdot \varepsilon_{L | K, S, \Sigma} \big ) = h_{L, S, \Sigma}, 
\]
where $h_{L, S, \Sigma} = | A_{S, \Sigma} ( L) |$ is the $p$-part of the $(S, \Sigma)$-class number of $L$. 
\end{lem}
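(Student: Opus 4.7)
The plan is to translate the statement into a Fitting-ideal computation on $A_{S,\Sigma}(L)$ via the framework of basic universal norms developed in the previous subsection. First I would verify that under the given hypotheses $U_{L,S,\Sigma} = H^1_\Sigma(\bigO_{L,S}, \Z_p(1))$ is a projective $\Z_p[\cG]$-module of constant rank $r_T$: the unique-$p$-adic-place assumption forces $X_{L, S\setminus S_\infty}$ to vanish (being the augmentation kernel of a rank-one free $\Z_p$-module), so via the exact sequence of Example \ref{RepExamples}(a) one has $H^2_\Sigma(\bigO_{L,S}, \Z_p(1)) = A_{S,\Sigma}(L)$; the assumed cohomological triviality then propagates through the perfect complex $C^\bullet_L$ to $U_{L,S,\Sigma}$, which, being $\Z_p$-torsion free, must therefore be $\Z_p[\cG]$-projective. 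The rank count $\rank_{\Z_p} U_{L,S,\Sigma} = |S_L| - 1 = [L:K] \cdot r_T$ (using that all infinite places split and $|(S_p)_L|=1$) then pins the $\Z_p[\cG]$-rank at $r_T$, and consequently $\bidual^{r_T}_{\Z_p[\cG]} U_{L,S,\Sigma}$ coincides with the invertible $\Z_p[\cG]$-module $\exprod^{r_T}_{\Z_p[\cG]} U_{L,S,\Sigma}$.

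Next, I would exploit the finite-level analogues of the identifications underlying Theorem \ref{pairing-theorem}. Applying Lemma \ref{LittleLemma}(d) to a standard quadratic representative of $C^\bullet_L$, together with Lemma \ref{FiniteLem} (which identifies the image of $\Theta_L$ with the module $\UN^b_0$ of basic universal norms), shows that after fixing a $\Z_p[\cG]$-basis of $\exprod^{r_T}_{\Z_p[\cG]} U_{L,S,\Sigma}$ one has an identification of $\UN^b_0$ with the reflexive hull of $\Fitt^0_{\Z_p[\cG]}(A_{S,\Sigma}(L))$ inside $\Z_p[\cG]$. Since $A_{S,\Sigma}(L)$ has projective dimension one (by cohomological triviality), this Fitting ideal is already principal, reflexive, and of index $|A_{S,\Sigma}(L)| = h_{L,S,\Sigma}$ inside $\Z_p[\cG]$, yielding
\[
\big[\exprod^{r_T}_{\Z_p[\cG]} U_{L,S,\Sigma} : \UN^b_0\big] = h_{L,S,\Sigma}.
\]

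To conclude I would verify the equality $\UN^b_0 = \Z_p[\cG] \cdot \varepsilon_{L|K,S,\Sigma}$ under the present hypotheses. This is the finite-level instance of Conjecture \ref{conjectures}(b); the cohomological triviality of $A_{S,\Sigma}(L)$ trivialises $\Det_{\Z_p[\cG]}(C^\bullet_L)$ as a free $\Z_p[\cG]$-module of rank one, and the identification then reduces to a direct comparison of the Dirichlet regulator $\lambda_{L,S,\Sigma}$ (entering the definition of $\varepsilon_{L|K,S,\Sigma}$) with the passage-to-cohomology trivialisation featuring in the construction of $\Theta_L$. This last step constitutes the \textbf{main obstacle}, being a rank-$r_T$ equivariant Tamagawa number statement; whereas such an identification is in general highly non-trivial, the hypotheses of the lemma are tailored to make it tractable, with cohomological triviality trivialising the determinant line, the unique $p$-adic place simplifying the class-group description, and complete splitting of the infinite places ensuring that $\theta^{(r_T)}_{L|K,S,\Sigma}(0)$ is of the correct shape to match the rank reduction performed by $\Theta_L$.
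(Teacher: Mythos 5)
Your plan goes wrong at the final step, and the error is serious: verifying the equality $\UN^b_0 = \Z_p[\cG]\cdot\varepsilon_{L|K,S,\Sigma}$ is \emph{not} a tractable auxiliary check under the stated hypotheses --- it is precisely Conjecture \ref{conjectures}\,(b), i.e.\@ the equivariant Tamagawa Number Conjecture for the pair $(L|K, \Z_p(1))$, which is open in the generality of the Lemma. Worse, the logical flow in the paper runs in exactly the opposite direction: Lemma \ref{IndexFormula} is one of the key \emph{inputs} in the proof of Theorem \ref{etnc-thm}, where it is used (via the chain $\varepsilon\in\UN^{r_T}_0$, finiteness of the index, index count $h_{L,S}$, hence $\UN^{r_T}_0=\Z_p[\cG]\cdot\varepsilon$) to deduce Conjecture \ref{conjectures}\,(b) from Rubin--Stark and Greenberg. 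Any attempt to prove the Lemma by first establishing $\UN^b_0 = \Z_p[\cG]\cdot\varepsilon$ is therefore circular. Your closing remark that the hypotheses ``are tailored to make it tractable'' is not substantiated, and the comparison of the $\Theta_L$-trivialisation of the determinant line with the Dirichlet-regulator trivialisation is the full content of the ETNC in this case --- cohomological triviality of $A_{S,\Sigma}(L)$ does not dissolve it.

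The paper's actual proof avoids universal norms entirely. It establishes that $U_{L,S,\Sigma}$ is $\Z_p[\cG]$-\emph{free} (you only show projectivity; freeness follows from the rational isomorphism $\Q_p U_{L,S,\Sigma}\cong\Q_p X_{L,S}$ given by the Dirichlet regulator together with \cite[Thm.~5.6.10(ii)]{NSW} and the freeness of $X_{L,S}$, which in turn follows because $\cG$ fixes $\p$), then directly compares the two lattices $\Z_p[\cG]\cdot\lambda_{L,S,\Sigma}(\varepsilon_{L|K,S,\Sigma}) = \Z_p[\cG]\cdot\theta^{(r_T)}_{L,S,\Sigma}(0)$ and $\Z_p[\cG]\cdot\lambda_{L,S,\Sigma}(u_1\wedge\dots\wedge u_{r_T})$ inside $\C_p[\cG]$ using Sinnott's index lemma. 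The key ingredient is the analytic class number formula (an unconditional theorem), which computes the determinant of multiplication by $\theta^{(r_T)}_{L,S,\Sigma}(0)^{-1}$ to be $(h_{L,S,\Sigma}\cdot R_{L,S,\Sigma})^{-1}$; the determinant of multiplication by $\lambda_{L,S,\Sigma}(u_1\wedge\dots\wedge u_{r_T})$ contributes $R_{L,S,\Sigma}$, the regulators cancel, and the index is $h_{L,S,\Sigma}$. Your Fitting-ideal computation in step two (identifying $\UN^b_0$ with the reflexive hull of $\Fitt^0_{\Z_p[\cG]}(A_{S,\Sigma}(L))$ and deducing that its index in $\exprod^{r_T}U_{L,S,\Sigma}$ is $h_{L,S,\Sigma}$) is itself sound, but it establishes a statement about $\UN^b_0$, not about $\Z_p[\cG]\cdot\varepsilon$, and it is exactly the gap between these two sub-modules that the Lemma is needed for.
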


\begin{proofbox}
Since $\cG$ acts trivially on $\p$, the module $X_{L, S}$ is $\Z_p [\cG]$-free, generated by the $\Z_p [\cG]$-linearly independent set $\{ (w - \p) \}$, where $w$ ranges over our set of fixed places in $S_L$. It follows that $H^2 (C^\bullet_0)$ is $\cG$-cohomologically trivial and the isomorphism
\[
\widehat{H}^i (\cG, \ U_{L, S, \Sigma})\cong \widehat{H}^{i + 2} ( \cG, \ H^2 (C^\bullet_0))
\]
for all $i \in \Z$ induced by the complex $C^\bullet_0$ shows that $U_{L,S, \Sigma }$ is $\cG$-cohomologically trivial, hence $\Z_p [\cG]$-projective. The Dirichlet regulator map $\lambda_{L, S, \Sigma}$ induces a rational isomorphism $\Q_p U_{L, S, \Sigma} \cong \Q_p X_{L, S}$ (see \cite[\S\,I.4.3]{Tate}), which in this setting (see \cite[Thm.\@ 5.6.10\,(ii)]{NSW}) implies that there is an isomorphism $U_{L, S, \Sigma} \cong X_{L, S}$ and so $U_{L,S,\Sigma}$ is $\ZZ_p[\cG]$-free. Let $\{u_1, \dots, u_{r_T}\}$ be any $\ZZ_p[\cG]$-basis of this module. Then 
\[
\bidual^{r_T}_{\Z_p [\cG]} U_{L, S, \Sigma} = \exprod^{r_T}_{\Z_p [\cG]} U_{L, S, \Sigma} = \Z_p [\cG] \cdot (u_1 \wedge \dots \wedge u_{r_T}).
\]
It now suffices to calculate the index  
\[
\Z_p [\cG] \cdot \lambda_{L, S, \Sigma} ( \varepsilon_{L | K, S, \Sigma}) = \Z_p [\cG] \cdot \theta^{(r_T)}_{L, S, \Sigma} (0) 
\quad\text{ inside } \quad
\Z_p [\cG] \cdot \lambda_{L, S, \Sigma} ( u_1 \wedge \dots \wedge u_{r_T}) 
\]
as sublattices of $\C_p \exprod_{\Z_p [\cG]}^r X_{L, S} \cong \C_p [\cG]$. If we can find a $\C_p$-linear isomorphism $f \: \C_p [\cG] \to \C_p [\cG]$ that maps the first of these two aforementioned lattices bijectively onto the latter, then by \cite[Lem.\@ 1.1\,(b)]{Sinnott} this index is given by the $\C_p$-determinant of $(\det f)^{-1}$. \medskip \\
Define $f$ to be the $\C_p [\cG]$-linear extension of $1 \mapsto \theta^{(r_T)}_{L, S, \Sigma} (0)^{-1} \cdot  \lambda_{L, S, \Sigma} ( u_1 \wedge \dots \wedge u_{r_T})$, then this map has the desired properties. Calculating the determinant of multiplication by $\theta^{(r_T)}_{L, S, \Sigma} (0)^{-1}$ with respect to the basis $\{ e_\chi \}_{\chi\in \widehat{\cG}}$, we find that it equals
\[ \prod_{\chi \in \widehat{G}} L^{(r_T)}_{L | K, S, \Sigma} (\overline{\chi}, 0)^{-1} = \zeta_{L, S, T}^{(r_T)} (0)^{-1} = (h_{L, S, \Sigma}\cdot R_{L, S, \Sigma})^{-1}
\]
by the analytic class number formula, where $R_{L,S,\Sigma}$ is the $(S,\Sigma)$-regulator. Finally, using \cite[Ch.\@ III, \S\,9.4, Prop.\@ 6]{Bourbaki}, we conclude that multiplication by $\lambda_{L, S, \Sigma} ( u_1 \wedge \dots \wedge u_{r_T})$ has determinant $R_{L, S, \Sigma}$. This completes the proof of the Lemma.
\end{proofbox}

\begin{thm} \label{etnc-thm}
    Let $L$ be a totally real field. Assume $|S_p (L)| = 1$ and that $p \nmid |\cG|$. If 
    \begin{enumerate}[label=(\roman*)]
        \item the Rubin-Stark conjecture \ref{conjectures}\,(a) holds for the data $(L_n | K, S, \varnothing)$ for all $n \in \N_0$,
        \item Greenberg's conjecture holds for $L$,
    \end{enumerate}
    then the equivariant Iwasawa Main Conjecture \cite[Conj.\@ 3.1]{BKS2} for $L_\infty | K$ and the equivariant Tamagawa Number Conjecture \cite[Conj.\@ 2.3]{BKS2} for $L | K$ both hold true for $S = S_\infty \cup S_p$.
\end{thm}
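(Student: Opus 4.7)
The strategy is to establish the eIMC (Conjecture~\ref{conjectures}\,(c)) by showing that the Rubin-Stark system $(\varepsilon_n)_n$, with $\varepsilon_n := \varepsilon_{L_n \mid K, S, \varnothing}$, is a $\bLambda$-basis of $\NS^b$. This reformulation is furnished by the converse direction of Proposition~\ref{eimc-proposition}. The eTNC (Conjecture~\ref{conjectures}\,(b)) will then follow by codescent via Lemma~\ref{FiniteLem}. As a preliminary, since $L$ is totally real and $p$ is odd we may take $\Sigma = \varnothing$; then $T = \Z_p(1)$ satisfies Hypothesis~\ref{main-hypothesis} by Example~\ref{RepExamples}\,(a). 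Hypothesis (i) provides $\varepsilon_n \in \bidual^{r_T}_{\Z_p[\cG_n]} U_{L_n, S, \varnothing}$ for every $n \geq 0$, and the standard norm compatibility of Rubin-Stark elements (see \cite[Prop.\@ 6.1]{Rubin96}) assembles these into an element $(\varepsilon_n)_n \in \NS^{r_T}$.

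For the input from Greenberg's conjecture, the assumption $|S_p(L)| = 1$ together with the running assumptions of \S\,\ref{set-up-section} forces the unique $p$-adic prime of $L$ to be the unique ramifying prime of $L_\infty \mid L$. By Remark~\ref{greenberg-rk}\,(a), hypothesis (ii) then implies that $A_S^\infty$ is finite; in particular its $\mu$-invariant vanishes. Proposition~\ref{GreenbergCriterion} therefore yields a canonical isomorphism
\[
\faktor{\NS^{r_T}}{\NS^b} \cong \Ext^1_\bLambda\bigl(\bLambda \big/ \Fitt^0_\bLambda(X^\infty_{S \setminus S_\infty}),\, \bLambda\bigr).
\]
Since $p \nmid |\cG|$, both $\NS^{r_T}$ and $\NS^b$ are $\bLambda$-free of rank one by Proposition~\ref{basic-theorem} and Remark~\ref{structure-theorem-remark}\,(b), so each of the two inclusions $\NS^b \subseteq \NS^{r_T}$ and $\langle (\varepsilon_n)_n\rangle_\bLambda \subseteq \NS^{r_T}$ is encoded by a single invariant of $\bLambda$.

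To show that these two invariants agree up to a unit, the plan is to work character-by-character, using the decomposition $\bLambda = \prod_\chi \Lambda_\chi$ indexed by the $\Q_p$-rational characters of $\cG$. For each $\chi$, the displayed isomorphism describes the $\chi$-component of $\NS^b$ in terms of $\Fitt^0_{\Lambda_\chi}(X^{\infty,\chi}_{S \setminus S_\infty})$, whose structure is made fully accessible by the assumption $|S_p(L)| = 1$. On the other hand, the $\chi$-component of the sublattice $\langle (\varepsilon_n)_n\rangle_\bLambda$ can be pinned down by combining the finite-level index formula of Lemma~\ref{IndexFormula}, which gives $\bigl(\bidual^{r_T}_{\Z_p[\cG]} U_{L,S,\varnothing} : \Z_p[\cG] \cdot \varepsilon_{L \mid K, S, \varnothing}\bigr) = h_{L, S, \varnothing}$, with the $\chi$-component of the classical Iwasawa Main Conjecture (valid by the Mazur-Wiles-Wiles theorem given that $L$ is totally real and $p \nmid |\cG|$) in order to transport the analytic data up the tower. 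Matching the two computations in each $\Lambda_\chi$ then yields $\langle (\varepsilon_n)_n\rangle_\bLambda = \NS^b$, which is exactly the eIMC.

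Having established the eIMC, let $\mathfrak{z}_\infty \in \Det_\bLambda(C^\bullet_\infty)$ denote the corresponding $\bLambda$-basis. The commutative square in Lemma~\ref{FiniteLem} implies that its codescent is a $\Z_p[\cG]$-basis of $\Det_{\Z_p[\cG]}(C^\bullet_0)$ whose image under $\Theta_L$ is the codescent of $(\varepsilon_n)_n$, which by construction is $\varepsilon_{L \mid K, S, \varnothing}$. This establishes Conjecture~\ref{conjectures}\,(b), which in our situation ($S = S_\infty \cup S_p$ with $|S_p(L)| = 1$) coincides with the eTNC by Remark~\ref{conjectures-remark}. The principal obstacle is the character-by-character index matching in the third paragraph: the Fitting ideal input from Greenberg's conjecture must be precisely aligned, within each $\Lambda_\chi$, with the analytic input from Lemma~\ref{IndexFormula} transported to the Iwasawa level via the classical IMC. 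The hypothesis $p \nmid |\cG|$ is crucial here, as it reduces the equivariant problem to an independent analysis on each $\Lambda_\chi$, which is a two-dimensional regular local ring where ideal equalities can be detected at height-one primes.
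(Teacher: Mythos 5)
Your proposal has the right ingredients (Lemma~\ref{IndexFormula}, Proposition~\ref{GreenbergCriterion}, the $p\nmid|\cG|$ reduction, the Rubin--Stark norm-coherence) but it assembles them into a plan whose hardest step is left unresolved, and in doing so it misses the single observation that makes the paper's argument close.

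First, the hypothesis $|S_p(L)|=1$ (together with $S=S_\infty\cup S_p$ and the running assumption that no finite place splits in $L_\infty|L$) forces $X^\infty_{S\setminus S_\infty}=0$, since at every level $L_n$ there is exactly one $p$-adic place. Consequently $\Fitt^0_\bLambda(X^\infty_{S\setminus S_\infty})=\bLambda$ and Proposition~\ref{GreenbergCriterion} already gives $\NS^{r_T}/\NS^b\cong\Ext^1_\bLambda(\bLambda/\bLambda,\bLambda)=0$, i.e.\ $\NS^{r_T}=\NS^b$ outright. You record the isomorphism from Proposition~\ref{GreenbergCriterion} but treat the right-hand side as a genuine unknown to be analysed character-by-character, which is why your third paragraph stalls.

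Second, the part of the argument you flag as ``the principal obstacle'' --- producing a precise description of $\langle(\varepsilon_n)_n\rangle_\bLambda$ inside $\NS^{r_T}$ by ``transporting the index formula up the tower via the classical IMC'' --- is both unnecessary and not obviously sound. The paper never invokes the classical or Wiles Iwasawa Main Conjecture. Instead it argues entirely at finite level: Proposition~\ref{Dodgy-Proposition}\,(b) together with \cite[Prop.~7]{CornacchiaGreither} computes the index $(\bidual^{r_T}_{\Z_p[\cG]}U_{L,S}:\UN^{r_T}_0)=|(A_S^\infty)^\Gamma|$; Greenberg's conjecture (finiteness of $A_S^\infty$) and the vanishing of the Herbrand quotient give $|(A_S^\infty)^\Gamma|=|(A_S^\infty)_\Gamma|=|A_S(L)|=h_{L,S}$; comparison with Lemma~\ref{IndexFormula} then yields $\UN^{r_T}_0=\Z_p[\cG]\cdot\varepsilon_{L|K,S}$, and Nakayama's Lemma upgrades this to $\NS^{r_T}=\langle(\varepsilon_n)_n\rangle_\bLambda$. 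Combined with $\NS^{r_T}=\NS^b$ this is the eIMC, and then Lemma~\ref{FiniteLem} gives the eTNC via $\im\Theta_L=\UN^b_0=\UN^{r_T}_0=\Z_p[\cG]\cdot\varepsilon_{L|K,S}$, exactly as you say in your last paragraph. If you wanted to push your route through, you would need to explain how Wiles' theorem (stated in terms of $p$-adic $L$-functions) pins down the Rubin--Stark system, which is a genuinely separate analytic input the paper avoids; in fact the theorem being proved is itself a statement \emph{about} the eIMC, so leaning on the full strength of the non-equivariant IMC risks proving less than is being claimed while assuming more than is given.
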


\begin{proofbox}
By \cite[Prop.\@ 6.1]{Rubin96} we have $\varepsilon_{L | K, S} \in \UN_0^r$, so Lemma \ref{IndexFormula} in particular implies that 
the index of $\UN^{r_T}_0$ inside $\bidual^{r_T}_{\Z_p [\cG]} U_{L, S}$ is finite. By Proposition \ref{Dodgy-Proposition}\,(b) and \cite[Prop.\@ 7]{CornacchiaGreither}, this index is given by
\begin{align*}
    \left( \bidual^{r_T}_{\Z_p [\cG]} U_{L, S} \ : \ \UN^{r_T}_0 \right) &
    = ( \Z_p [\cG] : \Fitt^0_{\Z_p [\cG]} ((A_{S}^\infty)^{\Gamma, \vee} )) = | (A_{S}^\infty)^{\Gamma, \vee} | = | (A_{S}^\infty)^{\Gamma} |,
\end{align*}
where we have used that $A_{S}^\infty$ is finite by assumption (ii). This finiteness also implies that the Herbrand quotient of $A_{S}^\infty$ is trivial, hence
\[
| (A_{S}^\infty)^\Gamma | = | (A_{S}^\infty)_\Gamma | = | A_{S} (L) | = h_{L, S},
\]
where the second equality follows from applying \cite[Prop.\@ 13.22]{Washington}. Now, Lemma \ref{IndexFormula} implies that
\[
\UN^{r_T}_0 = \Z_p [\cG] \cdot \varepsilon_{L | K, S}. 
\]
It follows from Nakayama's Lemma that the sequence $(\varepsilon_{L_n | K, S})_{n \geq 0}$ is a $\bLambda$-basis of $\NS^{r_T} ( L_\infty, \Z_p (1))$. 
Since we are assuming (ii), Proposition \ref{GreenbergCriterion} gives the equality $\NS^{r_T} = \NS^b$, hence the equivariant Iwasawa Main Conjecture \ref{conjectures}\,(c) holds true. Moreover, we know from Lemma \ref{FiniteLem} that $\im \Theta_L = \UN^b_0$, so we also find that $\UN^{r_T}_0 = \im \Theta_L$. Thus, Conjecture \ref{conjectures}\,(b) is valid. Since we have already observed in Remark \ref{conjectures-remark} that in this setting Conjecture \ref{conjectures}\,(b) coincides with \cite[Conj.\@ 2.3]{BKS2}, this concludes the proof. 
\end{proofbox}

\subsection{$\mu$-invariant conjectures and the Tate module of elliptic curves}\label{mu-section}

In this section we use the result of Theorem \ref{pairing-theorem} to give a reformulation of the various $\mu$-vanishing conjectures. To do this, we assume that $L_\infty | L$ is the cyclotomic $\ZZ_p$-extension and consider the following Hypothesis:
\begin{hypothesis}\label{sigma-hypothesis}
    $\Sigma$ is a finite (possibly empty) set of places of $K$, disjoint from $S$, such that for every $v \in \Sigma$ the module of invariants $H^0(K_v, T)$ vanishes.
\end{hypothesis}
We caution the reader that we are not yet assuming that $\Sigma$ is chosen in such a way that Hypothesis \ref{main-hypothesis}\,(3) and (4) are satisfied.\medskip \\
It is then natural to formulate the following conjecture.

\begin{conjecture}\label{mu-vanishing-conjecture}
    Assume that $\Sigma$ satisfies Hypothesis \ref{sigma-hypothesis}. Then $H^2_{\Sigma, \Iw}(\cO_{L,S},T)$ is a torsion $\Lambda$-module and, furthermore, has vanishing $\mu$-invariant as a $\Lambda$-module.
\end{conjecture}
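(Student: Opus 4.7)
Since the statement is a conjecture rather than a theorem, there is no realistic hope of a full proof in the stated generality. Instead, the natural strategy is two-fold: (i) reduce the conjecture, via the machinery developed earlier in the paper, to an equivalent but more tractable statement about higher-rank norm-coherent sequences; and (ii) verify it in the cases where classical inputs are available. For the reformulation, the key tool is Theorem \ref{pairing-theorem}\,(c), which provides an injective pseudo-isomorphism
\[
\faktor{\NS^{r_T}}{\NS^b} \approx \left( \faktor{\bLambda}{\Fitt^0_{\bLambda}(H^2_{\Sigma,\Iw}(\cO_{L,S}, T))} \right)^\circ.
\]
Since the $\mu$-invariant is preserved under pseudo-isomorphism and, for a torsion $\Lambda$-module, coincides with the condition of being finitely generated as a $\cR$-module, the vanishing of $\mu$ for $H^2_{\Sigma,\Iw}(\cO_{L,S},T)$ is equivalent to the same vanishing for $\bLambda/\Fitt^0_\bLambda(H^2_{\Sigma,\Iw}(\cO_{L,S},T))$, which in turn is equivalent to the finite generation of $\NS^{r_T}/\NS^b$ over $\cR$. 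This is the equivalence alluded to as Proposition \ref{mu-vanishing-result} in the introduction, and is the natural content to prove at this point.

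To carry this out precisely, I would first verify that Hypothesis \ref{sigma-hypothesis} alone is sufficient to deduce the freeness of $H^1_\Sigma(\cO_{L_n,S},T)$ needed for Theorem \ref{pairing-theorem}, perhaps by enlarging $\Sigma$ if necessary; the invariance of the $\mu$-invariant in question under such enlargements should then be checked via a lemma in the spirit of Lemma \ref{mu-vanishing-independent-lemma} (referenced in Remark \ref{HypothesesRemark}) to ensure the conjecture is indeed independent of the precise choice of $\Sigma$. Once the reformulation is in hand, the torsion part of the statement follows from the weak Leopoldt conjecture (Hypothesis \ref{main-hypothesis}\,(4)), which is itself known in the classical cases of interest recorded in Examples \ref{RepExamples}.

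For the vanishing of the $\mu$-invariant, the conjecture is known in a handful of important cases that can be imported essentially for free. For $T = \Z_p(1)$ and $L$ abelian over $\Q$, the classical Ferrero–Washington theorem together with the exact sequence
\[
0 \to \varprojlim_n A_{S,\Sigma}(L_n) \to H^2_{\Sigma,\Iw}(\cO_{L,S},\Z_p(1)) \to \varprojlim_n X_{L_n, S\setminus S_\infty(K)} \to 0
\]
(recorded in Example \ref{RepExamples}\,(a)) reduces the problem to the $\mu$-invariant vanishing of the class groups and of the standard module $X$, both of which are well known. For Tate modules of elliptic curves with good ordinary reduction, the appropriate case of the conjecture can be extracted from the work of Kato and of Greenberg via the classical comparison with Selmer groups.

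The main obstacle is of course that no general method is known: for modular forms of weight $\geq 2$, or for elliptic curves with supersingular reduction, the conjecture genuinely lies beyond current technology and is part of a web of deep outstanding problems. Accordingly, I would expect the paper at this point not to attempt a direct proof but instead to (a) record the equivalence with $\NS^{r_T}/\NS^b$ being finitely generated over $\cR$, (b) deduce unconditional examples of this finite generation from known cases of the conjecture (as advertised in Corollary \ref{mu-vanishing-example}), and (c) point out the link to Greenberg's conjecture already established in Proposition \ref{GreenbergCriterion}.
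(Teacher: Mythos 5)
You correctly recognise that the statement is a conjecture and therefore has no proof in the paper; the paper's treatment does indeed consist of (i) showing independence of $\Sigma$ (Lemma \ref{mu-vanishing-independent-lemma}), (ii) recording known or equivalent cases (Examples \ref{mu-vanishing-examples}), and (iii) reformulating the conjecture via the pairing of Theorem \ref{pairing-theorem} as the finite $\cR$-generation of $\NS^{r_T}/\NS^b$ (Proposition \ref{mu-vanishing-result}), with the downstream application in Corollary \ref{mu-vanishing-example}. Your outline captures this structure accurately.

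A few small points where your account drifts from the paper. First, the paper does not attempt to make the reformulation work with Hypothesis \ref{sigma-hypothesis} alone: Proposition \ref{mu-vanishing-result} explicitly re-imposes all of Hypothesis \ref{main-hypothesis} (and hence $\cR$-torsion-freeness of $H^1_\Sigma$), so there is no need to verify or enlarge $\Sigma$ at that point. Second, you say the torsion part of the conjecture ``follows from'' Hypothesis \ref{main-hypothesis}\,(4); but in the section containing the conjecture the paper emphasises that it is \emph{not} assuming \ref{main-hypothesis}\,(3) or (4), and instead uses Lemma \ref{mu-vanishing-independent-lemma} to observe that the torsion assertion is \emph{equivalent} to weak Leopoldt for $(T^*(1), L_\infty)$ rather than being entailed by a prior hypothesis. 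Third, for elliptic curves the paper's route to known cases is via the Coates--Sujatha conjecture and their Corollary 3.6 (applicable when $E(L)[p^\infty]\neq 0$), together with Lim's formulation in general; it does not go through a comparison with Selmer groups via Kato or Greenberg in the ordinary case as you suggest, and no ordinarity restriction is imposed.
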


\begin{lemma}\label{mu-vanishing-independent-lemma}
    Conjecture \ref{mu-vanishing-conjecture} is independent of the choice of $\Sigma$.
\end{lemma}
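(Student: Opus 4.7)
The plan is to show that for any two choices $\Sigma, \Sigma'$ satisfying Hypothesis \ref{sigma-hypothesis}, the modules $H^2_{\Sigma, \Iw}(\cO_{L,S}, T)$ and $H^2_{\Sigma', \Iw}(\cO_{L,S}, T)$ differ by $\Lambda$-modules that are both $\Lambda$-torsion and have trivial $\mu$-invariant, so that each property descends from one side to the other by the additivity of these invariants in short exact sequences. By replacing the pair with $\Sigma \cup \Sigma'$ (which still satisfies the hypothesis) and proceeding one place at a time, I may reduce to the case $\Sigma' = \Sigma \sqcup \{v\}$ for a single additional place $v$ with $H^0(K_v, T) = 0$.

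In this situation, the description of $\Sigma$-modified \'etale cohomology as a cone, combined with the octahedral axiom, produces a distinguished triangle
\[
\text{R}\Gamma_{\Sigma'}(\cO_{L_n, S}, T) \longrightarrow \text{R}\Gamma_{\Sigma}(\cO_{L_n, S}, T) \longrightarrow \bigoplus_{w \mid v \text{ in } L_n} \text{R}\Gamma_\et(\kappa_w, T)
\]
on each finite level. Setting $\text{R}\Gamma_\Iw(\kappa_v, T) := R\varprojlim_n \bigoplus_{w \mid v}\text{R}\Gamma_\et(\kappa_w, T)$ and observing that Mittag-Leffler considerations allow one to pass $R\varprojlim_n$ through the triangle (the terms on the right being compact as subquotients of bounded-rank $\cR$-modules), the resulting long exact sequence in $\bLambda$-module cohomology contains
\[
H^1_\Iw(\kappa_v, T) \longrightarrow H^2_{\Sigma', \Iw}(\cO_{L, S}, T) \longrightarrow H^2_{\Sigma, \Iw}(\cO_{L, S}, T) \longrightarrow 0,
\]
the trailing zero reflecting cohomological dimension one of finite fields.

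What remains is to verify that $H^1_\Iw(\kappa_v, T)$ is $\Lambda$-torsion with $\mu = 0$. Here the standing assumption that $v$ does not split completely in $L_\infty | L$ plays its part: it ensures that the decomposition group $\Gamma_v \subseteq \Gamma$ is an open subgroup, so that $\Lambda$ is free of finite rank over $\Lambda_v := \cR \llbracket \Gamma_v \rrbracket$. A Shapiro-type argument then identifies $\text{R}\Gamma_\Iw(\kappa_v, T)$ with a two-term complex of the form $[\Lambda_v \otimes_{\cR} T \xrightarrow{1 - \Frob_v \gamma_v^{-1}} \Lambda_v \otimes_{\cR} T]$, where $\gamma_v$ denotes a topological generator of $\Gamma_v$; the cohomology in degree one is then annihilated by $\det_{\cR}(1 - \Frob_v \gamma_v^{-1} \mid T) \in \Lambda_v$, which factors as a product of terms of the form $\alpha - \gamma_v^{-1}$ for eigenvalues $\alpha$ of $\Frob_v$, each of which is, up to a unit, a distinguished polynomial in $\Lambda_v$. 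Consequently $H^1_\Iw(\kappa_v, T)$ is $\Lambda_v$-torsion of vanishing $\mu$-invariant, and the same properties persist under the finite base extension $\Lambda_v \hookrightarrow \Lambda$.

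The main obstacle is the Shapiro-type identification of the local Iwasawa complex and the careful tracking of the Frobenius operator through it; the hypothesis $H^0(K_v, T) = 0$ ensures that $\Frob_v$ has no fixed vector on $T$, which is essential both for the above determinant to be a non-zero element and for the identification to proceed without extraneous torsion appearing.
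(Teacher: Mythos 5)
Your argument is correct and follows the same overall contour as the paper's: compare the $\Sigma$-modified cohomology to a smaller (or empty) $\Sigma$ via the defining cone, observe that the discrepancy is controlled by the residue-field cohomology at places of $\Sigma$, and show this discrepancy module is $\Lambda$-torsion with vanishing $\mu$-invariant so that both properties in Conjecture \ref{mu-vanishing-conjecture} transfer. Passing through $\Sigma \cup \Sigma'$ one prime at a time is a cosmetic variant of the paper's direct comparison of an arbitrary $\Sigma$ against $\Sigma = \varnothing$, and the octahedral axiom gives the triangle you claim.

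Where you genuinely diverge is in establishing that $H^1_{\Iw}(\kappa_v, T)$ is torsion with $\mu = 0$. You do this by a Shapiro-type identification with a two-term complex over $\Lambda_v = \cR\llbracket\Gamma_v\rrbracket$ and a factorisation of $\det_{\cR}(1 - \Frob_v\gamma_v^{-1}\mid T)$ into factors $1 - \alpha_i\gamma_v^{-1}$; since the $\alpha_i$ are $p$-adic units (as eigenvalues of $\Frob_v \in \mathrm{GL}_{\cR}(T)$) and each $\alpha_i \neq 1$ by the hypothesis $H^0(K_v,T) = 0$, each factor is, after Weierstrass preparation, either a unit or a unit times a distinguished polynomial, whence $\mu = 0$. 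The paper argues more elementarily: it simply notes that on each finite level $\bigoplus_{w\mid v} H^1(\kappa_w,T)$ is finite with its minimal number of $\cR$-generators bounded by $\rank_{\cR}(\bigoplus_{w\mid v} T)$, and that since only finitely many places of $L_\infty$ lie above $v$ the inverse limit is a compact $\cR$-module with a bounded number of topological generators, hence finitely generated over $\ZZ_p$, hence of vanishing $\mu$-invariant. Your determinant computation is more explicit and also shows that the $\lambda$-invariant of $H^1_{\Iw}(\kappa_v, T)$ is bounded by $\rank_{\cR}(T)\cdot[\Gamma:\Gamma_v]$, which the paper's generator-counting argument does not directly provide; the paper's route, on the other hand, avoids the bookkeeping of decomposition groups and the Shapiro identification, which, while standard, requires some care to state precisely equivariantly (e.g.\ how the $\Lambda$-module structure relates to the $\Lambda_v$-module structure via the induction functor). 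Both are valid, and either suffices for the lemma.

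One small remark on phrasing: $R\varprojlim_n$ always carries distinguished triangles to distinguished triangles; the Mittag-Leffler/compactness observation is not needed for that step, but rather to identify the cohomology of $R\varprojlim_n$ with the ordinary inverse limit of cohomologies (i.e.\ to kill $\varprojlim^{(1)}$), which is what you actually use when extracting the exact sequence. You correctly invoke it for the right reason, but the sentence slightly misattributes its role.
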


\begin{proof}
    By the definition of $\Sigma$-modified cohomology one has, for each $n \in \NN_0$, an exact sequence
    \begin{cdiagram}
        \displaystyle \bigoplus_{w \in \Sigma_{L_n}} H^1 (\kappa_w, T) \arrow{r} & H^2_\Sigma(\cO_{L_n, S}, T) \arrow{r} &  H^2(\cO_{L_n, S}, T) \arrow{r} &
        0
    \end{cdiagram}
    Fix a place $v \in \Sigma$ and recall that the complex $\bigoplus_{w \mid v} \text{R}\Gamma_\et (\kappa_w,T)$ is represented in $D(\cR[\cG_n])$ by the complex $\bigoplus_{w \mid v}\; [T \xrightarrow{1-\Frob_w^{-1}} T]$.
    In particular, Hypothesis \ref{sigma-hypothesis} implies that $\bigoplus_{w \mid v} H^1 (\kappa_w, T)$ is finite and its minimal number of $\cR$-generators is bounded above by the $\cR$-rank of $\oplus_{w\mid v} T$.\\
    Since $L_\infty | L$ is the cyclotomic $\ZZ_p$-extension, there are only finitely many primes of $L_\infty$ lying above $v$. We may thus pass to the limit to deduce that $\bigoplus_{w \in \{v\}_{L_\infty}}\varprojlim_n H^1 (\kappa_w,T)$ is finitely generated as a $\Z_p$-module, hence has vanishing $\mu$-invariant.
    
    This fact now combines with the exact sequence above to imply that $H^2_{\Sigma, \Iw}(\cO_{L, S}, T)$ is $\Lambda$-torsion and, moreover, has vanishing $\mu$-invariant if and only if the same is true of $H^2_{\Iw}(\cO_{L, S}, T)$.
\end{proof}

\begin{examples}\label{mu-vanishing-examples}Lemma \ref{mu-vanishing-conjecture} shows that, under the current hypotheses, the question of whether $H^2_{\Sigma, \Iw}(\cO_{L,S}, T)$ is a torsion $\Lambda$-module is equivalent to the weak Leopoldt conjecture for the pair $(T^*(1),L_\infty)$. As for the $\mu$-invariant component of Conjecture \ref{mu-vanishing-conjecture} we make the following observations:
    \begin{liste}
        \item{When $K$ is totally real and $T = \ZZ_p(1)$, then Hypothesis \ref{sigma-hypothesis} is satisfied for any choice of $\Sigma$. The exact sequence
        \begin{cdiagram}
            0 \arrow{r} & A_{S, \Sigma}^\infty \arrow{r} &
            H^2_{\Sigma, \Iw} (\cO_{L,S},T) \arrow{r} &
            X_{S}^\infty \arrow{r} & 0
        \end{cdiagram}
        combines with Lemma \ref{mu-vanishing-independent-lemma} to imply that Conjecture \ref{mu-vanishing-conjecture} is equivalent to Iwasawa's famous conjecture on the vanishing of the $\mu$-invariant of $A^\infty$. In particular, the theorem of Ferrero-Washington implies that Conjecture \ref{mu-vanishing-conjecture} is valid whenever $L | \QQ$ is an abelian extension.
        }
        \item{Let $T = T_p(E)$ be the $p$-adic Tate module of an elliptic curve over $K$, and take $\Sigma$ to be a set of primes of $K$ disjoint from $S$. Since $E$ has good reduction at every prime in $\Sigma$ it follows that $\Sigma$ satisfies Hypothesis \ref{sigma-hypothesis}.\\
        As such, Conjecture \ref{mu-vanishing-conjecture} is equivalent to the Coates-Sujatha conjecture \cite[Conj. A]{coates-sujatha} after taking into consideration Lemma 3.2 of \textit{loc.\@ cit.}
        In particular, if $K = \QQ$ and $L | \QQ$ is any finite abelian extension such that $E(L)[p^\infty] \neq 0$ then Conjecture \ref{mu-vanishing-conjecture} is valid by \cite[Cor.\@ 3.6]{coates-sujatha}.}
        
        \item{More generally, Lim has conjectured in \cite[Conj.\@ A]{lim} that the $\mu$-invariant of $H^2_\Iw(\cO_{L,S}, T)$ vanishes and Conjecture \ref{mu-vanishing-conjecture} is equivalent to his conjecture after taking into consideration Lemma 3.4 of \textit{loc.\@ cit.}}
    \end{liste}
\end{examples}

We can now formulate the main result of this section.

\begin{proposition}\label{mu-vanishing-result}
    Suppose that $\Sigma$ is chosen to satisfy Hypothesis \ref{sigma-hypothesis} and so that the triple $(T,L_\infty, \Sigma)$ satisfies Hypothesis \ref{main-hypothesis}. Then Conjecture \ref{mu-vanishing-conjecture} is valid if and only if $\NS^{r_T}/\NS^b$ is finitely generated as an $\cR$-module.
\end{proposition}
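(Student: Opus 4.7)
The plan is to deduce the equivalence directly from the injective pseudo-isomorphism provided by Theorem \ref{pairing-theorem}(c), namely
\[
\NS^{r_T}/\NS^b \;\approx\; \bigl(\bLambda/\Fitt^0_\bLambda(H^2_{\Sigma,\Iw}(\cO_{L,S}, T))\bigr)^\circ,
\]
by transferring both the torsion property and the vanishing of the $\mu$-invariant across this comparison.

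First I would note that, under our running hypotheses (specifically \ref{main-hypothesis}(4)), the module $H^2_{\Sigma,\Iw}(\cO_{L,S}, T)$ is already $\Lambda$-torsion, so the content of Conjecture \ref{mu-vanishing-conjecture} is exactly the vanishing of its $\mu$-invariant. Similarly, since $\NS^{r_T}$ and $\NS^b$ both have $\Lambda$-rank $[L:K]$ by Theorem \ref{UN-structure-theorem}(a) and Proposition \ref{basic-theorem}, the quotient $\NS^{r_T}/\NS^b$ is automatically $\Lambda$-torsion. The problem therefore reduces to showing that $\mu_\Lambda(\NS^{r_T}/\NS^b) = 0$ if and only if $\mu_\Lambda(H^2_{\Sigma,\Iw}(\cO_{L,S}, T)) = 0$, since a finitely generated $\Lambda$-torsion module has vanishing $\mu$-invariant if and only if it is finitely generated over $\cR$ (by the structure theorem, since $\cR$ is a discrete valuation ring and $\Lambda = \cR\llbracket\Gamma\rrbracket$).

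Second, I would use the pseudo-isomorphism of Theorem \ref{pairing-theorem}(c) to reduce this in turn to showing
\[
\mu_\Lambda\bigl(\bLambda/\Fitt^0_\bLambda(H^2_{\Sigma,\Iw}(\cO_{L,S}, T))\bigr) \;=\; 0 \quad\Longleftrightarrow\quad \mu_\Lambda(H^2_{\Sigma,\Iw}(\cO_{L,S}, T)) \;=\; 0,
\]
noting that pseudo-isomorphisms preserve $\mu$-invariants and that the involution $(-)^\circ$ leaves both the $\Lambda$-rank and the $\mu$-invariant unchanged (it only inverts the $\Gamma$-action).

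For the equivalence of $\mu$-invariants at the level of the Fitting quotient, I would argue locally at height-one primes of $\bLambda$, invoking Lemma \ref{IwasawaInvariants} to handle the singular primes (where the $\mu$-invariant is detected) and noting that at regular height-one primes $\fp$ the localisation $\bLambda_\fp$ is a discrete valuation ring, so $\Fitt^0_{\bLambda_\fp}(M_\fp)$ agrees with the characteristic ideal and
\[
\mathrm{length}_{\bLambda_\fp}(M_\fp) \;=\; \mathrm{length}_{\bLambda_\fp}\bigl(\bLambda_\fp/\Fitt^0_{\bLambda_\fp}(M_\fp)\bigr).
\]
Since $\mu_\Lambda$ of a $\Lambda$-torsion module is precisely the sum of lengths at the singular height-one primes of $\bLambda$ lying over $(p) \subseteq \Lambda$, the two sides vanish simultaneously.

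The main obstacle is the last step, that is, verifying cleanly that the $\mu$-invariant of a $\Lambda$-torsion $\bLambda$-module $M$ coincides with the $\mu$-invariant of $\bLambda/\Fitt^0_\bLambda(M)$ in the possibly non-regular equivariant setting. This requires a careful local analysis at singular primes using Lemma \ref{IwasawaInvariants}(a) to ensure that $M_\fp = 0$ there exactly when the $\mu$-invariant vanishes, together with the observation that the Fitting ideal localises well. Once this is settled, assembling the implications in both directions is routine.
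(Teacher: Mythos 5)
Your proposal is correct and reaches the conclusion via a closely related but mildly different route through the same apparatus. The paper's proof localises the isomorphism of Theorem \ref{pairing-theorem}\,(a) at singular height-one primes and then cites \cite[Prop.\@ 5.5.13]{NSW} for the fact that $\NS^{r_T}/\NS^b$ and $\Ext^1_\bLambda(\NS^{r_T}/\NS^b, \bLambda)$ have the same $\mu$-invariant. You instead appeal directly to the pseudo-isomorphism of Theorem \ref{pairing-theorem}\,(c), which is arguably cleaner: pseudo-isomorphisms manifestly preserve $\mu$-invariants, the twist $(-)^\circ$ is irrelevant for a $\Lambda$-invariant such as $\mu$, and you thereby bypass the reflexive hull $\Fitt^0_\bLambda(\cdots)^{\ast\ast}$ appearing in part (a) altogether. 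Both routes ultimately reduce to the same local analysis at singular primes via Lemma \ref{IwasawaInvariants}, so there is no real saving, but your version is a legitimate alternative. (Your re-derivation of the $\Lambda$-torsionness of $\NS^{r_T}/\NS^b$ is redundant — it is already asserted in Proposition \ref{basic-theorem}.)

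The step you flag as the main obstacle, namely the equivalence
$\mu_\Lambda\bigl(\bLambda/\Fitt^0_\bLambda(M)\bigr) = 0$ if and only if $\mu_\Lambda(M) = 0$
for a finitely generated torsion $\bLambda$-module $M$, is not actually a gap and requires no length computation. By Lemma \ref{IwasawaInvariants}\,(a), each side vanishes precisely when the corresponding localisation at every singular prime $\p$ is zero. Since Fitting ideals commute with localisation, and since $\Fitt^0_{\bLambda_\p}(M_\p) = \bLambda_\p$ holds if and only if $M_\p = 0$ (because $\Fitt^0 \subseteq \Ann$, or by Nakayama over the local ring $\bLambda_\p$), the two conditions coincide prime by prime. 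Your discussion of regular height-one primes, characteristic ideals and length-matching is a red herring for this particular step: $\mu$-invariants are detected only at singular primes, and the regular-prime bookkeeping would be relevant only if you were additionally tracking $\lambda$-invariants.
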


\begin{proof}
    At the outset we first note that if $M$ is a finitely generated torsion $\bLambda$-module, then its $\mu$-invariant vanishes if and only if $M_\p = 0$ for every singular prime $\p$ of $\bLambda$ by Lemma \ref{IwasawaInvariants}.
    To prove the Proposition we now fix such a prime $\p$ of $\bLambda$. By localising the isomorphism of Theorem \ref{pairing-theorem} at $\p$ we obtain an isomorphism
    \begin{align*}
        \Ext_{\bLambda}^1\left(\faktor{\NS^{r_T}}{\NS^b}, \bLambda\right)_\fp \cong \faktor{\bLambda_\p}{\Fitt_{\bLambda_\p}(H^2_{\Sigma, \Iw}(\cO_{L,S}, T)_\p)}.
    \end{align*}
    On the other hand, one knows by \cite[Prop.\@ 5.5.13]{NSW} that $\faktor{\NS^{r_T}}{\NS^b}$ and $\Ext^1_\bLambda\left(\faktor{\NS^{r_T}}{\NS^b}, \bLambda\right)$ have the same $\mu$-invariant. 
    As such, one deduces from the above discussion that the $\mu$-invariant of $H^2_{\Sigma, \Iw}(\cO_{L,S}, T)$ vanishes if and only if the same is true of that of $\faktor{\NS^{r_T}}{\NS^b}$.
\end{proof}

Using this Proposition we can now appeal to the known validity of the Coates-Sujatha conjecture for particular elliptic curves over $\QQ$ to say something about the structure of the quotient module $\NS^{r_T}/\NS^b$.

\begin{corollary}\label{mu-vanishing-example}
    Let $K= \QQ$ and $E/\QQ$ be an elliptic curve. If $(T,L_\infty, \Sigma)$ satisfies Hypothesis \ref{main-hypothesis} and $E(L)[p^\infty] \neq 0$, then $\NS^1/\NS^b$ is a free $\ZZ_p$-module of finite rank.
\end{corollary}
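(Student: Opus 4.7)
The plan is to combine Proposition \ref{mu-vanishing-result} with the known validity of the Coates--Sujatha conjecture in this setting, and then to upgrade \emph{finitely generated} to \emph{free} over $\ZZ_p$ via a standard structural argument.

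First I would identify the basic rank. Since $K = \QQ$ has a single archimedean place and the Weil pairing identifies $T_p(E)^*(1)$ with $T_p(E)$, the module $Y_K(T) = H^0(\RR, T_p(E))$ is the $(+1)$-eigenspace of complex conjugation on $T_p(E)$ and hence has $\ZZ_p$-rank one; that is, $r_T = 1$, so $\NS^{r_T}/\NS^b = \NS^1/\NS^b$. Next, since $L \vert \QQ$ is by assumption a finite abelian extension and $E(L)[p^\infty] \neq 0$, Examples \ref{mu-vanishing-examples}(b) together with \cite[Cor.\@ 3.6]{coates-sujatha} yield the validity of Conjecture \ref{mu-vanishing-conjecture} for the triple $(T_p(E), L_\infty, \Sigma)$. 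Proposition \ref{mu-vanishing-result} then implies that $\NS^1/\NS^b$ is finitely generated over $\cR = \ZZ_p$.

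It remains to rule out any $\ZZ_p$-torsion. By Theorem \ref{UN-structure-theorem}(a) and Proposition \ref{basic-theorem}, both $\NS^{r_T}$ and $\NS^b$ are $\Lambda$-free, so the short exact sequence
\[
0 \longrightarrow \NS^b \longrightarrow \NS^{r_T} \longrightarrow \NS^{r_T}/\NS^b \longrightarrow 0
\]
exhibits $\NS^1/\NS^b$ as a $\Lambda$-module of projective dimension at most one. As recorded, for example, in \cite[Prop.\@ 5.3.19]{NSW}, such a module possesses no non-zero finite $\Lambda$-submodule. On the other hand, the $\ZZ_p$-torsion submodule of any finitely generated $\ZZ_p$-module is finite, and is automatically stable under the action of $\Gamma$, hence constitutes a finite $\Lambda$-submodule of $\NS^1/\NS^b$. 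This submodule must therefore vanish, so $\NS^1/\NS^b$ is $\ZZ_p$-torsion-free and finitely generated, i.e.\@ free of finite rank over $\ZZ_p$.

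The main conceptual step is the application of Proposition \ref{mu-vanishing-result}; the torsion-freeness is then a formal consequence of the homological properties of $\NS^{r_T}/\NS^b$ established earlier in the paper, so no genuine obstacle remains.
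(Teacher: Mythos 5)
Your proof is correct and follows essentially the same route as the paper's: apply Proposition \ref{mu-vanishing-result} via Example \ref{mu-vanishing-examples}(b) to obtain finite generation over $\ZZ_p$, then invoke the projective dimension one argument to rule out finite $\Lambda$-submodules and hence $\ZZ_p$-torsion. The explicit verification that $r_T = 1$ via the Weil pairing is a helpful addition but otherwise the argument matches.
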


\begin{proof}
    By combining Proposition \ref{mu-vanishing-result} with the observation of Example \ref{mu-vanishing-examples}\,(b) one deduces that $\NS^1/\NS^b$ is finitely generated as a $\ZZ_p$-module. Moreover, $\NS^1/\NS^b$ has projective dimension one as a $\Lambda$-module and so has no non-zero finite $\Lambda$-submodules. Since the maximal finite $\Lambda$-submodule of $\NS^1/\NS^b$ necessarily coincides with its maximal finite $\ZZ_p$-submodule it then follows that $\NS^1/\NS^b$ is $\ZZ_p$-free as claimed.
\end{proof}

\begin{remark}
    Let $K = L = \QQ$ and let $E/\QQ$ be an elliptic curve of algebraic rank 0 with finite Tate-Shaferevich group and such that $E(\QQ) \neq 0$. Suppose that $\Sigma$ is chosen so that $(T, \QQ_\infty, \Sigma)$ satisfies Hypothesis \ref{main-hypothesis}. If $p > 7$ does not divide any of the Tamagawa numbers at primes of bad reduction for $E$ nor the order of the Tate-Shaferevich group of $E$, then Wuthrich has shown in \cite[Prop.\@ 9.1]{wuthrich} that the fine Selmer group of $E$ over $\QQ_\infty$ is trivial.
    If we assume, in addition, that for every $v \in S\setminus S_\infty(\QQ)$ the group $E(\QQ_v)$ has no points of order $p$, then a straightforward argument using the Weil pairing implies that the dual of the fine Selmer group of $E$ over $\QQ_\infty$ coincides with $H^2_\Iw(\cO_{\Q,S}, T)$.
    Given this, the proof of Proposition \ref{mu-vanishing-result} implies that, under all the above assumptions, $\NS^1(T,\QQ_\infty) = \NS^b(T,\QQ_\infty)$.
\end{remark}

\markboth{Appendix}{Finite places splitting in $L_\infty$}
\section*{Appendix}
\addcontentsline{toc}{section}{Appendix}

\renewcommand{\thesubsection}{\Alph{subsection}}
\renewcommand{\thethm}{(\Alph{subsection}.\arabic{thm})}
\setcounter{subsection}{0}
\setcounter{thm}{0}

\subsection{Finite places splitting in $L_\infty$}

Key to the approach of the main body of the present article is the assumption that no finite place of $K$ splits completely in $L_\infty$. While this is not an incredibly strict assumption (it is always satisfied for the cyclotomic $\ZZ_p$-extension of $L$, for example), it is natural to ask whether one can weaken this hypothesis to include many other interesting situations that arise in arithmetic. 
For example, if $K = L$ is an imaginary quadratic field and $K_\infty$ is not the anti-cyclotomic $\ZZ_p$-extension of $K$, then only finitely many finite places of $K$ can split in $K_\infty$ (see, for example, \cite{emsalem}). 
In particular, to obtain a module of higher rank universal norms in this situation which incorporates all arithmetic data of interest one must modify the notion of basic rank given above to take into account these new non-archimedean places.\medskip \\
In this appendix we briefly outline how one can can do this for general $p$-adic representations. Since this exposition will contain essentially no additional ideas we prefer to prove as little as we feel is necessary and refer to existing arguments to justify the claims.\medskip \\
At the outset we adopt all notations in \S\,\ref{set-up-section} with the exception that $L_\infty | L$ is now a $\ZZ_p$-extension in which finitely many places (infinite or finite) of $K$ are allowed to split.

\tocless\subsubsection{$\Sigma$-modified Selmer complexes}

In this subsection we introduce a $\Sigma$-modified Selmer complex which shall prove useful throughout this appendix.\medskip \\
Denote by $S_f$ the subset of $S$ comprising the finite places. Let $V$ be a finite set of finite places of $K$, disjoint from $\Sigma$. For any abelian extension $F$ of $K$ we define the \textit{$\Sigma$-modified $V$-supported} complex $\text{R}\Gamma_{\Sigma,V}(\cO_{F,S}, T)$ to be the mapping fibre in $D(\cR[\gal{F}{K}])$ of the morphism
\begin{cdiagram}
    \text{R}\Gamma(\cO_{F,S},T) \oplus \bigoplus_{w \in (S_f\setminus V)_F} \text{R}\Gamma(F_w, T) \arrow{r}{\phi} & \bigoplus_{w \in S_{f,F}} \text{R}\Gamma(F_w, T) \oplus \bigoplus_{w \in \Sigma_{F}} \text{R}\Gamma(\kappa_w, T),
\end{cdiagram}
where $\phi$ is given by $(\oplus_{w \in S_{f,F}} -\res_w, \oplus_{w \in \Sigma_{F}} -\res_w)$ on the former summand and by $(\iota, 0)$ on the latter summand where $\iota$ is the natural inclusion map. \medskip \\
Given this construction, we then have the following Lemma which results from a straightforward application of the octahedral axiom.

\begin{lemma}
    Suppose that $V \subseteq S$. Then there is an exact triangle
    \begin{equation}\label{selmer-complex-exact-triangle}
    \begin{tikzcd}
        \text{R}\Gamma_{\Sigma,V}(\cO_{F,S}, T) \arrow{r} &  \text{R}\Gamma_{\Sigma}(\cO_{F,S}, T) \arrow{r} &  \bigoplus_{w \in V_F} \text{R}\Gamma(F_w, T)
        \end{tikzcd}
    \end{equation}
    in $D(\cR[\cG_F])$.
\end{lemma}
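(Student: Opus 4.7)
The plan is to deduce the desired exact triangle by unpacking the mapping fibre defining $\text{R}\Gamma_{\Sigma,V}(\cO_{F,S}, T)$ and then applying a formal consequence of the octahedral axiom. The key observation is that in the map $\phi$ defining the Selmer complex, the summand $A_2 := \bigoplus_{w \in (S_f \setminus V)_F} \text{R}\Gamma(F_w, T)$ of the source maps isomorphically onto the corresponding summand $B_1 := \bigoplus_{w \in (S_f \setminus V)_F} \text{R}\Gamma(F_w, T)$ of the target via the natural inclusion $\iota$; hence this pair of summands ought to make no net contribution to the mapping fibre.

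To make this precise, I would first express $\phi$ as a direct sum in the arrow category of $D(\cR[\cG_F])$ of the identity on $A_2$ (which contributes the acyclic two-term complex $[A_2 \xrightarrow{\text{id}} A_2]$ to the fibre) and a residual morphism
\[
\phi' \colon \text{R}\Gamma(\cO_{F,S}, T) \longrightarrow \bigoplus_{w \in V_F} \text{R}\Gamma(F_w, T) \;\oplus\; \bigoplus_{w \in \Sigma_F} \text{R}\Gamma(\kappa_w, T)
\]
whose components are, respectively, the direct sum of the restriction maps $-\res_w$ for $w \in V_F$ and the composition of restriction with reduction to the residue fields at $\Sigma_F$. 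This produces a canonical quasi-isomorphism $\text{R}\Gamma_{\Sigma,V}(\cO_{F,S}, T) \simeq \text{fib}(\phi')$ in $D(\cR[\cG_F])$.

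Having reduced to the fibre of $\phi'$, I would invoke the standard iterated-fibre consequence of the octahedral axiom: for any morphism of the shape $(f, g) \colon X \to Y_1 \oplus Y_2$, one has a canonical equivalence $\text{fib}((f, g)) \simeq \text{fib}(\text{fib}(f) \xrightarrow{\bar g} Y_2)$, where $\bar g$ is induced by $g$ on the fibre of $f$. Applied with $X = \text{R}\Gamma(\cO_{F,S}, T)$, $Y_1 = \bigoplus_{w \in \Sigma_F} \text{R}\Gamma(\kappa_w, T)$ and $Y_2 = \bigoplus_{w \in V_F} \text{R}\Gamma(F_w, T)$, this identifies $\text{fib}(\phi')$ with $\text{fib}(\text{R}\Gamma_\Sigma(\cO_{F,S}, T) \to \bigoplus_{w \in V_F} \text{R}\Gamma(F_w, T))$, and hence produces the claimed exact triangle.

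The only point requiring care is verifying that the simplification of $\phi$ respects the $\cR[\cG_F]$-equivariant structure and that the morphism $\text{R}\Gamma_\Sigma(\cO_{F,S}, T) \to \bigoplus_{w \in V_F} \text{R}\Gamma(F_w, T)$ produced by the iterated-fibre argument really coincides with the natural restriction morphism. Both compatibilities are functorial, so the argument never requires input beyond the defining triangles of the complexes involved and general triangulated-category formalism, justifying the author's claim that the result is a "straightforward application of the octahedral axiom."
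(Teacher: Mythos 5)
Your proof is correct, and since the paper provides no argument beyond the one-line remark that the result ``results from a straightforward application of the octahedral axiom'', it is the natural elaboration of that claim. The one step you underplay is the assertion that $\phi$ ``expresses as a direct sum'' $\phi' \oplus \mathrm{id}_{A_2}$: after regrouping the target as $B_{1,V} \oplus A_2 \oplus B_2$ with $B_{1,V} := \bigoplus_{w \in V_F} \text{R}\Gamma(F_w,T)$, the matrix of $\phi$ has a non-zero off-diagonal block $-\bigoplus_{w \in (S_f\setminus V)_F} \res_w \colon \text{R}\Gamma(\cO_{F,S},T) \to A_2$, so the split is not evident by inspection; one must first pre-compose with the $\cR[\cG_F]$-linear automorphism of the source sending $(x,a)\mapsto\bigl(x,\,a-\textstyle\bigoplus_w\res_w(x)\bigr)$ to clear that block, after which $\phi$ genuinely becomes $\phi' \oplus \mathrm{id}_{A_2}$. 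Your phrase ``isomorphism in the arrow category'' does license exactly this, but it deserves to be spelled out. An alternative route that bypasses the conjugation entirely is the $3\times 3$ lemma for a ladder of split triangles: the vertical maps $\iota$, $\phi$, $-\res_\Sigma$ between the split rows $A_2 \to Z_1 \to \text{R}\Gamma(\cO_{F,S},T)$ and $B_1 \to Z_2 \to B_2$ commute and hence yield an exact triangle of vertical fibres $\mathrm{fib}(\iota) \to \mathrm{fib}(\phi) \to \mathrm{fib}(-\res_\Sigma)$; since $\mathrm{fib}(\iota) \simeq B_{1,V}[-1]$ and $\mathrm{fib}(-\res_\Sigma) = \text{R}\Gamma_\Sigma(\cO_{F,S},T)$, rotating gives the claimed triangle directly and with the correct boundary map.
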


We now use this Lemma to study the complex $C_n^\bullet$ defined in \S\,\ref{set-up-section}.

\begin{proposition} \label{SplitPrimesProp}
    Fix $n \in \NN_0$. Denote by $V_n$ the subset of $S$ consisting of those finite places that split completely in $L_n$ and write
    \begin{align*}
        Y_{n,K}(T) := \qquad \mathclap{\bigoplus_{v \in S_\infty(K) \cup V_n}} \quad H^0(K_v, T^*(1)).
    \end{align*}
    Then there is an exact sequence
    \begin{cdiagram}
        H^2_{\Sigma,V_n}(\cO_{L_n, S}, T) \arrow{r} & H^1(C^\bullet_n) \arrow{r} & \bigoplus_{v \in S_\infty(K) \cup V_n}  Y_{n,K}(T)^* \otimes_{\cR} \cR[\cG_n] \arrow{r} &  0
    \end{cdiagram}
    in which the first map is canonical and the second depends on a choice of a set of representatives of the orbits of $\Gal(L_n | K)$ on $V_n$.
\end{proposition}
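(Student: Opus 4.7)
The plan is to derive the stated sequence by combining Proposition \ref{FiniteComplex}(b) with the long exact cohomology sequence associated to the exact triangle (\ref{selmer-complex-exact-triangle}), applied to $F = L_n$ and $V = V_n$. The hypothesis that every $v \in V_n$ splits completely in $L_n$ enters in order to identify the local cohomology in degree two as follows: for each $v \in V_n$ and each $w \mid v$ one has $(L_n)_w = K_v$, and the $\cG_n$-orbit of $w$ has size $|\cG_n|$. Local Tate duality then provides a canonical isomorphism $H^2((L_n)_w, T) \cong H^0(K_v, T^*(1))^*$ of $\cR$-modules, and after fixing a set of representatives $\{w_v\}_{v \in V_n}$ of the $\cG_n$-orbits on places of $L_n$ above $V_n$, these combine to an isomorphism of $\cR[\cG_n]$-modules
\[
\bigoplus_{w \in V_{n,L_n}} H^2((L_n)_w, T) \cong \bigoplus_{v \in V_n} H^0(K_v, T^*(1))^* \otimes_\cR \cR[\cG_n].
\]

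Next I would fix a (non-canonical) splitting of the short exact sequence of Proposition \ref{FiniteComplex}(b), giving a decomposition $H^1(C^\bullet_n) \cong H^2_\Sigma(\cO_{L_n, S}, T) \oplus Y_K(T)^* \otimes_\cR \cR[\cG_n]$. The desired map $H^1(C_n^\bullet) \to Y_{n,K}(T)^* \otimes_\cR \cR[\cG_n]$ is then defined by its two components: on the $Y_K(T)^*$-summand it is the projection coming from Proposition \ref{FiniteComplex}(b), and on the $\bigoplus_{v \in V_n} H^0(K_v, T^*(1))^* \otimes_\cR \cR[\cG_n]$-summand it is the composition of the chosen projection onto $H^2_\Sigma$ with the connecting map $H^2_\Sigma \to \bigoplus_{w \in V_{n,L_n}} H^2((L_n)_w, T)$ from the long exact sequence of (\ref{selmer-complex-exact-triangle}), followed by the local-duality isomorphism above. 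A direct diagram chase shows that the kernel of this combined map coincides with the image of $H^2_{\Sigma, V_n}(\cO_{L_n, S}, T)$ under the canonical composition $H^2_{\Sigma, V_n} \to H^2_\Sigma \hookrightarrow H^1(C_n^\bullet)$.

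The main obstacle will be to verify the asserted surjectivity. Surjection onto the $Y_K(T)^*$-component is immediate from Proposition \ref{FiniteComplex}(b), so the substantive point is the surjectivity of the connecting map $H^2_\Sigma \to \bigoplus_{w \in V_{n,L_n}} H^2((L_n)_w, T)$. By the long exact sequence of (\ref{selmer-complex-exact-triangle}) and the vanishing of $H^3_\Sigma(\cO_{L_n, S}, T)$ for $p$ odd, this reduces to showing $H^3_{\Sigma, V_n}(\cO_{L_n, S}, T) = 0$, which I would establish via Artin--Verdier duality together with the nine-term Poitou--Tate exact sequence. As indicated in the preamble to the appendix, a full justification of this final step should follow existing arguments in the literature.
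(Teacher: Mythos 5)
Your argument follows the paper's proof in all essentials: both combine Proposition \ref{FiniteComplex}(b), the long exact sequence of the triangle (\ref{selmer-complex-exact-triangle}), local Tate duality and the split-completeness of the places in $V_n$ to assemble the stated sequence, and both defer the surjectivity of $H^2_\Sigma(\cO_{L_n,S},T)\to\bigoplus_{w}H^2((L_n)_w,T)$ to standard global duality. You are somewhat more explicit than the paper in isolating the vanishing of $H^3_{\Sigma,V_n}(\cO_{L_n,S},T)$ as the precise crux of that surjectivity (the paper simply asserts the exact sequence terminates in zero); note only the small slip that the map $H^2_\Sigma\to\bigoplus_w H^2$ is the restriction map induced by the second arrow of the triangle, not the connecting homomorphism.
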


\begin{proof}
    From Proposition \ref{FiniteComplex} there is a decomposition
    \begin{align*}
        H^1(C_n^\bullet) \cong H^2_{\Sigma}(\cO_{L_n, S}, T) \oplus \bigoplus_{v \in S_\infty(K)} H^0(K_v, T^\ast(1))^\ast.
    \end{align*}
    On the other hand, Tate local duality combines with the fact that $p$ is odd and the exact triangle (\ref{selmer-complex-exact-triangle}) to imply the existence of an exact sequence
    \begin{cdiagram}
        H^2_{\Sigma, V_n}(\cO_{L_n, S}, T) \arrow{r} &  H^2_{\Sigma}(\cO_{L_n, S}, T) \arrow{r} & \bigoplus_{v \in V_n} H^0(K_v, T^\ast(1))^\ast \otimes_\cR \cR[\cG_n] \arrow{r} &  0
    \end{cdiagram}
    in which the middle arrow depends on a choice of a set of representatives of the orbits of $\Gal(L_n/K)$ on $V_n$. In particular, if we write $K_n$ for the kernel of the left hand map then, since every prime in $V_n$ splits completely in $L_n$, we obtain a decomposition
    \begin{align*}
        H^1(C_n^\bullet) \cong K_n \oplus (Y_{n,K}(T)^* \otimes_{\cR} \cR[\cG_n])
    \end{align*}
    which completes the proof of the claim. 
\end{proof}

\tocless\subsubsection{Universal norms}

We write $S_\spc(L_\infty)$ for the set of places of $K$ that split completely in $L_\infty$ and remark that, by our assumptions and the standard properties of $\ZZ_p$-extensions, $S_\spc(L_\infty)$ contains all the archimedean places of $K$. We then fix a finite set of places $S$ containing
\begin{align*}
    S_\spc(L_\infty) \cup S_p(K) \cup S_\ram(T) \cup S_\ram(L/K)
\end{align*}
and we suppose that the tuple $(T,L_\infty, \Sigma)$ satisfies the following mild hypotheses: \medskip

\begin{hypotheses}\label{main-hypothesis-appendix}\text{}
    \begin{enumerate}[label=(\arabic*)]
        \item{For every $n \in \NN_0$ one has that the module of invariants $H^0_\Sigma(L_n, T)$ vanishes.}
        \item{The $\cR$-free module $Y_K(T) = \bigoplus_{v \in S_\spc(L_\infty)} H^0(K_v,T^*(1))$ has non-zero rank $r_T$.}
        \item{$H^1_\Sigma(\cO_{L_n, S}, T)$ is $\cR$-torsion-free for every $n \in \N_0$}.
    \end{enumerate}
\end{hypotheses}

For every subextension $F$ of $L_\infty | K$ we write
\begin{align*}
    H^2_{\Sigma,\spc}(\cO_{F, S}, T) := H^2(R\Gamma_{\Sigma, S_\spc(L_\infty)}(\cO_{F,S}, T)).
\end{align*}

Now fix $n \in \NN_0$. By considering the sum of primitive idempotents in $\cQ[\cG_n]$ that annihilate the kernel $K_n := \ker(H^2_{\Sigma,\spc}(\cO_{L_n,S}, T) \to H^1(C_n^\bullet))$ of the map of Proposition \ref{SplitPrimesProp} one can construct, as in \S\ref{finite-level-section}, a projection map
\begin{align*}
    \Theta_{L_n} \: \Det_{\cR[\cG_n]}(C_{L_n}^\bullet) \to \bidual_{\cR[\cG_n]}^{r_T} H^1_\Sigma(\cO_{L_n,S},T).
\end{align*}

In particular, if there exists $n \in \NN$ for which the projection map $\Theta_{L_n}$ is non-zero, then the argument of \cite[Prop.\@ 4.30\,(i)]{EulerSystemsSagaIII} can be used to show that $\varprojlim_n K_n$ is a torsion $\bLambda$-module.\medskip \\
Given this, it is straightforward to check that the arguments used to prove Theorem \ref{UN-structure-theorem} and Theorem \ref{pairing-theorem} give analogous results under the running hypotheses of this appendix.

\subsection{Equivariant Iwasawa algebras}
\label{AppendixIwasawa}

This appendix serves the purpose of collecting useful facts on rings of the form $\bLambda = \cR \llbracket \Gamma \rrbracket [G]$, where 
\begin{itemize}
\item $\cR$ is the ring of integers in a finite extension $\mathcal{Q}$ of $\mathbb{Q}_p$,
\item $\Gamma \cong \Z_p$
\item $G$ is a finite abelian group.
\end{itemize}

We shall also write $\Gamma^n$ for the unique subgroup of $\Gamma$ of order $p^n$ and $\Gamma_n$ for the quotient $\faktor{\Gamma}{\Gamma^n}$. Moreover, we set $\Lambda = \cR \llbracket \Gamma \rrbracket$. 

\tocless\subsubsection{Basic properties}

Let $P$ be the $p$-Sylow subgroup of $G$ and write $H = \faktor{G}{P}$. Then we have a character-part decomposition
\begin{equation}
    \bLambda \cong \bigoplus_{\chi \in \widehat{H} / \sim} \Lambda_\chi \quad \text{ with } \Lambda_\chi = \cR [\im \chi] \llbracket \Gamma \rrbracket [P].\label{equivariant-decomposition}
\end{equation}
Here the relation $\sim$ is defined as $\chi \sim \chi'$ if there is a $\sigma \in G_\mathcal{Q}$ such that $\chi = \sigma \circ \chi'$. \medskip

\begin{lem} \label{LambdaGorenstein}
The ring $\bLambda$ is Gorenstein.
\end{lem}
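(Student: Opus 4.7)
The plan is to exploit the character-part decomposition (\ref{equivariant-decomposition})
\[
\bLambda \cong \bigoplus_{\chi \in \widehat{H}/\sim} \Lambda_\chi, \qquad \Lambda_\chi = \cR[\im \chi]\llbracket \Gamma \rrbracket [P],
\]
where $P$ denotes the Sylow $p$-subgroup of $G$ and $H = G/P$. Since a finite direct product of rings is Gorenstein precisely when each factor is, it suffices to show that each individual $\Lambda_\chi$ is Gorenstein.

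First I would observe that $R_\chi := \cR[\im \chi]\llbracket \Gamma \rrbracket$ is a regular local ring of Krull dimension two. Indeed, $\cR[\im \chi]$ is the ring of integers in a finite extension of $\Q_p$, hence a complete discrete valuation ring, and if $\gamma$ is a topological generator of $\Gamma$ then the substitution $T = \gamma - 1$ identifies $R_\chi$ with the two-dimensional regular local ring $\cR[\im \chi]\llbracket T \rrbracket$. In particular $R_\chi$ is itself Gorenstein.

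Next, since $P$ is a finite abelian $p$-group, I may write $P \cong \prod_{i=1}^{k} \Z / p^{a_i} \Z$ with generators $\tau_1, \dots, \tau_k$. The substitutions $T_i = \tau_i - 1$ then yield a presentation
\[
\Lambda_\chi \cong \faktor{R_\chi \llbracket T_1, \dots, T_k \rrbracket}{\bigl((1+T_i)^{p^{a_i}} - 1 \, : \, 1 \leq i \leq k \bigr)}
\]
realising $\Lambda_\chi$ as a quotient of the regular local ring $R_\chi \llbracket T_1, \dots, T_k \rrbracket$, which has Krull dimension $k+2$. On the other hand, $\Lambda_\chi$ is finitely generated and free over $R_\chi$ of rank $|P|$, so it has Krull dimension equal to $\dim R_\chi = 2$. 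It follows that the $k$ relators $(1+T_i)^{p^{a_i}} - 1$ form a system of parameters in the regular (hence Cohen--Macaulay) local ring $R_\chi \llbracket T_1, \dots, T_k \rrbracket$, and therefore constitute a regular sequence. This exhibits $\Lambda_\chi$ as a complete intersection, which is in particular Gorenstein.

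The only point requiring any care is the dimension count for $\Lambda_\chi$, but this is immediate from the standard fact that a finite free ring extension preserves Krull dimension. No serious technical obstacle arises; the essence of the argument is simply that a group algebra of a finite (abelian $p$-)group over a regular local base is automatically a complete intersection, and hence Gorenstein.
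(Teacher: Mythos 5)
Your proof is correct, but it takes a genuinely different route from the paper's. The paper first observes that the quotient $\Lambda_\chi/(\gamma - 1) \cong \cO[\im\chi][P]$ is a finite commutative group ring, appeals to Curtis--Reiner (10.29) for the fact that such rings are Gorenstein, and then invokes the standard result that if $R/(x)$ is Gorenstein for a nonzerodivisor $x$ then $R$ is Gorenstein; finally it checks that Gorenstein-ness of the summands $\Lambda_\chi$ passes to $\bLambda$ by a localisation argument. You instead realise each $\Lambda_\chi$ directly as a quotient of the regular local ring $R_\chi\llbracket T_1, \dots, T_k\rrbracket$ by the ideal generated by the $k$ relators $(1+T_i)^{p^{a_i}} - 1$, and use the dimension count $\dim \Lambda_\chi = 2 = (k+2) - k$ together with the Cohen--Macaulay property of the ambient ring to conclude that the relators form a regular sequence, so $\Lambda_\chi$ is a local complete intersection and in particular Gorenstein. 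Your approach buys self-containedness (no appeal to Curtis--Reiner) and a strictly stronger conclusion (complete intersection, not merely Gorenstein), at the cost of a slightly more elaborate Weierstrass-type reduction identifying the power-series and polynomial presentations. One small imprecision: with $k$ relators in an ambient ring of dimension $k+2$, they form only \emph{part} of a system of parameters rather than a full one; but since the ambient ring is Cohen--Macaulay, the condition $\dim A/(x_1,\dots,x_k) = \dim A - k$ already forces $x_1, \dots, x_k$ to be a regular sequence, which is all you need, so the conclusion stands.
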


\begin{proofbox}
Let $\gamma$ be a topological generator of $\Gamma$. For every $\chi \in \widehat{H} / \sim$, the quotient $\faktor{\Lambda_\chi}{(\gamma - 1)} \cong \bigO [\im \chi] [P]$ is a finite group ring and is therefore Gorenstein by \cite[\@10.29]{CurtisReiner}. Since $\gamma - 1$ is a non-zero divisor in $\Lambda_\chi$, this implies that $\Lambda_\chi$ is Gorenstein as well. \\
If $\p \subseteq \bLambda$ is any prime ideal, then the localisation $\bLambda_\p$ is given by the sum $\bigoplus_{\chi \in \widehat{H} / \sim} (\Lambda_\chi)_{\p_\chi}$, where $\p_\chi$ denotes the projection of $\p$ onto $\Lambda_\chi$. Each summand $(\Lambda_\chi)_{\p_\chi}$ is already a local ring, so we must have $\bLambda_\p = (\Lambda_\chi)_{\p_\chi}$ for a certain character $\chi$. Since $\Lambda_\chi$ is a Gorenstein ring, this shows that $\bLambda$ is Gorenstein as well. 
\end{proofbox}

We also note that for any commutative ring $R$ and $R$-module $M$, we have a natural isomorphism of $R[G]$-modules
\begin{align}
\Hom_R (M, R) \xrightarrow{\sim} \Hom_{R[G]} (M, R[G])^\#, \quad f \mapsto \Big \{ m \mapsto \sum_{\sigma \in G} \sigma^{-1} f(\sigma \cdot m) \Big \}\label{hom-isomorphism}
\end{align}
In particular, when $R = \Lambda$ we obtain an isomorphism
\begin{equation} \label{Exts}
\Ext^i_\Lambda (M, \Lambda) \cong \Ext^i_\bLambda (M, \bLambda)^\# \quad \text{ for all } i \geq 0. 
\end{equation}

\tocless\subsubsection{Height-one primes} \label{HeightOnePrimesSection}

The aim of this section is to describe the relation between the classical Iwasawa $\lambda$ and $\mu$-invariants of a module $M$ and the localisation of $M$ at height one primes of $\bLambda$. In order to do this, it is useful to make the following distinction among the latter. 
\begin{definition}
A height-one prime $\p$ of $\bLambda$ is called \emph{singular} if $p \in \p$ and \emph{regular} otherwise. 
\end{definition}

If $\p$ is regular, then $\bLambda_\p$ is identified with the localisation of $\bLambda [\tfrac{1}{p}]$ at $\p \bLambda [\tfrac{1}{p}]$. From the decomposition
\[
\bLambda [\tfrac{1}{p}] = \bigoplus_{\chi \in \widehat{G} / \sim} \Lambda_\chi [\tfrac{1}{p}] \qquad
\text{ with } \Lambda_\chi [\tfrac{1}{p}] = \bigO [\im \chi] \llbracket \Gamma \rrbracket [\tfrac{1}{p}]
\]
we deduce via an argument similar to the one used in the proof of Lemma \ref{LambdaGorenstein} that the localisation $(\bLambda [\tfrac{1}{p}])_{\p \bLambda [\frac{1}{p}]}$ agrees with $(\Lambda_\chi [\tfrac{1}{p}])_{\p\Lambda_\chi [\frac{1}{p}] }$ for a certain character $\chi$. 
Note that $\Z_p [\im \chi] \llbracket \Gamma \rrbracket$ is a conventional Iwasawa algebra. In particular, its localisation $\Z_p [\im \chi] \llbracket \Gamma \rrbracket [\tfrac{1}{p}]$ is also a regular local domain. It follows that $\bLambda_\p$ is a discrete valuation ring. 
\\
If $\p$ is such that $\bLambda_\p = (\Lambda_\chi [\tfrac{1}{p}])_{\p\Lambda_\chi [\frac{1}{p}] }$ for the character $\chi \in \widehat{G} / \sim$, then we say the character $\chi$ is \textit{associated} to $\p$. Similarly, if $\p$ is a singular prime and $\bLambda_\p = (\Lambda_\chi)_{\p \Lambda_\chi}$ for some character $\chi \in \widehat{H} /\sim$, then we call $\chi$ \text{associated} to $\p$ as well. It should always be clear from the context what is meant.

\begin{lem} \label{IwasawaInvariants}
Let $M$ be a finitely generated $\bLambda$-torsion module.
\begin{liste}
\item The $\mu$-invariant of $M$ (as a $\Lambda$-module) vanishes if and only if $M_\p = 0$ for every singular prime $\p$ of $\bLambda$.
\item The module $M$ is finite (equivalently, its $\mu$ and $\lambda$-invariant as a $\Lambda$-module both vanish) if and only if $M_\p = 0$ for every height-one prime $\p$ of $\bLambda$. 
\end{liste}
\end{lem}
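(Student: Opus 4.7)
My plan is to reduce both parts to the classical characterisations of the Iwasawa invariants over $\Lambda = \cR\llbracket\Gamma\rrbracket$ by means of the block decomposition (\ref{equivariant-decomposition}). Fix a uniformiser $\pi$ of $\cR$. Two structural observations form the backbone: every $\bLambda$-module splits compatibly with the block structure as $M = \bigoplus_\chi M_\chi$, with each $\Lambda_\chi$ finite free over $\Lambda$; and the singular primes of $\bLambda$ are precisely the height-one primes lying above $(\pi) \subset \Lambda$. The latter identification follows from the fact that $p \in \p$ is equivalent to $\pi \in \p$, together with a standard going-up argument ruling out $\p \cap \Lambda$ being the maximal ideal whenever $\p$ has height one. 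Since $\bLambda$ is finite free over $\Lambda$, one also checks via the norm $\bLambda \to \Lambda$ that $\bLambda$-torsion agrees with $\Lambda$-torsion, so that $\mu_\Lambda(M)$ and $\lambda_\Lambda(M)$ are defined throughout.

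For part (a), I would invoke the well-known characterisation that, for a finitely generated torsion $\Lambda$-module $N$, $\mu_\Lambda(N) = 0$ if and only if $N \otimes_\Lambda \Lambda_{(\pi)} = 0$; this follows from the structure theorem, since every distinguished polynomial becomes a unit in $\Lambda_{(\pi)}$. Setting $S = \Lambda \setminus (\pi)$, the ring $S^{-1}\bLambda$ is Noetherian and semi-local, its maximal ideals being precisely the extensions of the singular primes of $\bLambda$. Thus $S^{-1}M = M \otimes_\Lambda \Lambda_{(\pi)}$ vanishes if and only if $M_\p = 0$ for every singular prime $\p$ of $\bLambda$, giving (a).

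For part (b), I would combine (a) with the analogous criterion that a finitely generated torsion $\Lambda$-module is finite precisely when both its $\mu$- and $\lambda$-invariants vanish, equivalently when it vanishes at every height-one prime of $\Lambda$. Applying the localisation dictionary of (a) simultaneously at $(\pi)$ and at every regular height-one prime of $\Lambda$ then translates this into the vanishing of $M$ at every height-one prime of $\bLambda$. Alternatively, one may argue directly: $\bLambda$ is two-dimensional Noetherian with finite residue fields at every maximal ideal, so a finitely generated torsion $\bLambda$-module is finite as a set precisely when its support consists of maximal ideals, which for a torsion module is equivalent to vanishing at all height-one primes. The step that needs the most care is the prime-matching in the first paragraph; once the singular primes of $\bLambda$ are correctly identified and localisation is seen to commute with the finite product $\bLambda = \prod_\chi \Lambda_\chi$, the rest is a routine chase of localisation functors and the structure theorem.
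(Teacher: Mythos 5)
Your proof is correct, but the route is genuinely different from the one in the paper. The paper derives the ``only if'' direction of (a) by citing a lemma of Flach (also appearing in Burns--Greither), and for the converse uses the character-block decomposition together with the same citation to conclude that each $M_\chi$ is finitely generated over $\cO$; for the converse in (b) it cites Sakamoto's Lemma B.11 to pass from vanishing of $\Ext^1_{\bLambda}$ to vanishing of $\Ext^1_{\Lambda}$ and then invokes \cite[Prop.\@ 5.5.8\,(iv)]{NSW}. Your argument, by contrast, is essentially self-contained: you identify the singular primes of $\bLambda$ with the primes lying over $(\pi) \subset \Lambda$ (using that $\bLambda$ is finite flat over $\Lambda$, so going-down/incomparability pin down heights), note that $M \otimes_\Lambda \Lambda_{(\pi)} = S^{-1}M$ is a module over the semi-local ring $S^{-1}\bLambda$ whose maximal ideals are exactly the singular primes, and reduce directly to the classical criterion $\mu_\Lambda(M)=0 \Leftrightarrow M_{(\pi)}=0$ coming from the structure theorem; part (b) follows by repeating the same dictionary at every height-one prime of $\Lambda$ (or by the dimension-count you sketch). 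What the paper's approach buys is brevity via external references and the block decomposition that is already set up for other purposes in the appendix; what yours buys is independence from those references and a cleaner reduction to classical Iwasawa theory over $\Lambda$ alone. One small point worth keeping in a final write-up: your preliminary claim that $\bLambda$-torsion agrees with $\Lambda$-torsion (via the norm/Cayley--Hamilton) is genuinely needed so that $\mu_\Lambda(M)$ and $\lambda_\Lambda(M)$ are defined, and you are right to flag it explicitly.
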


\begin{proofbox}
The `only if' part of (a) follows from \cite[Lem.\@ 5.6]{Flach} (see also \cite[Lem.\@ 6.3]{BurnsGreither}).\\
As for the converse implication, we write $M_\chi := M \otimes_\bLambda \Lambda_\chi$ for each $\chi \in \widehat{H}/\sim$. Then by tensoring the decomposition (\ref{equivariant-decomposition}) with $M$ we have a natural isomorphism $M \cong \bigoplus_{\chi \in \widehat{H}/\sim} M_\chi$. 
Now, for any character $\chi \in \widehat{H}/\sim$ there exists a singular prime $\p$ of $\bLambda$ such that $\chi$ is associated to $\p$. The hypothesis now implies that the module $M_{\fp} = M_{\chi, \fp}$ vanishes, hence $M_\chi$ is a finitely generated $\cO(\im(\chi))$-module by the result of \textit{loc.\@ cit}. Since $\cO(\im(\chi))$ is finitely generated as an $\cO$ module it then follows that $M$ is finitely generated as an $\cO$-module whence it has vanishing $\mu$-invariant as a $\Lambda$-module.\medskip \\
For part (b) we first note that if the $\lambda$-invariant of $M$ vanishes then $M$ is necessarily $\ZZ_p$-torsion (\textit{c.f.}\@ \cite[Rem.\@ 3 following Def.\@ 5.3.9]{NSW}). It then follows that $M_\p = 0$ for any regular height-one prime $\p$ of $\bLambda$. In combination with (a) this gives one direction of assertion (b). \\
Conversely, assume that $M_\p = 0$ for any height-one prime $\p$ of $\bLambda$. Appealing to \cite[Lemma B.11]{Sakamoto20} one sees that
\[
0 = \Ext^1_\bLambda (M, \bLambda ) \cong \Ext^1_\Lambda (M, \Lambda ). 
\]
This combines with \cite[Prop.\@ 5.5.8\,(iv)]{NSW} to imply that $M$ is finite as desired.
\end{proofbox}

\renewcommand{\emph}[1]{\textit{#1}}

\pagestyle{special}
\vspace{-1.5em}
\addcontentsline{toc}{section}{References}
\tiny
\printbibliography

\small


\textsc{King's College London,
Department of Mathematics,
London WC2R 2LS,
United Kingdom} \\
\textit{Email address:} \href{mailto:dominik.bullach@kcl.ac.uk}{dominik.bullach@kcl.ac.uk}\\

\textsc{King's College London,
Department of Mathematics,
London WC2R 2LS,
United Kingdom}\\
\textit{Email address:} \href{mailto:alexandre.daoud@kcl.ac.uk}{alexandre.daoud@kcl.ac.uk}

\end{document}